\newtheorem{theorem}{Theorem}[section]
\newtheorem{lemma}[theorem]{Lemma}
\newtheorem{proposition}[theorem]{Proposition}
\newtheorem{corollary}[theorem]{Corollary}
\theoremstyle{definition}
\newtheorem{definition}[theorem]{Definition}
\theoremstyle{remark}
\newtheorem{remark}[theorem]{Remark}
\numberwithin{equation}{section}
\def\rr{{\mathbb R}}
\def\rn{{{\rr}^n}}
\def\zz{{\mathbb Z}}
\def\nn{{\mathbb N}}
\def\cm{{\mathcal M}}
\def\mi{{\mathrm I}}
\def\fz{\infty}
\def\az{\alpha}
\def\bz{\beta}
\def\ez{\epsilon}
\def\gz{{\gamma}}
\def\bgz{{\Gamma}}
\def\kz{\kappa}
\def\lz{\lambda}
\def\tz{\theta}
\def\vz{\varphi}
\def\vz{\varphi}
\def\lf{\left}
\def\r{\right}
\def\hs{\hspace{0.35cm}}
\def\ls{\lesssim}
\def\gs{\gtrsim}
\def\noz{\nonumber}
\def\wz{\widetilde}
\def\st{\subset}
\def\com{\complement}
\def\bh{\backslash}
\def\dt{\,\frac{dt}{t}}
\def\dytn{\,\frac{dy\,dt}{t^{n+1}}}
\def\dtn{\,\frac{dt}{t^{n+1}}}
\def\supp{\mathop\mathrm{\,supp\,}}
\def\loc{{\mathop\mathrm{\,loc\,}}}
\def\essinf{\mathop\mathrm{\,ess\,inf\,}}
\def\esup{\mathop\mathrm{\,ess\,sup\,}}
\def\aa{{\mathbb A}}
\def\xx{{\wz{x}}}
\def\mor{{{\mathcal M}^{\varphi,\phi}(\rn)}}
\def\wmor{{M^{\Phi,\phi}_w(\rn)}}
\def\cam{{\mathcal{L}^{\vz,q}(\rn)}}
\def\scam{\mathcal{L}^{\vz,q}_{\ast}(\rn)}
\def\cmo{{\mathcal{L}^{\vz,1}(\rn)}}
\def\clo{{\mathcal{L}^{\vz,1}_\ast(\rn)}}
\def\lv{{L^{\vz}(\rn)}}
\def\vb{{\|\chi_B\|_\lv^{-1}}}
\def\kb{{\|\chi_B\|_\lv}}
\def\kjb{{\|\chi_{2^jB}\|_{L^\vz(\rn)}}}
\def\ca{\mathcal{C}_{\alpha}(\rn)}
\def\sa{{ S}_{\alpha}}
\def\ga{g_{\lz,\alpha}^{\ast}}
\def\saj{{\mathcal S}_{\alpha,2^j}}
\def\hv{{H^{\vz}(\rn)}}
\def\lv{{L^{\vz}(\rn)}}
\def\lvb{{\vz,B}}
\def\lpb{{\Phi,\phi,B}}
\def\bmo{{\mathop\mathrm {BMO}(\rn)}}
\begin{document}

\arraycolsep=1pt
\setcounter{page}{1}

\title[Intrinsic Littlewood-Paley Functions]
{ Boundedness of Intrinsic Littlewood-Paley Functions on Musielak-Orlicz
Morrey and Campanato Spaces}

\author[Yiyu Liang, Eiichi Nakai, Dachun
Yang and Junqiang Zhang]{Yiyu Liang$^1$, Eiichi Nakai$^2$, Dachun Yang$^1$$^{*}$\\
and Junqiang Zhang$^1$}

\address{$^{1}$ School of Mathematical Sciences, Beijing Normal University,
Laboratory of Mathematics and Complex Systems, Ministry of
Education, Beijing 100875, People's Republic of China.}
\email{\textcolor[rgb]{0.00,0.00,0.84}{yyliang@mail.bnu.edu.cn;}}
\email{\textcolor[rgb]{0.00,0.00,0.84}{dcyang@bnu.edu.cn}}
\email{\textcolor[rgb]{0.00,0.00,0.84}{zhangjunqiang@mail.bnu.edu.cn}}

\address{$^{2}$ Department of Mathematics, Ibaraki University, Mito, Ibaraki 310-8512, Japan.}
\email{\textcolor[rgb]{0.00,0.00,0.84}{enakai@mx.ibaraki.ac.jp}}

\subjclass[2010]{Primary 42B25; Secondary 42B35, 46E30, 46E35.}

\keywords{Intrinsic Littlewood-Paley function, commutator, Musielak-Orlicz space,
Morrey space, Campanato space.}

\date{Received: xxxxxx; Revised: yyyyyy; Accepted: zzzzzz.
\newline \indent $^{*}$ Corresponding author}

\begin{abstract}
Let $\varphi: {\mathbb R^n}\times
[0,\infty)\to[0,\infty)$ be such that
$\vz(x,\cdot)$ is nondecreasing, $\varphi(x,0)=0$, $\varphi(x,t)>0$
when $t>0$, $\lim_{t\to\infty}\varphi(x,t)=\infty$
and $\vz(\cdot,t)$ is
a Muckenhoupt $A_\infty({\mathbb R^n})$ weight uniformly in $t$.
Let $\phi: [0,\infty)\to[0,\infty)$ be nondecreasing.
In this article,
the authors introduce the Musielak-Orlicz Morrey
space $\mathcal M^{\varphi,\phi}(\mathbb R^n)$
and obtain the boundedness on $\mathcal M^{\varphi,\phi}(\mathbb R^n)$ of
the intrinsic Lusin area function $S_{\alpha}$, the intrinsic $g$-function
$g_{\alpha}$, the intrinsic $g_{\lambda}^*$-function
$g^\ast_{\lambda, \alpha}$ and their commutators with
${\rm BMO}(\rn)$ functions, where $\alpha\in(0,1]$,
$\lambda\in(\min\{\max\{3,\,p_1\},3+2\az/n\},\infty)$
and $p_1$ denotes the uniformly upper type index of $\vz$.
Let $\Phi: [0,\infty)\to[0,\infty)$ be nondecreasing, $\Phi(0)=0$, $\Phi(t)>0$
when $t>0$, and $\lim_{t\to\infty}\Phi(t)=\infty$, $w\in A_\infty(\mathbb R^n)$ and
$\phi: (0,\infty)\to(0,\infty)$ be nonincreasing.
The authors also introduce the
weighted Orlicz-Morrey space $M_w^{\Phi,\phi}(\mathbb R^n)$
and obtain the boundedness on $M_w^{\Phi,\phi}(\mathbb R^n)$
of the aforementioned intrinsic Littlewood-Paley functions
and their commutators with ${\rm BMO}(\rn)$ functions.
Finally, for $q\in[1,\fz)$, the boundedness of the aforementioned
intrinsic Littlewood-Paley functions on
the Musielak Orlicz Campanato space
$\mathcal L^{\varphi,q}(\mathbb R^n)$ is also established.
\end{abstract} 
\maketitle

\section{Introduction\label{s1}}

It is well known that the intrinsic Littlewood-Paley
$g$-function and the intrinsic Lusin area function were first introduced
by Wilson in \cite{w07} to answer a conjecture proposed
by R. Fefferman and E. M. Stein on the boundedness of the
Lusin area function $S$ from the weighted Lebesgue space $L^2_{M(v)}(\rn)$
to the weighted Lebesgue space $L^2_v(\rn)$,
where $0\le v\in L^1_\loc(\rn)$ and $M$ denotes the Hardy-Littlewood maximal function.
Observe that these intrinsic Littlewood-Paley functions can be thought
of as ``grand maximal'' Littlewood-Paley functions
in the style of the ``grand maximal function''
of C. Fefferman and Stein from \cite{fs72}:
they dominate all the Littlewood-Paley functions of the form
$S(f)$ (and the classical ones as well), but are not essentially bigger than any one of
them. Like the Fefferman-Stein and Hardy-Littlewood maximal functions, their generic
natures make them pointwise equivalent to each other
and extremely easy to work with. Moreover, the intrinsic
Lusin area function has the distinct advantage of
being pointwise comparable at different cone openings, which is a property
long known not to hold true for the classical Lusin area function
(see Wilson \cite{w07,w08}).

More applications of intrinsic Littlewood-Paley functions were
given by Wilson \cite{w10,w11} and Lerner \cite{l11, l13}.
In particular, Wilson \cite{w08} proved that these
intrinsic Littlewood-Paley functions are bounded on the weighted Lebesgue
space $L^p_w(\rn)$ when $p\in(1,\fz)$ and $w\in A_p(\rn)$
(the class of Muckenhoupt weights). Recently, Wang \cite{w12} and Justin \cite{f13}
also obtained the boundedness of these
intrinsic Littlewood-Paley functions on weighted Morrey spaces.

Recall that the classical Morrey space $\mathcal{M}^{p,\kz}(\rn)$
was first introduced by Morrey in \cite{m38} to
investigate the local behavior of solutions to second
order elliptic partial differential equations.
For $p\in [1,\fz)$ and $\kz\in [0,1)$, a function $f\in L^p_\loc(\rn)$
is said to belong to the \emph{Morrey space $\mathcal{M}^{p,\kz}(\rn)$}, if
$$\|f\|_{\mathcal{M}^{p,\kz}(\rn)}:=
\sup_{B\st\rn}\lf[\frac{1}{|B|^{\kz}}\int_{B}|f(y)|^p\,dy\r]^{1/p}<\fz,$$
where the supremum is taken over all balls $B$ of $\rn$.
The boundedness, on the Morrey space, of classical operators,
such as the Hardy-Littlewood maximal operator,
the fractional integral
operator and the Calder\'on-Zygmund singular integral operator, was studied in
\cite{a75, cf87}. In particular, Komori and Shirai \cite{ks09} first
introduced the weighted Morrey space and obtained
the boundedness of the above these classical operators on this space.

As a generalization of the space $\bmo$,
the \emph{Campanato space $L^{p,\bz}(\rn)$}
for $\bz\in\rr$ and $p\in[1,\fz)$,
introduced by Campanato \cite{c64},
was defined as the set of all locally integrable functions  $f$ such that
\begin{eqnarray*}
\|f\|_{L^{p,\bz}(\mathbb R^n)}
:= \sup_{B\subset\rn}|B|^{-\bz}
\lf\{\frac{1}{|B|}\int_{B}|f(x)-f_B|^p\,dx\r\}^{1/p}<\infty,
\end{eqnarray*}
where the supremum is taken over all balls $B$ in $\mathbb{R}^n$
and $f_B$ denotes the average of $f$ on $B$, namely,
\begin{equation}\label{f-b}
f_B:=\frac 1{|B|}\int_B f(y)\,dy.
\end{equation}
It is well known that, when $\kz\in(0,1)$, $p\in[1,\fz)$ and
$\bz=(\kz-1)/p$, $\cm^{p,\kz}(\rn)$ and $L^{p,\bz}(\rn)$
coincide with equivalent norms (see, for example, \cite{ax12}).
Assuming the finiteness of the Littlewood-Paley functions
on a positive measure set,
Yabuta \cite{y96} first established the boundedness of the Littlewood-Paley functions
on $L^{p,\bz}(\rn)$ with $p\in(1,\fz)$ and $\bz\in[-1/p,1)$.
Sun \cite{s04} further improved these results by
assuming the finiteness of the Littlewood-Paley functions
only on one point. Meng, Nakai and Yang \cite{mny10} proved
that some generalizations of the classical Littlewood-Paley functions,
without assuming the regularity of their kernels,
are bounded from $L^{p,\bz}(\rn)$ to $L^{p,\bz}_*(\rn)$
with $p\in[2,\fz)$ and $\bz\in[-1/p,0]$,
where $L^{p,\bz}_*(\rn)$ is a proper subspace of $L^{p,\bz}(\rn)$.
This result, which was proved in \cite{mny10} to be true even on spaces of homogeneous
type in the sense of Coifman and Weiss (see \cite{cw}), further improves the result
of Yabuta \cite{y96} and Sun \cite{s04}.

On the other hand, Birnbaum-Orlicz \cite{bo31} and Orlicz \cite{o32}
introduced the Orlicz space, which is a natural generalization
of $L^p(\rn)$. Let $\vz$ be a growth function (see
Definition \ref{d-vz} below for its definition).
Recently, Ky \cite{ky} introduced a new \emph{Musielak-Orlicz Hardy space $\hv$},
which generalizes both the Orlicz-Hardy space (see, for example, \cite{j80,v87})
and the weighted Hardy space (see, for example, \cite{g79, gr85, k10, ly12, st89}).
Moreover, characterizations of $\hv$ in terms of Littlewood-Paley functions
(see \cite{hyy,lhy11}) and the intrinsic ones  (see \cite{ly13-2}) were
also obtained.
As the dual space of $\hv$, the \emph{Musielak-Orlicz Campanato space $\cam$}
with $q\in[1,\fz)$
was introduced in \cite{ly13}, in which some characterizations of $\cam$
were also established.
Recall that Musielak-Orlicz functions are the natural generalization of Orlicz functions
that may vary in the spatial variables; see, for example, \cite{m83}.
The motivation to study function spaces of Musielak-Orlicz type comes
from their wide applications in physics and mathematics (see, for example,
\cite{bg10,bgk12,bijz07,ns12,ky}).
In particular, some special Musielak-Orlicz Hardy spaces appear naturally
in the study of the products of functions in $\bmo$ and
$H^1(\rn)$ (see \cite{bgk12,bijz07}), and the endpoint estimates for
the div-curl lemma and the commutators of singular integral operators
(see \cite{bfg10,bgk12,ky2,p13}).

In this article, we introduce the Musielak-Orlicz Morrey space $\mor$ and
the weighted Orlicz-Morrey space $\wmor$, and obtain the
boundedness, respectively, on these spaces of
intrinsic Littlewood-Paley functions and their commutators
with $\rm{BMO}(\rn)$ functions.
Moreover, we also obtain the boundedness of
intrinsic Littlewood-Paley functions on the Musielak-Orlicz Campanato
space $\cam$ which was introduced in \cite{ly13}.

To be precise, this article is organized as follows.

In Section \ref{s2}, for a growth function $\varphi$ and a nondecreasing function
$\phi$, we introduce the Musielak-Orlicz Morrey space $\mathcal
M^{\varphi,\phi}(\mathbb R^n)$ and obtain the boundedness on
$\mathcal M^{\varphi,\phi}(\mathbb R^n)$ of the intrinsic Lusin
area function $S_{\alpha}$, the intrinsic $g$-function
$g_{\alpha}$, the intrinsic $g_{\lambda}^*$-function
$g^\ast_{\lambda, \alpha}$ with $\alpha\in(0,1]$ and
$\lambda\in(\min\{\max\{3,\,p_1\},3+2\az/n\},\infty)$ and their
commutators with ${\rm BMO}(\rn)$ functions.
To this end, we first introduce an assistant function
$\wz{\psi}$ and establish some estimates, respect to $\vz$ and $\wz\psi$,
which play key roles in the proofs (see Lemma \ref{lem2.4} below).
Another key tool needed is a Musielak-Orlicz type interpolation
theorem proved in \cite{lhy11}.
We point out that, in \cite{w12},
Wang established the boundedness of $\ga$ and
$[b,\ga]$ on the weighted Morrey space $\mathcal{M}^{p,\kappa}_w(\rn)$ with
$\lz>\max\{3,p\}$.
This corresponds to the case when
\begin{equation}\label{wtp}
\vz(x,t):=w(x)t^p\quad \mathrm{for\ all}\ x\in\rn\ \mathrm{and}\ t\in[0,\fz)
\end{equation}
with $w\in A_p(\rn)$ and $p\in (1,\fz)$  of Theorem \ref{t2.3}
and Proposition \ref{t2.4} below, in which, even for this special case,
we also improve the range of
$\lz>p$ in \cite{w12} to a wider range $\lz>3+2\az/n$
when $p>3+2\az/n$.

In Section \ref{s3}, let $\Phi: [0,\infty)\to[0,\infty)$ be
nondecreasing, $\Phi(0)=0$, $\Phi(t)>0$ when $t>0$, and
$\lim_{t\to\infty}\Phi(t)=\infty$, $w\in A_\infty(\mathbb R^n)$
and $\phi: (0,\infty)\to(0,\infty)$ be nonincreasing.
In this section, motivated by Nakai \cite{n08},
we first introduce the weighted Orlicz-Morrey space
$M_w^{\Phi,\phi}(\mathbb R^n)$ and obtain the boundedness on
$M_w^{\Phi,\phi}(\mathbb R^n)$ of intrinsic Littlewood-Paley
functions and their
commutators with $\rm{BMO}(\rn)$ functions.

In Section \ref{s4}, for $q\in[1,\fz)$, the boundedness
of the aforementioned intrinsic Littlewood-Paley functions on the
Musielak-Orlicz Campanato space $\mathcal L^{\varphi,q}(\mathbb R^n)$,
which was introduced in \cite{ly13}, is
also established. To be precise, following the
ideas of \cite{hmy08} and \cite{mny10},
we first introduce a subspace $\scam$ of $\cam$
and prove that the intrinsic
Littlewood-Paley functions are bounded from $\cam$ to $\scam$
which further implies that the intrinsic
Littlewood-Paley functions are bounded on $\cam$.
Even when
\begin{equation}\label{tp}
\vz(x,t):=t^p\quad \mathrm{for\ all}\ x\in\rn\ \mathrm{and}\ t\in(0,\fz),
\end{equation}
with $q\in(1,\fz)$ and $p\in(n/(n+1),q/(q-1)]$,
these results are new.

Finally we make some conventions on notation. Throughout the whole
paper, we denote by $C$ a \emph{positive constant} which is
independent of the main parameters, but it may vary from line to
line. The {\it symbol} $A\ls B$ means that $A\le CB$. If $A\ls
B$ and $B\ls A$, then we write $A\sim B$.
For any measurable subset $E$ of $\rn$, we denote by $E^\complement$ the {\it set}
$\rn\setminus E$ and by $\chi_{E}$ its \emph{characteristic function}.
For $p\in[1,\fz]$, we denote by $p'$ its
conjugate number, namely, $1/p+1/p'=1$.
Also, let $\nn:=\{1,\,2,\,
\ldots\}$ and $\zz_+:=\nn\cup\{0\}$.

\section{Boundedness of intrinsic Littlewood-Paley functions
and their commutators on Musielak-Orlicz Morrey spaces\label{s2}}

In this section, we introduce the Musielak-Orlicz Morrey space $\mor$ and establish
the boundedness on $\mor$ of intrinsic Littlewood-Paley functions
and their commutators with $\bmo$ functions.
We begin with recalling the definition of growth functions
which were first introduced by Ky \cite{ky}.

Recall that a function
$\Phi:[0,\fz)\to[0,\fz)$ is called an \emph{Orlicz function} if it
is nondecreasing, $\Phi(0)=0$, $\Phi(t)>0$ for all $t\in(0,\fz)$ and
$\lim_{t\to\fz}\Phi(t)=\fz$.
We point out that, different from the classical
Orlicz functions, the \emph{Orlicz functions in this article may not be convex}.
The function $\Phi$ is said to be of
\emph{upper type $p$} (resp. \emph{lower type $p$}) for some $p\in[0,\fz)$, if
there exists a positive constant $C$ such that, for all
$t\in[1,\fz)$ (resp. $t\in[0,1]$) and $s\in[0,\fz)$,
$$\Phi(st)\le Ct^p \Phi(s).$$

For a given function $\vz$ : $\rr^{n}\times[0,\fz)\to[0,\fz)$ such that,
for any $x\in\rr^{n}$, $\vz(x,\cdot)$ is an Orlicz function,
$\vz$ is said to be of \emph{uniformly upper type p}
(resp. \emph{uniformly lower type p}) for some $p\in[0,\fz)$,
if there exists a positive constant $C$ such that,
for all $x\in\rr^{n}$, $t\in[0,\fz)$ and $s\in[1,\fz)$
(resp. $s\in[0,1]$), 
$$\vz(x,st)\le Cs^p\vz(x,t).$$

The function $\vz(\cdot,t)$ is said to satisfy the
\emph{uniformly Muckenhoupt condition for some $q\in[1,\fz)$},
denoted by $\vz\in\aa_q(\rn)$, if,
when $q\in (1,\fz)$,
\begin{equation*}
\sup_{t\in
(0,\fz)}\sup_{B\subset\rn}\frac{1}{|B|^q}\int_B
\vz(x,t)\,dx \lf\{\int_B
[\vz(y,t)]^{-q'/q}\,dy\r\}^{q/q'}<\fz,
\end{equation*}
where $1/q+1/q'=1$, or, when $q=1$,
\begin{equation*}
\sup_{t\in (0,\fz)}
\sup_{B\subset\rn}\frac{1}{|B|}\int_B \vz(x,t)\,dx
\lf(\esup_{y\in B}[\vz(y,t)]^{-1}\r)<\fz.
\end{equation*}
Here the first supremums are taken over all $t\in(0,\fz)$ and the
second ones over all balls $B\subset\rn$. In particular,
when $\vz(x,t):=w(x)$ for all $x\in\rn$,
where $w$ is a weight function, $\aa_q(\rn)$ is
just the classical $A_q(\rn)$ weight class of Muckenhoupt.

Let 
$$\aa_{\fz}(\rn):=\bigcup_{q\in[1,\fz)}\aa_{q}(\rn).$$

Now we recall the notion of growth functions.

\begin{definition}\label{d-vz}\rm
A function $\vz:\rn\times[0,\fz)\to[0,\fz)$ is called a \emph{growth function},
if the following conditions are satisfied:
\begin{enumerate}
\item[(i)] $\vz$ is a \emph{Musielak-Orlicz function}, namely,
\begin{enumerate}
    \item[(i)$_1$] the function $\vz(x,\cdot):[0,\fz)\to[0,\fz)$ is an
    Orlicz function for all $x\in\rn$;
    \item [(i)$_2$] the function $\vz(\cdot,t)$ is a measurable
    function for all $t\in[0,\fz)$.
\end{enumerate}
\item[(ii)] $\vz\in\aa_{\fz}(\rn)$.
\item[(iii)] $\vz$ is of uniformly lower type $p_0$
and of uniformly upper type $p_1$, where $0<p_0\le p_1<\fz$.
\end{enumerate}
\end{definition}

\begin{remark}\label{rem-vz}
(i) The notion of growth functions here is slightly different from \cite{ky}.
We only need $0<p_0\le p_1<\fz$ here, however, in \cite{ky}, $p_0\in (0,1]$ and $p_1=1$.

(ii) By ii) of \cite[Lemma 4.1]{ky}, without loss of generality, we may assume
that, for all $x\in\rn$, $\vz(x,\cdot)$ is continuous and strictly increasing.
Otherwise, we may replace $\vz$ by another equivalent growth function $\wz\vz$
which is continuous and strictly increasing.
\end{remark}

Throughout the whole paper, we \emph{always
assume that $\vz$ is a growth function} as in Definition
\ref{d-vz} and, for any measurable subset $E$ of $\rn$ and $t\in[0,\fz)$,
we \emph{denote $\int_E\vz(x,t)\,dx$ by $\vz(E,t)$}.

The \emph{Musielak-Orlicz space $L^{\vz}(\rn)$} is defined to be the space
of all measurable functions $f$ such that
$\int_{\rn}\vz(x,|f(x)|)\,dx<\fz$ with the \emph{Luxembourg norm}
(or \emph{Luxembourg-Nakano norm})
$$\|f\|_{L^{\vz}(\rn)}:=\inf\lf\{\mu\in(0,\fz):\ \int_{\rn}
\vz\lf(x,\frac{|f(x)|}{\mu}\r)\,dx\le1\r\}.$$
If $\vz$ is as in \eqref{wtp} with $p\in(0,\fz)$ and $w\in A_p(\rn)$,
then $L^{\vz}(\rn)$ coincides with the weighted Lebesgue space $L^p_w(\rn)$.

Now, we introduce the Musielak-Orlicz Morrey space $\mor$.

\begin{definition}\label{d-Mor}
Let $\vz$ be a growth function and $\phi : [0,\fz)\to [0,\fz)$ be nondecreasing.
A locally integrable function $f$ on $\rn$ is said to belong to the
\emph{Musielak-Orlicz Morrey space $\mor $}, if
$$\|f\|_{\mor}:=\sup_{B\st\rn}\phi(\vz(B,1))\|f\|_{\lvb}< \fz,$$
where the supremum is taken over all balls $B$ of $\rn$ and
$$\|f\|_{\lvb}:=\inf\lf\{\mu\in(0,\fz):
\ \frac{1}{\vz(B,1)}\int_{B}\vz\lf(x, \frac{|f(x)|}{\mu}\r)\,dx \le 1\r\}.$$
\end{definition}

\begin{remark}\label{rem-mor}
(i) We first claim that $\|\cdot\|_\mor$ is a quasi-norm. Indeed,
since $\vz$ is of uniformly lower type $p_0$
and of uniformly upper type $p_1$ with $0<p_0\le p_1<\fz$,
we see that, for any $x\in\rn$ and $0<a\le b$,
$$\vz(x,a+b)\ls \lf(\frac{a+b}{2b}\r)^{p_0}\vz(x,2b)
\ls 2^{p_1}\vz(x,b)\ls\vz(x,a)+\vz(x,b),$$
which further implies that, for any ball $B\st\rn$ and $f,\,g\in L^1_\loc(\rn)$
with $\|f\|_{\lvb}+\|g\|_{\lvb}\neq0$,
\begin{eqnarray*}
&&\frac{1}{\vz(B,1)}\int_{B}
\vz\lf(x, \frac{|f(x)+g(x)|}{\|f\|_{\lvb}+\|g\|_{\lvb}}\r)\,dx\\
&&\hs\ls\frac{1}{\vz(B,1)}\int_{B}\lf[
\vz\lf(x, \frac{|f(x)|}{\|f\|_{\lvb}+\|g\|_{\lvb}}\r)
+\vz\lf(x, \frac{|g(x)|}{\|f\|_{\lvb}+\|g\|_{\lvb}}\r)\r]\,dx\ls1
\end{eqnarray*}
and hence, by $p_0\in(0,\fz)$,
$$\|f+g\|_{\lvb}\ls\|f\|_{\lvb}+\|g\|_{\lvb},$$
where the implicit positive constant is independent of $B$.
This further implies that $\|\cdot\|_\mor$ is a quasi-norm,
namely, for any $f,\,g\in\mor$, there exists a constant $\kz\in[1,\fz)$
such that
$$\|f+g\|_\mor\le\kz\lf[\|f\|_{\mor}+\|g\|_{\mor}\r].$$
Thus, the claim holds true.

Moreover, from the claim and
the Aoki-Rolewicz theorem in \cite{ao42,ro57}, it follows
that there exists a quasi-norm $\|\!|\cdot\|\!|$ on
$\mor$ and $\gz\in(0,1]$ such that, for all
$f\in\mor$,
$\|\!|f\|\!|\sim\|f\|_\mor$ and, for any sequence
$\{f_j\}_{j\in\nn}\subset\mor$,
$$\lf\|\!\lf|\sum_{j\in\nn} f_j\r\|
\!\r|^{\gz}\le\sum_{j\in\nn}\|\!|f_j\|\!|^{\gz},$$
which is needed later.

(ii) If $\vz$ is as in \eqref{tp} with $p\in (1,\fz)$ and
$\phi(t):=t^s$ for all $t\in[0,\fz)$ with $s\in(0,1/p)$,
then $\mor$ coincides with the classical Morrey space $\cm^{p,1-sp}(\rn)$.

(iii) If $\vz(x,t):=\Phi(t)$ for all $x\in\rn$ and $t\in (0,\fz)$
with $\Phi$ being an Orlicz function,
then $\mor$ coincides with the Orlicz-Morrey space in \cite{sst12}.

(iv) If $\vz$ is as in \eqref{wtp} with $p\in (1,\fz),w\in A_p(\rn)$
and $\phi(t)$ is as in (ii),
then $\mor$ coincides with
the weighted Morrey space $\mathcal{M}^{p,1-sp}_w(\rn)$ in \cite{w12}
(Observe that the weighted Morrey space $\mathcal{M}^{p,1-sp}_w(\rn)$
was denoted by another notation in \cite{w12}).
\end{remark}

Now we recall the notions of intrinsic Littlewood-Paley functions
introduced by Wilson \cite{w07}.

For $\az\in (0,1]$, let $\ca$ be the family of functions $\tz$,
defined on $\rn$, such that $\supp \tz \st \{x\in\rn \,: |x|\le 1\}$,
$\int_\rn \tz(x)\,dx=0$ and, for all $x_1,\,x_2\in\rn$,
$$|\tz(x_1)-\tz(x_2)|\le|x_1-x_2|^{\az}.$$
For all $f\in L^1_\loc(\rn)$ and $(y,t)\in \mathbb{R}^{n+1}_{+}:=\rn \times (0,\fz)$,
let
$$A_{\az}(f)(y,t):=\sup_{\tz\in\ca}|f\ast\tz_t(y)|=
  \sup_{\tz\in\ca}\lf|\int_{\rn}\tz_{t}(y-z)f(z)\,dz\r|.$$
For all $\az\in (0,1]$ and $f\in L^1_\loc(\rn)$,
the \emph{intrinsic Littlewood-Paley $g$-function $g_{\az}(f)$},
the \emph{intrinsic Lusin area function $\sa(f)$} and the
\emph{intrinsic Littlewood-Paley $g^{\ast}_{\lz}$-function $\ga(f)$} of $f$ are, respectively,
defined by setting, for all $x\in\rn$,
$$g_{\az}(f)(x):=\lf\{\int^\fz_0[A_{\az}(f)(x,t)]^2\dt\r\}^{1/2},$$
$$\sa(f)(x):=\lf\{\int^\fz_0\int_{\{y\in\rn :\ |y-x|<t\}}
[A_{\az}(f)(y,t)]^2\dytn\r\}^{1/2}$$
and
$$\ga(f)(x):=\lf\{\int^\fz_0\int_\rn
\lf(\frac{t}{t+|x-y|}\r)^{\lz n}\lf[A_{\az}(f)(y,t)\r]^2 \dytn \r\}^{1/2}.$$
Let $\bz \in (0,\fz)$. We also introduce the varying-aperture version ${S}_{\az,\beta}(f)$ of $\sa(f)$
by setting, for all $f\in L^1_\loc(\rn)$ and $x\in\rn$,
$${S}_{\az,\beta}(f)(x):=\lf\{\int_0^\fz\int_{\{y\in\rn :\ |y-x|<\bz t\}}
[A_{\az}(f)(y,t)]^2\dytn\r\}^{1/2}.$$

To obtain the boundedness of all the intrinsic Littlewood-Paley functions
on $\mor$, we need to introduce an auxiliary function $\wz\psi$
and establish some technical lemmas first.

Let $\vz$ be a growth function with $1\le p_0\le p_1<\fz$.
For all $x\in\rn$ and $t\in[0,\fz)$, let
$$\psi(x,t):=\vz(x,t)/\vz(x,1).$$
Obviously, for all $x\in \rn$, $\psi(x,\cdot)$ is an Orlicz function
and, for all $t\in[0,\fz)$, $\psi(\cdot,t)$ is measurable.
For all $x\in\rn$ and $s\in[0,\fz)$,
the \emph{complementary function of $\psi$} is defined by
\begin{eqnarray}\label{c-f}
\wz{\psi}(x,s):=\sup_{t>0}\{st-\psi(x,t)\}
\end{eqnarray}
(see \cite[Definition 13.7]{m83}).
On the complementary function $\wz{\psi}$, we have
the following properties.
\begin{lemma}\label{c-f-p}
Let $\vz$ be as in Definition \ref{d-vz} with $1\le p_0\le p_1<\fz$
and $\wz{\psi}$ as in \eqref{c-f}.

{\rm (i)} If $1\le p_0\le p_1<\fz$, then there exists a positive constant
$C$ such that, for all $x\in\rn$, 
$$0\le\wz{\psi}(x,1)\le C.$$

{\rm (ii)} If $1<p_0\le p_1<\fz$, then $\wz{\psi}$ is a growth function
of uniformly lower type
$p'_1$ and uniformly upper type $p'_0$, where $1/p_0+1/p'_0=1=1/p_1+1/p'_1$.
\end{lemma}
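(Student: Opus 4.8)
The plan is to work directly from the definition \eqref{c-f} of the complementary function $\wz\psi(x,s)=\sup_{t>0}\{st-\psi(x,t)\}$, where $\psi(x,t)=\vz(x,t)/\vz(x,1)$, and to convert the uniform type conditions on $\vz$ (equivalently on $\psi$, since dividing by $\vz(x,1)$ does not affect them and gives $\psi(x,1)=1$ for every $x$) into uniform type conditions on $\wz\psi$ via the classical Young-duality dictionary. For part (i), I would first observe that $\psi(x,1)=1$ and that uniform lower type $p_0\ge1$ gives, for $t\in[0,1]$, $\psi(x,t)\le t^{p_0}\psi(x,1)=t^{p_0}\le t$ (since $p_0\ge 1$ and $t\le1$), hence $st-\psi(x,t)\ge t(s-1)$ for such $t$; while for $t\ge1$, uniform upper type $p_1$ gives $\psi(x,t)\ge t^{1/p_1}$ up to the reciprocal constant... more carefully, uniform upper type $p_1$ of $\psi$ means $\psi(x,st)\le Cs^{p_1}\psi(x,t)$ for $s\ge1$, which applied with the roles arranged gives a lower bound $\psi(x,t)\gtrsim t^{p_1}$ for $t\ge 1$ (since $\psi(x,1)=1$). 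Therefore, for $t\ge1$, $st-\psi(x,t)\le st-c\,t^{p_1}$, and because $p_1\ge1$ this quantity is bounded above by a constant depending only on $s$ and $c$ (and $p_1$) but not on $x$; evaluating at $s=1$ yields the uniform bound $\wz\psi(x,1)\le C$. Nonnegativity is immediate by taking $t\to0^+$ in the supremum (the value of $st-\psi(x,t)$ tends to $0$, using $\psi(x,0)=0$). This is the easy half.

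For part (ii), the strategy is to reduce to a one-variable statement: fix $x$ and show that if $\psi(x,\cdot)$ is of lower type $p_0$ and upper type $p_1$ with $1<p_0\le p_1<\fz$, with \emph{uniform} constants, then its complementary function $\wz\psi(x,\cdot)$ is of lower type $p_1'$ and upper type $p_0'$, with constants that again do not depend on $x$. The key computational input is the standard scaling relation for Legendre-type transforms: from $\psi(x,\lambda t)\le C\lambda^{p}\psi(x,t)$ (appropriate range of $\lambda$) one derives, by the substitution $t\mapsto t/\lambda$ inside the supremum defining $\wz\psi$, a reversed inequality $\wz\psi(x,\mu s)\gtrsim \mu^{p'}\wz\psi(x,s)$ (appropriate range of $\mu$), equivalently the desired upper/lower type bounds for $\wz\psi$ with conjugate exponents. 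Concretely: for $\mu\ge1$, write $\wz\psi(x,\mu s)=\sup_{t>0}\{\mu s t-\psi(x,t)\}=\sup_{u>0}\{\mu s\cdot\mu^{-1/(p_1-1)}u-\psi(x,\mu^{-1/(p_1-1)}u)\}$ after the substitution $t=\mu^{-1/(p_1-1)}u$; then use uniform upper type $p_1$ on the $\psi$ term (valid since $\mu^{-1/(p_1-1)}\le1$, so this is the lower-type direction for $\psi$, giving $\psi(x,\mu^{-1/(p_1-1)}u)\le C\mu^{-p_1/(p_1-1)}\psi(x,u)$) and factor out a power of $\mu$; one checks the exponents match $p_1'=p_1/(p_1-1)$. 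The analogous manipulation with $p_0$ and the range $\mu\le1$ (or $s\ge1$) gives the other bound. The remaining growth-function requirements on $\wz\psi$ — that $\wz\psi(x,\cdot)$ is an Orlicz function for each $x$, that $\wz\psi(\cdot,s)$ is measurable, and that $\wz\psi\in\aa_\fz(\rn)$ — I would handle as follows: the Orlicz-function axioms (nondecreasing in $s$, vanishing at $0$, positive for $s>0$ using Remark \ref{rem-vz}(ii) and $p_0>1$, tending to $\fz$) are classical for complementary functions and follow from the one-variable theory in \cite{m83}; measurability in $x$ follows since the supremum in \eqref{c-f} can be taken over rational $t$ (by continuity of $\psi(x,\cdot)$, Remark \ref{rem-vz}(ii)), making $\wz\psi(\cdot,s)$ a countable supremum of measurable functions; and $\wz\psi\in\aa_\fz(\rn)$ will follow from $\vz\in\aa_\fz(\rn)$ together with part (i) and the type bounds just established, since the $\aa_q$ condition at a fixed level $s$ is comparable, via the type estimates, to the same condition for $\vz(\cdot,1)$ and hence for $\vz$.

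The main obstacle I anticipate is bookkeeping the \emph{uniformity} of all constants in $x$ throughout the Young-transform manipulations: every invocation of a type inequality for $\psi$ carries a constant $C$ that, by hypothesis, is independent of $x$, but one must check that the substitutions and suprema do not silently reintroduce $x$-dependence (e.g. through the point at which a supremum is attained). A clean way around this is to phrase part (ii) as a genuinely $x$-free lemma: "if $\Psi:[0,\fz)\to[0,\fz)$ is a continuous strictly increasing Orlicz function with $\Psi(1)=1$, lower type $p_0$ and upper type $p_1$ with constants $C_0,C_1$, then its complementary function has lower type $p_1'$ and upper type $p_0'$ with constants depending only on $C_0,C_1,p_0,p_1$," and then apply it pointwise in $x$ to $\Psi=\psi(x,\cdot)$. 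The $\aa_\fz$ verification is the other place requiring a little care, but it is routine once the two-sided pointwise comparison between $\wz\psi(x,s)$ and an $x$-dependent power of $\vz(x,1)$ (for $s$ in a fixed compact range, e.g. $s=1$) is in hand from parts (i)–(ii).
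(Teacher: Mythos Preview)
Your overall strategy --- direct type estimates for (i), a substitution inside the supremum defining $\wz\psi$ for (ii) --- is exactly the paper's. But your bookkeeping of which type condition yields which inequality is off in both parts, and as written the argument does not go through.

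For (i) you need an \emph{upper} bound on $t-\psi(x,t)$, hence a \emph{lower} bound on $\psi(x,t)$. For $t\ge1$ you claim upper type $p_1$ gives $\psi(x,t)\gtrsim t^{p_1}$, but upper type $p_1$ only gives the upper bound $\psi(x,t)\le C_1 t^{p_1}$ there. The lower bound you need comes from \emph{lower type $p_0$}: writing $\psi(x,1)=\psi(x,(1/t)\cdot t)\le C_0(1/t)^{p_0}\psi(x,t)$ gives $\psi(x,t)\ge t^{p_0}/C_0$, whence $t-\psi(x,t)\le t-t^{p_0}/C_0$, bounded above uniformly in $x$. (The regime $t\le1$ is trivial: $t-\psi(x,t)\le t\le1$.) This is precisely the paper's argument.

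For (ii), your key step for $\mu\ge1$ invokes the inequality $\psi(x,\mu^{-1/(p_1-1)}u)\le C\mu^{-p_1/(p_1-1)}\psi(x,u)$. Since the scaling factor $\mu^{-1/(p_1-1)}$ is $\le1$, this would be a lower-type inequality with exponent $p_1$, which is \emph{not} assumed (you only have lower type $p_0\le p_1$); upper type $p_1$ applied via $\psi(x,u)\le C_1(\mu^{1/(p_1-1)})^{p_1}\psi(x,\mu^{-1/(p_1-1)}u)$ yields the \emph{reverse} inequality. Moreover, the regime $\mu\ge1$ corresponds to the \emph{upper} type of $\wz\psi$, and the correct target exponent there is $p_0'$, not $p_1'$. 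The paper fixes both issues at once: for $\lz\ge1$ set $m:=(\lz C_0)^{-1/(p_0-1)}\in(0,1]$, use lower type $p_0$ of $\psi$ in the form $\psi(x,t)\ge \psi(x,mt)/(C_0 m^{p_0})$, substitute $\tau=mt$ in the supremum, and read off $\wz\psi(x,\lz l)\le C_0^{p_0'-1}\lz^{p_0'}\wz\psi(x,l)$. The analogous manipulation with upper type $p_1$ (taking $m\ge1$) yields lower type $p_1'$ for $\wz\psi$. Your abstract reformulation as an $x$-free lemma is a good device and does handle the uniformity concern; the paper proves only the type estimates and leaves the remaining growth-function axioms implicit.
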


\begin{proof}
To show {\rm (i)}, for all $x\in\rn$,
since there exist positive constants $C_0,\, C_1$
such that, for any $t\in(0,1]$,
$\vz(x,1)\le C_1\vz(x,t)/t^{p_1}$
and, for any $t\in(1,\fz)$,
$\vz(x,1)\le C_0\vz(x,t)/t^{p_0}$, it follows that
\begin{eqnarray*}
\wz{\psi}(x,1)
&& = \sup_{t\in(0,\fz)}\{t-\psi(x,t)\}
  = \sup_{t\in(0,\fz)}\lf\{t-\frac{\vz(x,t)}{\vz(x,1)}\r\}\\
&& \le \sup_ {t\in(0,1]}\{t-t^{p_1}/C_1\} +  \sup_{t\in(1,\fz)}\{t-t^{p_0}/C_0\}
  \ls 1.\noz
\end{eqnarray*}
Thus, {\rm (i)} holds true.

To show {\rm (ii)},
for any $\lz\in[1,\fz)$,
$C_0$ as in the proof of {\rm (i)} and $l\in(0,\fz)$,
let $m:=(\frac{1}{\lz C_0})^{\frac{1}{p_0-1}}$ and $s:=\frac{l}{C_0m^{p_0}}$.
Without loss of generality, we may assume $C_0\geq1$.
Then, we have $m\in(0,1]$, $s\in(0,\fz)$ and
\begin{eqnarray*}
\wz{\psi}(x,\lz l)&=&\wz{\psi}(x,ms)=\sup_{t>0}\{mst-\psi(x,t)\}
\le\sup_{t>0}\lf\{smt-\frac{\psi(x,mt)}{C_0m^{p_0}}\r\}\\
&=&\frac{1}{C_0m^{p_0}}\sup_{t>0}\{C_0m^{p_0}smt-\psi(x,mt)\}
=\frac{1}{C_0m^{p_0}}\wz{\psi}(x,C_0m^{p_0}s)\\
&=&\frac{\lz^{p'_0}}{C_0^{1-p'_0}}\wz{\psi}(x,l),
\end{eqnarray*}
which implies that $\wz{\psi}$ is of uniformly upper type $p'_0$.
By a similar argument, we also see that
$\wz{\psi}$ is of uniformly lower type $p'_1$,
which completes the proof of {\rm (ii)} and hence Lemma \ref{c-f-p}.
\end{proof}

For any ball $B\st \rn$ and $g\in L_\loc^1(\rn)$, let
$$\|g\|_{\wz{\psi},B}:=\inf\lf\{\mu\in(0,\fz):
\ \frac{1}{\vz(B,1)}\int_{B}\wz{\psi}\lf(x, \frac{|g(x)|}{\mu}\r)\vz(x,1)\,dx\le1\r\}.$$

For $\vz$ and $\wz{\psi}$, we also have the following properties.

\begin{lemma}\label{lem2.2}
Let $\wz{C}$ be a positive constant. Then there exists a positive constant $C$ such that

{\rm (i)} for any ball $B\st\rn$ and $\mu\in(0,\fz)$,
$$\frac{1}{\vz(B,1)}\int_{B}\vz\lf(x, \frac{|f(x)|}{\mu}\r)\,dx\le \wz{C}$$
implies that $\|f\|_{\lvb}\le C\mu;$

{\rm (ii)} for any ball $B\st\rn$ and $\mu\in(0,\fz)$,
$$\frac{1}{\vz(B,1)}\int_{B}\wz{\psi}\lf(x, \frac{|f(x)|}{\mu}\r)\vz(x,1)\,dx\le \wz{C}$$
implies that $\|f\|_{\wz{\psi},B}\le C\mu.$
\end{lemma}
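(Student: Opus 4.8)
The plan is to prove both parts of Lemma~\ref{lem2.2} by the same scaling trick, reducing to the case $\widetilde C=1$ and then invoking the defining infimum together with the uniform lower type of the relevant modular. First I would reduce to $\widetilde C \ge 1$: since the hypotheses only get stronger if $\widetilde C$ is replaced by $\max\{\widetilde C,1\}$, it suffices to treat that case, and then the constant $C$ we output will depend only on $\max\{\widetilde C,1\}$ and on $p_0$ (resp. $p'_1$).

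For part (i), suppose $\frac{1}{\vz(B,1)}\int_B\vz(x,|f(x)|/\mu)\,dx\le\widetilde C$. The goal is to find an absolute multiple $C\mu$ that makes the modular drop to $1$. Write $\lambda:=\widetilde C^{1/p_0}\ge 1$ and use uniform \emph{lower} type $p_0$ of $\vz$, applied with the scaling factor $1/\lambda\in(0,1]$: for all $x\in\rn$,
\begin{equation*}
\vz\lf(x,\frac{|f(x)|}{\lambda\mu}\r)=\vz\lf(x,\frac1\lambda\cdot\frac{|f(x)|}{\mu}\r)\le C_0\lf(\frac1\lambda\r)^{p_0}\vz\lf(x,\frac{|f(x)|}{\mu}\r),
\end{equation*}
where $C_0$ is the uniform lower type constant. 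Integrating over $B$ and dividing by $\vz(B,1)$ gives $\frac{1}{\vz(B,1)}\int_B\vz(x,|f(x)|/(\lambda\mu))\,dx\le C_0\lambda^{-p_0}\widetilde C=C_0$. If $C_0\le 1$ we are already done with $C=\lambda=\widetilde C^{1/p_0}$; in general we absorb $C_0$ by scaling once more, replacing $\lambda\mu$ by $C_0^{1/p_0}\lambda\mu$ (again lower type $p_0$, since $1/C_0^{1/p_0}$ may exceed $1$, in which case one instead uses uniformly upper type $p_1$ to bound things — cleanest is to just note $\max\{C_0,1\}^{1/p_0}\lambda\mu$ works, using lower type when the scale factor is $\le 1$ and upper type otherwise). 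Hence $\|f\|_{\lvb}\le C\mu$ with $C:=C_0^{?}\,(\max\{\widetilde C,1\})^{1/p_0}$, depending only on $\widetilde C$ and $p_0$. This is exactly the standard ``modular control of the Luxemburg norm'' argument.

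Part (ii) is identical in structure, but now the ambient modular is $g\mapsto\frac{1}{\vz(B,1)}\int_B\wz\psi(x,|g(x)|/\mu)\vz(x,1)\,dx$ and the relevant growth property is the uniform \emph{lower} type $p'_1$ of $\wz\psi$ furnished by Lemma~\ref{c-f-p}(ii): for the scaling factor $1/\lambda\in(0,1]$ one has $\wz\psi(x,s/\lambda)\le C'(1/\lambda)^{p'_1}\wz\psi(x,s)$, and multiplying both sides by the $x$-dependent but $\mu$-independent weight $\vz(x,1)$ preserves the inequality pointwise. Thus the same two scalings (first by $\widetilde C^{1/p'_1}$, then absorbing the type constant $C'$) push the $\wz\psi$-modular down to $1$, yielding $\|g\|_{\wz\psi,B}\le C\mu$. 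The only place one must be slightly careful is that Lemma~\ref{c-f-p}(ii) requires $1<p_0\le p_1<\fz$ (so that $p'_1<\fz$); under the running hypothesis $1\le p_0\le p_1<\fz$ one should either assume strict inequality here as elsewhere in the section, or note that when $p_0=1$ part (ii) is either vacuous or handled by the boundedness $\wz\psi(x,1)\le C$ from Lemma~\ref{c-f-p}(i) plus monotonicity.

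I do not expect any genuine obstacle: the proof is a routine ``homogenize the modular, then absorb constants by scaling'' argument, and the only real content is choosing the scale factor $\widetilde C^{1/p_0}$ (resp. $\widetilde C^{1/p'_1}$) so that the type estimate exactly cancels $\widetilde C$. The mildest subtlety — which is the step I would state most carefully — is tracking which direction of the type condition (lower vs.\ upper) to invoke when the cumulative scale factor is $\ge 1$ versus $\le 1$, since the $\widetilde C$ we want to absorb could be either large or small; bounding everything through $\max\{\widetilde C,1\}$ and $\max\{C_0,1\}$ sidesteps this cleanly and keeps $C$ an explicit function of $\widetilde C$ and the type indices of $\vz$ (and of $\wz\psi$).
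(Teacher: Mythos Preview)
Your approach is correct and is essentially the standard argument the paper intends (the paper itself omits the proof, referring to \cite[Lemma~4.3]{ky}, which proceeds exactly by this lower-type scaling trick). One tidying remark: rather than scaling twice and worrying about which type inequality to invoke, simply set $C:=\max\{(C_0\wz C)^{1/p_0},1\}$ and apply the uniform lower type estimate once with $s=1/C\in(0,1]$ to obtain the modular $\le C_0 s^{p_0}\wz C\le 1$; the identical one-step argument handles (ii) with $p_1'$ in place of $p_0$ (under the standing hypothesis $1<p_0\le p_1<\fz$ of the section, so that Lemma~\ref{c-f-p}(ii) applies).
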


The proof of Lemma \ref{lem2.2} is similar to that of \cite[Lemma 4.3]{ky},
the details being omitted.

\begin{lemma}\label{lem2.3}
Let $\vz$ be a growth function with $1<p_0\le p_1<\fz$.
Then, for any ball $B\st\rn$ and $\|f\|_{\lvb}\neq 0$,
it holds true that
$$\frac{1}{\vz(B,1)}\int_{B}\vz\lf(x, \frac{|f(x)|}{\|f\|_{\lvb}}\r)\,dx=1$$
and, for all $\|f\|_{\wz{\psi},B}\neq 0$, it holds true that
$$\frac{1}{\vz(B,1)}\int_{B}\wz{\psi}\lf(x, \frac{|f(x)|}{\|f\|_{\wz{\psi},B}}\r)\vz(x,1)\,dx=1.$$
\end{lemma}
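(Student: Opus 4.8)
The plan is to argue that, for a growth function $\vz$ with $1<p_0\le p_1<\fz$, the modular
$\rho_B(\mu):=\frac{1}{\vz(B,1)}\int_B\vz(x,\mu^{-1}|f(x)|)\,dx$, viewed as a function of $\mu\in(0,\fz)$, is continuous and strictly decreasing, and that it ranges over all of $(0,\fz)$; then the value $\mu=\|f\|_{\lvb}$ defined as $\inf\{\mu:\rho_B(\mu)\le1\}$ in fact satisfies $\rho_B(\mu)=1$ exactly. The same scheme applies verbatim to $\wz\psi$, using Lemma \ref{c-f-p}(ii), which guarantees that $\wz\psi$ is itself a growth function with uniformly lower type $p_1'$ and uniformly upper type $p_0'$ (both finite, both exceeding $1$), together with the observation that $\vz(\cdot,1)$ is a fixed positive weight on $B$, so $g\mapsto\frac{1}{\vz(B,1)}\int_B\wz\psi(x,|g(x)|)\vz(x,1)\,dx$ is again a well-behaved modular of the same type.

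First I would record continuity. By Remark \ref{rem-vz}(ii) we may assume $\vz(x,\cdot)$ is continuous and strictly increasing for every $x$, so $\mu\mapsto\vz(x,\mu^{-1}|f(x)|)$ is continuous on $(0,\fz)$ for a.e.\ $x\in B$; the uniformly upper type $p_1$ bound, namely $\vz(x,\mu^{-1}|f(x)|)\le C(\mu_0/\mu)^{p_1}\vz(x,\mu_0^{-1}|f(x)|)$ for $\mu\le\mu_0$, furnishes a local $L^1(B)$-dominating function near any $\mu_0$, so the dominated convergence theorem gives continuity of $\rho_B$ on $(0,\fz)$. Strict monotonicity is immediate from strict monotonicity of each $\vz(x,\cdot)$ together with the fact that $\{x\in B:f(x)\ne0\}$ has positive measure whenever $\|f\|_{\lvb}\neq0$. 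For the range: as $\mu\to0^+$, uniformly lower type $p_0>0$ forces $\vz(x,\mu^{-1}|f(x)|)\ge c\,\mu^{-p_0}\vz(x,|f(x)|)\to\fz$ pointwise on the positive-measure set where $f\neq0$, so $\rho_B(\mu)\to\fz$ by Fatou; as $\mu\to\fz$, uniformly upper type $p_1$ gives $\vz(x,\mu^{-1}|f(x)|)\le C\mu^{-p_1}\vz(x,|f(x)|)\to0$ with an integrable majorant, so $\rho_B(\mu)\to0$. (The finiteness of $\int_B\vz(x,|f(x)|)\,dx$ needed for these majorants follows because $\|f\|_{\lvb}<\fz$ implies $\rho_B(\mu)<\fz$ for some $\mu$, and then uniform upper/lower type transfers finiteness to $\mu=1$.)

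Given continuity, strict decrease, and the limits $\fz$ and $0$ at the endpoints, $\rho_B$ is a bijection from $(0,\fz)$ onto $(0,\fz)$; hence there is a unique $\mu_\ast$ with $\rho_B(\mu_\ast)=1$, and by the infimum definition $\|f\|_{\lvb}=\mu_\ast$, which is the first assertion. For the second assertion, I would apply Lemma \ref{c-f-p}(ii) to see that $\wz\psi$ satisfies all the hypotheses on $\vz$ used above (continuity and strict monotonicity in the second variable may again be assumed after passing to an equivalent function, or can be read off from the sup-definition \eqref{c-f}), note that multiplying the integrand by the fixed weight $\vz(x,1)$ changes nothing in the continuity/monotonicity/limit arguments since $0<\vz(B,1)<\fz$ and $\vz(\cdot,1)\in L^1(B)$, and conclude in the same way that $\frac{1}{\vz(B,1)}\int_B\wz\psi(x,|f(x)|/\|f\|_{\wz\psi,B})\vz(x,1)\,dx=1$ whenever $\|f\|_{\wz\psi,B}\neq0$. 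The main obstacle is purely bookkeeping: making sure the uniformly-upper-type estimate really produces an $L^1(B)$-integrable dominating function valid on a whole neighbourhood of the point at which we are checking continuity — this is where the restriction $p_0>1$ (hence $p_1'<\fz$) for the $\wz\psi$ part is used through Lemma \ref{c-f-p}(ii), since only then is $\wz\psi$ genuinely a growth function with finite uniform upper type.
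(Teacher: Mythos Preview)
Your proof is correct and follows exactly the standard continuity/strict-monotonicity/intermediate-value argument for Luxembourg-type norms that the paper defers to via \cite[Lemma~4.2]{ky}. One cosmetic slip: at $\mu\to\infty$ the estimate $\vz(x,\mu^{-1}|f(x)|)\le C\mu^{-p}\vz(x,|f(x)|)$ actually comes from the uniformly \emph{lower} type $p_0$ inequality (since $\mu^{-1}\le1$), not the upper type $p_1$ as you wrote---but either exponent forces $\rho_B(\mu)\to0$, so nothing breaks.
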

The proof of Lemma \ref{lem2.3} is similar to that of \cite[Lemma 4.2]{ky},
the details being omitted.

The following lemma is a generalized H\"{o}lder inequality with respect to $\vz$.

\begin{lemma}\label{lem2.4}
If $\vz$ is a growth function as in Definition \ref{d-vz},
then, for any ball $ B\st\rn $ and $f,\,g\in L_\loc^1(\rn)$,
$$\frac{1}{\vz(B,1)}\int_{B}|f(x)||g(x)|\vz(x,1)\,dx\le 2\|f\|_{\lvb}\|g\|_{\wz{\psi},B}.$$
\end{lemma}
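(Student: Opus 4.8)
The plan is to establish the generalized Hölder inequality by reducing to the pointwise Young-type inequality relating $\psi$ and its complementary function $\wz\psi$, and then integrating against the measure $\vz(x,1)\,dx$. First I would dispose of the trivial cases: if either $\|f\|_{\lvb}=0$ or $\|g\|_{\wz\psi,B}=0$, then the corresponding function vanishes almost everywhere on $B$ (with respect to $\vz(x,1)\,dx$, hence with respect to Lebesgue measure since $\vz(\cdot,1)$ is an $A_\infty$ weight and therefore positive a.e.), so the left-hand side is $0$ and the inequality holds. Also, if either norm is $\fz$, there is nothing to prove. So we may assume $\|f\|_{\lvb}\in(0,\fz)$ and $\|g\|_{\wz\psi,B}\in(0,\fz)$.

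Next I would recall the definition \eqref{c-f} of the complementary function, which gives directly Young's inequality: for all $x\in\rn$ and all $a,b\in[0,\fz)$,
\begin{equation*}
ab\le \psi(x,a)+\wz\psi(x,b).
\end{equation*}
Indeed, by \eqref{c-f}, $\wz\psi(x,b)\ge ab-\psi(x,a)$ for every $a$. Now apply this pointwise with $a:=|f(x)|/\|f\|_{\lvb}$ and $b:=|g(x)|/\|g\|_{\wz\psi,B}$, multiply through by $\vz(x,1)$, and integrate over $B$. Using $\psi(x,t)\vz(x,1)=\vz(x,t)$ together with the definition of $\psi$, the first term on the right integrates to $\vz(B,1)^{-1}\int_B\vz(x,|f(x)|/\|f\|_{\lvb})\,dx$, which equals $1$ when $1<p_0\le p_1<\fz$ by Lemma \ref{lem2.3} (and is at most $1$ in general by the definition of $\|\cdot\|_{\lvb}$ as an infimum, using that $\vz(x,\cdot)$ is nondecreasing and left-continuous — this is the only place one must be slightly careful, and Remark \ref{rem-vz}(ii) lets us assume continuity so that equality holds). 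Similarly, the second term integrates to at most $1$ by the definition of $\|g\|_{\wz\psi,B}$. Dividing by $\vz(B,1)$ and rearranging yields
\begin{equation*}
\frac{1}{\vz(B,1)}\int_B\frac{|f(x)|}{\|f\|_{\lvb}}\,\frac{|g(x)|}{\|g\|_{\wz\psi,B}}\,\vz(x,1)\,dx\le 2,
\end{equation*}
which is exactly the claimed bound after multiplying both sides by $\|f\|_{\lvb}\|g\|_{\wz\psi,B}$.

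The main subtlety — and the only real obstacle — is the case $p_0=1$, where Lemma \ref{lem2.3} as stated requires $p_0>1$ and where $\wz\psi$ need not be a genuine growth function (only Lemma \ref{c-f-p}(i) is available). In that case one cannot invoke the equality in Lemma \ref{lem2.3}, so I would instead argue directly: for any $\mu>\|f\|_{\lvb}$ one has $\vz(B,1)^{-1}\int_B\vz(x,|f(x)|/\mu)\,dx\le1$, hence $\vz(B,1)^{-1}\int_B\vz(x,|f(x)|/\|f\|_{\lvb})\,dx\le1$ by monotone convergence as $\mu\downarrow\|f\|_{\lvb}$ (using that $\vz(x,\cdot)$ is nondecreasing), and likewise for $g$ and $\wz\psi$; the inequality $ab\le\psi(x,a)+\wz\psi(x,b)$ holds regardless. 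This delivers the bound $2\|f\|_{\lvb}\|g\|_{\wz\psi,B}$ uniformly, completing the proof of Lemma \ref{lem2.4}.
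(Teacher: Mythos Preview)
Your proof is correct and follows essentially the same route as the paper: apply the pointwise Young inequality $ab\le\psi(x,a)+\wz\psi(x,b)$ built into the definition \eqref{c-f} of $\wz\psi$, multiply by $\vz(x,1)$, integrate over $B$, and bound each normalized integral by $1$ via Lemma~\ref{lem2.3}. You are in fact more careful than the paper, which neither disposes of the degenerate cases $\|f\|_{\lvb}=0$ or $\|g\|_{\wz\psi,B}=0$ nor addresses the boundary case $p_0=1$ where Lemma~\ref{lem2.3} is unavailable; your monotone-convergence argument covers that gap.
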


\begin{proof}
By \eqref{c-f}, we know that, for any $x\in\rn$ and ball $B\st\rn$,
$$\frac{|f(x)|}{\|f\|_\lvb}\frac{|g(x)|}{\|g\|_{\wz{\psi},B}}
\le \vz\lf(x,\frac{|f(x)|}{\|f\|_\lvb}\r)+ \wz{\psi}\lf(x,\frac{|g(x)|}{\|g\|_{\wz{\psi},B}}\r)\vz(x,1),$$
which, together with Lemma \ref{lem2.3}, implies that
\begin{eqnarray*}
\frac{1}{\vz(B,1)}\int_{B}\frac{|f(x)|}{\|f\|_{\lvb}}\frac{|g(x)|}{\|g\|_{\wz{\psi},B}}\vz(x,1)\,dx
       &\le& \frac{1}{\vz(B,1)}\int_{B}\vz\lf(x, \frac{|f(x)|}{\|f\|_{\lvb}}\r)\,dx \\
       &\hs+&\frac{1}{\vz(B,1)}\int_{B}\wz{\psi}\lf(x, \frac{|g(x)|}{\|g\|_{\wz{\psi},B}}\r)\vz(x,1)\,dx\\
       &\le& 2.
\end{eqnarray*}
Thus,
$$\frac{1}{\vz(B,1)}\int_{B}|f(x)||g(x)|\vz(x,1)\,dx\ls \|f\|_{\lvb}\|g\|_{\wz{\psi},B},$$
which completes the proof of Lemma \ref{lem2.4}.
\end{proof}

The following Lemmas \ref{lem2.5} and \ref{lem2.6}
are, respectively, \cite[Lemma 2.2]{lhy11} and \cite[Theorem 2.7]{lhy11}.

\begin{lemma}\label{lem2.5}
$\mathrm{(i)}$ $\aa_1(\rn)\st\aa_p(\rn)\st\aa_q(\rn)$ for
$1\le p\le q<\fz$.

$\mathrm{(ii)}$ If $\vz\in\aa_p(\rn)$ with $p\in(1,\fz)$, then
there exists $q\in(1,p)$ such that $\vz\in\aa_q(\rn)$.
\end{lemma}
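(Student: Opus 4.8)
The plan is to reduce both parts to the classical theory of Muckenhoupt $A_p$ weights applied, for each fixed $t\in(0,\fz)$, to the weight $\vz(\cdot,t)$, while carefully tracking that every quantitative constant depends only on the relevant \emph{uniform} $\aa_q$ constant and hence is independent of $t$; this uniformity is exactly what promotes the weight-by-weight statements to statements about $\vz$. (These facts are standard and may also be found, in essentially this form, in \cite[Lemma 2.2]{lhy11}.)

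For (i), I would fix $t\in(0,\fz)$, write $w:=\vz(\cdot,t)$, and treat the case $p<q$ (the case $p=q$ being trivial). When $1<p<q<\fz$, since $\frac{q'}{q}=\frac1{q-1}\le\frac1{p-1}=\frac{p'}{p}$, Jensen's inequality applied to the concave map $u\mapsto u^{(p-1)/(q-1)}$ gives, for every ball $B\st\rn$,
\begin{align*}
\lf\{\frac{1}{|B|}\int_B[w(y)]^{-q'/q}\,dy\r\}^{q/q'}\le\lf\{\frac{1}{|B|}\int_B[w(y)]^{-p'/p}\,dy\r\}^{p/p'},
\end{align*}
so the $\aa_q$ quantity of $w$ on $B$ is dominated by its $\aa_p$ quantity, and taking the supremum over $t$ and $B$ yields $\aa_p(\rn)\st\aa_q(\rn)$. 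When $p=1<q$, the condition $\vz\in\aa_1(\rn)$ says $\frac{1}{|B|}\int_Bw(x)\,dx\le C\,\essinf_{y\in B}w(y)$ for every ball $B$; hence $[w(y)]^{-q'/q}\le[\essinf_{z\in B}w(z)]^{-q'/q}$ for a.e.\ $y\in B$, and multiplying through shows $w$ satisfies the $\aa_q$ condition with a constant depending only on the $\aa_1$ constant of $\vz$. Combining these proves $\aa_1(\rn)\st\aa_p(\rn)\st\aa_q(\rn)$.

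For (ii), I would fix $t\in(0,\fz)$, put $w:=\vz(\cdot,t)$ with $\vz\in\aa_p(\rn)$, $p\in(1,\fz)$, and set $\sigma:=w^{-1/(p-1)}$. Rewriting the $\aa_p$ condition shows precisely that $\sigma$ satisfies the $\aa_{p'}$ condition, with constant equal to the $\aa_p$ constant of $\vz$ raised to the fixed power $1/(p-1)$; in particular $\sigma\in\aa_{p'}(\rn)\st\aa_\fz(\rn)$ with a bound uniform in $t$. By the reverse H\"older inequality for $\aa_\fz$ weights, there exist $\ez\in(0,\fz)$ and a positive constant $C$, both depending only on this uniform bound and hence independent of $t$, such that
\begin{align*}
\lf\{\frac{1}{|B|}\int_B[\sigma(x)]^{1+\ez}\,dx\r\}^{1/(1+\ez)}\le\frac{C}{|B|}\int_B\sigma(x)\,dx
\end{align*}
for every ball $B\st\rn$. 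Then I would set $q:=1+\frac{p-1}{1+\ez}$, so that $q\in(1,p)$ and $(1+\ez)(q-1)=p-1$; since $[w(y)]^{-1/(q-1)}=[\sigma(y)]^{1+\ez}$, the displayed inequality gives
\begin{align*}
\lf\{\frac{1}{|B|}\int_B[w(y)]^{-1/(q-1)}\,dy\r\}^{q-1}
&=\lf[\lf\{\frac{1}{|B|}\int_B[\sigma(y)]^{1+\ez}\,dy\r\}^{1/(1+\ez)}\r]^{p-1}\\
&\le C^{p-1}\lf\{\frac{1}{|B|}\int_B[w(y)]^{-1/(p-1)}\,dy\r\}^{p-1}.
\end{align*}
Multiplying both sides by $\frac{1}{|B|}\int_Bw(x)\,dx$ and taking the supremum over all $t\in(0,\fz)$ and all balls $B$ then shows $\vz\in\aa_q(\rn)$ with $q\in(1,p)$.

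The only real obstacle I anticipate is the bookkeeping of uniformity in $t$: one must invoke the \emph{quantitative} forms of two classical facts — the duality $w\in A_p\Leftrightarrow w^{-1/(p-1)}\in A_{p'}$ with comparable constants, and the self-improving reverse H\"older inequality for $A_\fz$ weights with exponent $\ez$ and constant $C$ controlled only by the $A_\fz$ constant — so that the resulting $q$ and the $\aa_q$ constant of $\vz$ do not degenerate as $t$ ranges over $(0,\fz)$. Everything else is a routine reorganization of the Muckenhoupt theory.
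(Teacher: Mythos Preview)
Your proposal is correct. The paper does not give its own proof of this lemma; it simply records that Lemmas \ref{lem2.5} and \ref{lem2.6} are, respectively, \cite[Lemma 2.2]{lhy11} and \cite[Theorem 2.7]{lhy11}. Your argument supplies exactly the standard details underlying that citation: for (i) the Jensen/H\"older monotonicity of the $A_p$ condition, and for (ii) the open property via the quantitative reverse H\"older inequality applied to $\sigma=\vz(\cdot,t)^{-1/(p-1)}$, with the constants tracked so that the resulting exponent $q=1+(p-1)/(1+\ez)$ and the $\aa_q$ bound are uniform in $t$. There is nothing to correct; this is the expected proof and it matches the approach of the reference the paper invokes.
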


\begin{lemma}\label{lem2.6}
Let $\wz p_0,\,\wz p_1\in (0,\fz)$, $\wz p_0<\wz p_1$ and $\vz$ be a
growth function with uniformly lower type $p_0$ and
uniformly upper type $p_1$.
If $0<\wz p_0<p_0\le p_1<\wz p_1<\fz$
and $T$ is a sublinear
operator defined on $L^{\wz p_0}_{\vz(\cdot,1)}(\rn)+L^{\wz p_1}_{\vz(\cdot,1)}(\rn)$
satisfying that, for $i\in\{1,2\}$, all $\az\in(0,\fz)$ and $t\in(0,\fz)$,
\begin{eqnarray*}
\vz(\{x\in\rn:\ |Tf(x)|>\az\},t)\le C_i\az^{-\wz p_i}\int_\rn|f(x)|^{\wz p_i}\vz(x,t)\,dx,
\end{eqnarray*}
where $C_i$ is a positive constant independent of $f$,
$t$ and $\az$. Then $T$ is bounded on
$L^\vz(\rn)$ and, moreover, there exists a positive constant $C$
such that, for all $f\in L^\vz(\rn)$,
\begin{eqnarray*}
\int_\rn\vz(x,|Tf(x)|)\,dx\le C\int_\rn\vz(x,|f(x)|)\,dx.
\end{eqnarray*}
\end{lemma}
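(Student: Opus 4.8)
\emph{Proof proposal.} The plan is to establish Lemma \ref{lem2.6} by a Musielak--Orlicz analogue of the Marcinkiewicz interpolation theorem; the one genuinely new ingredient compared with the classical case is that the two weak-type hypotheses hold \emph{uniformly in the parameter $t$}, which makes it legitimate to pick a different value of $t$ at each dyadic level.

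First I would reduce everything to a dyadic ``distribution-modular'' estimate by recording the elementary inequality
\[
\int_\rn\vz(x,|g(x)|)\,dx\ls\sum_{k\in\zz}\vz\lf(\lf\{x\in\rn:\ |g(x)|>2^k\r\},2^k\r),
\]
valid for every measurable $g$: decomposing $\rn$ according to $2^k<|g(x)|\le2^{k+1}$, the monotonicity of $\vz(x,\cdot)$ and its uniformly upper type $p_1$ replace $\vz(x,|g(x)|)$ by a constant multiple of $\vz(x,2^k)$ on the $k$-th piece. Applying this with $g=Tf$ reduces the modular bound for $Tf$ to estimating $\vz(\{|Tf|>2^k\},2^k)$ for each $k\in\zz$.

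Next, for each $k\in\zz$ I would perform the usual Calder\'on--Zygmund splitting $f=f_k^0+f_k^1$ with $f_k^0:=f\chi_{\{|f|>2^k\}}$ and $f_k^1:=f-f_k^0$. Sublinearity of $T$ gives, for some absolute $\kz\ge1$,
\[
\lf\{x\in\rn:\ |Tf(x)|>2^{k+1}\r\}\st\lf\{|Tf_k^0|>2^k/\kz\r\}\cup\lf\{|Tf_k^1|>2^k/\kz\r\};
\]
using once more the uniformly upper type $p_1$ to pass from level $2^{k+1}$ to level $2^k$ inside $\vz$, and then invoking the two weak-type hypotheses with the choice $t=2^k$, one gets
\begin{eqnarray*}
\vz\lf(\{|Tf|>2^{k+1}\},2^{k+1}\r)
&\ls&(2^k)^{-\wz p_0}\int_{\{|f|>2^k\}}|f(x)|^{\wz p_0}\vz(x,2^k)\,dx\\
&&+\,(2^k)^{-\wz p_1}\int_{\{|f|\le2^k\}}|f(x)|^{\wz p_1}\vz(x,2^k)\,dx.
\end{eqnarray*}
Summing in $k$ and interchanging sum and integral, the first term becomes $\int_\rn|f(x)|^{\wz p_0}\sum_{k:\,2^k<|f(x)|}(2^k)^{-\wz p_0}\vz(x,2^k)\,dx$; on the range $2^k<|f(x)|$ the uniformly lower type $p_0$ yields $\vz(x,2^k)\ls(2^k/|f(x)|)^{p_0}\vz(x,|f(x)|)$, so that, since $p_0-\wz p_0>0$, the inner geometric series is comparable to its top term and the whole expression is $\ls\int_\rn\vz(x,|f(x)|)\,dx$. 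Symmetrically, the second term is handled with the uniformly upper type $p_1$ (giving $\vz(x,2^k)\ls(2^k/|f(x)|)^{p_1}\vz(x,|f(x)|)$ for $2^k\ge|f(x)|$) together with $p_1-\wz p_1<0$, again producing $\ls\int_\rn\vz(x,|f(x)|)\,dx$. Combined with the reduction above, this proves the modular inequality $\int_\rn\vz(x,|Tf(x)|)\,dx\ls\int_\rn\vz(x,|f(x)|)\,dx$. To pass from this to boundedness on $L^{\vz}(\rn)$, I would apply the modular inequality to $f/\|f\|_{L^{\vz}(\rn)}$, obtaining $\int_\rn\vz(x,|Tf(x)|/\|f\|_{L^{\vz}(\rn)})\,dx\le C$, and then rescale by a large constant $\lz\ge1$, using the uniformly lower type $p_0$ in the form $\vz(x,s/\lz)\ls\lz^{-p_0}\vz(x,s)$ to drive the modular below $1$; this gives $\|Tf\|_{L^{\vz}(\rn)}\ls\|f\|_{L^{\vz}(\rn)}$.

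The main obstacle is the summation step: one must cut the $k$-sums precisely at $2^k\sim|f(x)|$ and exploit that the \emph{strict gap} $\wz p_0<p_0\le p_1<\wz p_1$ between the interpolation exponents makes the two geometric series summable in opposite directions; keeping the uniformly lower/upper type constants under control, and making sure the weak-type constants $C_i$ are used only through their independence of $t$ (so that the choice $t=2^k$ is allowed), is where the bookkeeping is most easily mishandled.
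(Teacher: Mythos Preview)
Your argument is correct. The paper does not actually prove Lemma~\ref{lem2.6}; it simply quotes it as \cite[Theorem 2.7]{lhy11}. What you have written is precisely the Musielak--Orlicz Marcinkiewicz interpolation that underlies that reference: the dyadic reduction $\int_\rn\vz(x,|Tf(x)|)\,dx\ls\sum_{k\in\zz}\vz(\{|Tf|>2^k\},2^k)$, the level-$2^k$ splitting $f=f_k^0+f_k^1$, the application of the two weak-type estimates at the moving parameter $t=2^k$, and the summation using the strict gap $\wz p_0<p_0\le p_1<\wz p_1$ together with the uniformly lower/upper type bounds. All steps check out, including the final passage from the modular inequality to the Luxembourg-norm bound via the uniformly lower type $p_0$. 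One minor point you left implicit is that $f\in L^\vz(\rn)$ forces $f\chi_{\{|f|>1\}}\in L^{\wz p_0}_{\vz(\cdot,1)}(\rn)$ and $f\chi_{\{|f|\le1\}}\in L^{\wz p_1}_{\vz(\cdot,1)}(\rn)$ (again by the type inequalities), so $Tf$ is well defined; this is routine and does not affect the argument.
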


By applying Lemmas \ref{lem2.5} and \ref{lem2.6},
we have the following boundedness of $\sa$ and $\ga$ on $\lv$.

\begin{proposition}\label{pro-vz}
Let $\vz$ be a growth function with $1<p_0\le p_1<\fz$,
$\vz\in\aa_{p_0}(\rn)$, $\az\in(0,1]$
and $\lz>\min\{\max\{2,\,p_1\},3+2\az/n\}$.
Then there exists a positive constant C such that, for all $f\in L^\vz(\rn)$,
$$\int_\rn\vz(x,\sa(f)(x))\,dx\le C\int_\rn\vz(x,|f(x)|)\,dx$$
and
$$\int_\rn\vz(x,\ga(f)(x))\,dx\le C\int_\rn\vz(x,|f(x)|)\,dx.$$
\end{proposition}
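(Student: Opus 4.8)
The plan is to reduce the weighted (Musielak–Orlicz) estimate for $\sa$ and $\ga$ to the weak-type hypotheses of the interpolation Lemma \ref{lem2.6}. Since $\vz$ has uniformly lower type $p_0>1$ and uniformly upper type $p_1<\fz$, I would first pick $\wz p_0,\wz p_1$ with $0<\wz p_0<p_0\le p_1<\wz p_1<\fz$ and $\wz p_0>1$; by Lemma \ref{lem2.6} it then suffices to prove, for $i\in\{1,2\}$, all $\gz\in(0,\fz)$ and $t\in(0,\fz)$,
\begin{eqnarray*}
\vz(\{x\in\rn:\ \sa(f)(x)>\gz\},t)\ls\gz^{-\wz p_i}\int_\rn|f(x)|^{\wz p_i}\vz(x,t)\,dx
\end{eqnarray*}
and similarly for $\ga$. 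Because $\vz\in\aa_{p_0}(\rn)$, Lemma \ref{lem2.5}(ii) gives $r\in(1,p_0)$ with $\vz\in\aa_r(\rn)$; choosing $\wz p_0$ close enough to $p_0$ so that $\wz p_0>r$, the weight $\vz(\cdot,t)$ lies in the classical $A_{\wz p_0}(\rn)$ (uniformly in $t$, with constant independent of $t$ by the definition of $\aa_r(\rn)$). Thus both $\wz p_0$ and $\wz p_1$ are admissible Muckenhoupt exponents for $\vz(\cdot,t)$.

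Next I would invoke Wilson's weighted $L^p_w$ boundedness of the intrinsic Lusin area function (\cite{w08}): for $p\in(1,\fz)$ and $w\in A_p(\rn)$, $\sa$ is bounded on $L^p_w(\rn)$. Applying this with $w=\vz(\cdot,t)$ and $p=\wz p_i$ yields the \emph{strong}-type bound $\int_\rn[\sa(f)(x)]^{\wz p_i}\vz(x,t)\,dx\ls\int_\rn|f(x)|^{\wz p_i}\vz(x,t)\,dx$, and Chebyshev's inequality immediately upgrades this to the weak-type inequality displayed above; moreover the $A_{\wz p_i}$ constants of $\vz(\cdot,t)$ are bounded uniformly in $t$, so the implied constants $C_i$ are independent of $t$ and $f$. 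For $\ga$ one cannot directly cite the same result for all $\lz$ in the stated range; here I would use the pointwise/change-of-aperture comparison for intrinsic functions (Wilson \cite{w07,w08}): the intrinsic $\ga$ with $\lz>3+2\az/n$ is controlled, via the standard slicing of $\rn\times(0,\fz)$ into dyadic cone annuli $\{|y-x|<2^k t\}$ together with the good-$\lz$/aperture growth estimate $\|S_{\az,2^k}(f)\|_{L^p_w}\ls 2^{kn(\,\cdot\,)}\|\sa(f)\|_{L^p_w}$, by a constant multiple of $\sa$ in $L^p_w$ provided $\lz n>$ (the aperture growth exponent); the numerical condition $\lz>3+2\az/n$ is exactly what makes the resulting geometric series in $k$ converge, after which the $\ga$ weak-type bound follows from that of $\sa$. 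The alternative branch $\lz>\max\{2,p_1\}$ is handled by the classical good-$\lz$ inequality relating $\ga$ and $\sa$ directly in $L^{\wz p_i}_{\vz(\cdot,t)}$.

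Having both weak-type estimates with $t$-uniform constants, Lemma \ref{lem2.6} applies verbatim and produces a positive constant $C$ with $\int_\rn\vz(x,\sa(f)(x))\,dx\le C\int_\rn\vz(x,|f(x)|)\,dx$ and the same for $\ga$, which is the assertion of Proposition \ref{pro-vz}. The main obstacle I anticipate is the $\ga$ estimate: one must track how the weighted $L^p_w$ norm of the truncated cone functions $S_{\az,2^k}$ grows in $k$ (this growth depends on $n$, on the $A_p$ character of $w$, and on whether $p\lessgtr 2$), and verify that the threshold $\min\{\max\{2,p_1\},3+2\az/n\}$ is precisely the break-even point for summability of $\sum_k 2^{-k(\lz n-\text{growth})}$ — this is where the otherwise routine interpolation argument becomes delicate, and it is also the place where the improvement over the range $\lz>p_1$ (claimed in the introduction) is gained. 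The rest — choosing $\wz p_0,\wz p_1$, checking uniform-in-$t$ Muckenhoupt constants, and applying Chebyshev — is routine.
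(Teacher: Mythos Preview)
Your proposal is correct and follows essentially the same route as the paper: pick $\wz p_0\in(1,p_0)$ via Lemma \ref{lem2.5}(ii) and any $\wz p_1\in(p_1,\fz)$, cite Wilson's weighted $L^p_w$ bound for $\sa$ to obtain strong-type (hence weak-type) estimates at both endpoints uniformly in $t$, and feed these into Lemma \ref{lem2.6}. The only cosmetic difference is in the $\ga$ branch with $\lz>3+2\az/n$: the paper uses Wilson's \emph{pointwise} aperture comparison $\saj(f)(x)\ls 2^{j(3n/2+\az)}\sa(f)(x)$ to get $\ga(f)(x)\ls\sa(f)(x)$ pointwise (so the modular inequality for $\ga$ follows from that for $\sa$ by monotonicity of $\vz(x,\cdot)$), whereas you phrase it as an $L^p_w$ summation; both work, but the pointwise route avoids having to track the $p$-dependence of the aperture growth in that branch. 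For the branch $\lz>\max\{2,p_1\}$ the paper invokes Wang's estimate $\|\saj(f)\|_{L^p_w}\ls(2^{jn}+2^{jnp/2})\|f\|_{L^p_w}$, sums to get $\|\ga(f)\|_{L^p_w}\ls\|f\|_{L^p_w}$ whenever $\lz>\max\{2,p\}$, and then applies Lemma \ref{lem2.6} with $\wz p_1$ chosen in $(p_1,\lz)$ --- exactly what your ``good-$\lz$'' remark amounts to.
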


\begin{proof}
For $\az\in(0,1]$, $p\in (1,\fz)$ and $w\in A_p(\rn)$,
it was proved in \cite[Theorem 7.2]{w08} that
$$\|\sa(f)\|_{L_w^p(\rn)}\ls \|f\|_{L_w^p(\rn)}.$$
Since $\vz\in \aa_{p_0}(\rn)$ and $p_0\in(1,\fz)$,
by Lemma \ref{lem2.5}${\rm (ii)}$, there exists
some $\wz p_0\in(1,p_0)$ such that $\vz\in\aa_{\wz p_0}(\rn)$
and hence, for all $t\in (0,\fz)$, it holds true that
\begin{eqnarray}\label{eq(1)}
\int_{\rn}[\sa(f)(x)]^{\wz p_0}\vz(x,t)\,dx\ls \int_{\rn}|f(x)|^{\wz p_0}\vz(x,t)\,dx.
\end{eqnarray}
On the other hand,
by the fact that, for any $\wz p_1\in(p_1,\fz)$,
$\vz(x,t)\in \aa_{\wz p_0}(\rn)\st\aa_{\wz p_1}(\rn)$
(see Lemma \ref{lem2.5}{\rm (i)}), we have
\begin{eqnarray}\label{eq(2)}
\int_{\rn}[\sa(f)(x)]^{\wz p_1}\vz(x,t)\,dx\ls \int_{\rn}|f(x)|^{\wz p_1}\vz(x,t)\,dx.
\end{eqnarray}
From \eqref{eq(1)}, \eqref{eq(2)} and Lemma \ref{lem2.6}, we deduce that
\begin{eqnarray}\label{eq-sa}
\int_\rn\vz(x,\sa(f)(x))\,dx\ls\int_\rn\vz(x,|f(x)|)\,dx.
\end{eqnarray}

For $\ga$, by the definition, we know that, for all $x\in\rn$,
\begin{eqnarray*}
[\ga(f)(x)]^2
&&= \int_0^{\fz}\int_{|x-y|<t}\lf(\frac{t}{t+|x-y|}\r)^{\lz n}
 \lf[A_{\az}(f)(y,t)\r]^2\dytn\\
&& \hs+\sum_{j=1}^\fz \int_{0}^{\fz}\int_{2^{j-1}t\le|x-y|<2^jt}\cdots\\
&&\ls[\sa(f)(x)]^2+\sum_{j=1}^{\fz}2^{-j\lz n}[\saj(f)(x)]^2.
\end{eqnarray*}
Thus, for all $x\in\rn$, it holds true that
\begin{eqnarray}\label{eq-glz}
\ga(f)(x)\ls\sa(f)(x)+\sum_{j=1}^{\fz}2^{-j\lz n/2}\saj(f)(x).
\end{eqnarray}
In \cite[Exericise 6.2]{w08}, Wilson proved that, for all $x\in\rn$,
$$\saj(f)(x)\ls 2^{j(\frac{3n}{2}+\az)}\sa(f)(x),$$
where the implicit positive constant depends only on $n$ and $\az$.
Hence, for all $x\in\rn$, if $\lz>3+2\az/n$, we have
\begin{eqnarray*}
\ga(f)(x)\ls\lf[1+\sum_{j=1}^{\fz}2^{-\frac{jn}{2}(\lz-3-\frac{2\az}{n})}\r]
\sa(f)(x)\ls\sa(f)(x),
\end{eqnarray*}
which, together with \eqref{eq-sa}
and the nondecreasing property of $\vz(x,\cdot)$
for all $x\in\rn$, implies that
$$\int_\rn\vz(x,\ga(f)(x))\,dx\ls\int_\rn\vz(x,|f(x)|)\,dx.$$

On the other hand, by \cite[Lemmas 4.1, 4.2 and 4.3]{w12}, we know that,
for all $p\in(1,\fz)$, $w\in A_p(\rn)$ and $j\in\nn$,
$$\|\saj(f)\|_{L_w^p(\rn)}\ls(2^{jn}+2^{jnp/2})\|f\|_{L_w^p(\rn)},$$
which, together with \eqref{eq-glz}, implies that,
if $\lz>\max\{2,\,p\}$, then
$$\|\ga(f)\|_{L^p_w(\rn)}\ls \|f\|_{L^p_w(\rn)}.$$
By this and Lemma \ref{lem2.6}, we further see that,
if $\lz>\max\{2,p_1\}$, then
\begin{eqnarray*}
\int_{\rn}\vz(x,\ga(f)(x))\,dx \ls\int_{\rn}\vz(x,|f(x)|)\,dx,
\end{eqnarray*}
which completes the proof of Proposition \ref{pro-vz}.
\end{proof}

One of the main results of this section is as follows.

\begin{theorem}\label{t2.1}
Let $\az\in(0,1]$, $\vz$ be a growth function with $1<p_0\le p_1<\fz$,
$\vz\in\aa_{p_0}(\rn)$ and $\phi : (0,\fz)\to (0,\fz)$ be nondecreasing.
If there exists a positive constant $C$ such that,
for all $r\in (0,\fz)$,
$$\int_r^{\fz}\frac{1}{\phi(t)t}\,dt\le C\frac{1}{\phi(r)},$$
then there exists a positive constant $\wz{C}$ such that,
for all $f\in\mor$,
$$\|\sa(f)\|_{\mor}\le \wz{C}\|f\|_{\mor}.$$
\end{theorem}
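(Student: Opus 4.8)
The plan is to estimate $\|\sa(f)\|_\mor$ ball by ball. Fix a ball $B:=B(x_0,r_B)\st\rn$ and decompose $f=f_1+f_2$ with $f_1:=f\chi_{2B}$ and $f_2:=f\chi_{(2B)^{\com}}$. Since $A_\az(g+h)\le A_\az(g)+A_\az(h)$ pointwise, the operator $\sa$ is sublinear and positively homogeneous, and $\|\cdot\|_{\lvb}$ is a quasi-norm with constant independent of $B$ (Remark \ref{rem-mor}(i)); thus it suffices to prove $\phi(\vz(B,1))\|\sa(f_i)\|_{\lvb}\ls\|f\|_\mor$ for $i=1,2$, with implicit constants independent of $B$.

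For the \emph{local part}, I would use the $\lv$-boundedness of $\sa$ from Proposition \ref{pro-vz} (available because $\vz\in\aa_{p_0}(\rn)$). Setting $\mu:=\|f\|_{\vz,2B}$ and applying Proposition \ref{pro-vz} to $f_1/\mu\in\lv$, then using the homogeneity of $\sa$, Lemma \ref{lem2.3}, and the doubling property $\vz(2B,1)\sim\vz(B,1)$ (valid since $\vz(\cdot,1)$ is a Muckenhoupt weight), I would obtain $\frac1{\vz(B,1)}\int_B\vz(x,\sa(f_1)(x)/\mu)\,dx\ls1$, whence $\|\sa(f_1)\|_{\lvb}\ls\mu$ by Lemma \ref{lem2.2}(i). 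Since $\phi$ is nondecreasing and $\vz(B,1)\le\vz(2B,1)$, this gives $\phi(\vz(B,1))\|\sa(f_1)\|_{\lvb}\ls\phi(\vz(2B,1))\|f\|_{\vz,2B}\le\|f\|_\mor$.

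The \emph{tail part} is the core. I would first establish the pointwise estimate
$$\sa(f_2)(x)\ls\sum_{k=0}^{\fz}\frac1{|2^{k+2}B|}\int_{2^{k+2}B}|f(z)|\,dz,\qquad x\in B.$$
To prove this I would use only that each $\tz\in\ca$ is supported in the closed unit ball with $|\tz|\le1$, so that $A_\az(f_2)(y,t)\le t^{-n}\int_{\{z:\,|z-y|<t,\ z\notin 2B\}}|f(z)|\,dz$; a support check shows the integrand in the definition of $\sa(f_2)(x)$ vanishes unless $t>r_B/2$, in which case $z\in 2^{k+2}B$ whenever $t\in[2^{k-1}r_B,2^kr_B)$. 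Carrying out the (elementary) $y$- and $t$-integrations and then using $\ell^2\hookrightarrow\ell^1$ yields the displayed inequality. Next, to bound each average I would apply the generalized H\"older inequality (Lemma \ref{lem2.4}) to the pair $(f,\,1/\vz(\cdot,1))$ on a ball $Q$, combined with the auxiliary bound $\|1/\vz(\cdot,1)\|_{\wz\psi,Q}\ls|Q|/\vz(Q,1)$ — which I would derive from $\wz\psi(\cdot,1)\ls1$ (Lemma \ref{c-f-p}(i)), the type estimates of Lemma \ref{c-f-p}(ii), and the hypothesis $\vz\in\aa_{p_0}(\rn)$ — to get $\frac1{|Q|}\int_Q|f|\ls\|f\|_{\vz,Q}\le\phi(\vz(Q,1))^{-1}\|f\|_\mor$. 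Since the right-hand side of the displayed estimate is a constant on $B$ and $\|\chi_B\|_{\lvb}=1$, it then remains to verify $\phi(\vz(B,1))\sum_{k\ge0}\phi(\vz(2^{k+2}B,1))^{-1}\ls1$; for this I would invoke the reverse doubling inequality $\vz(2^kB,1)\gs 2^{k\dz}\vz(B,1)$ for some $\dz>0$ (a consequence of $\vz(\cdot,1)$ being an $A_\fz(\rn)$ weight), the monotonicity of $\phi$, and, after splitting off finitely many initial terms, a comparison of the remaining geometric-type sum with the integral $\int_{\vz(B,1)}^{\fz}\frac{dt}{\phi(t)t}$, which is $\ls\phi(\vz(B,1))^{-1}$ by hypothesis.

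I expect the main obstacle to lie in the tail part, specifically in two bookkeeping points: the auxiliary Orlicz-norm bound $\|1/\vz(\cdot,1)\|_{\wz\psi,Q}\ls|Q|/\vz(Q,1)$, where the complementary-function machinery and $\vz\in\aa_{p_0}(\rn)$ are crucial; and the summation of $\sum_{k}\phi(\vz(2^kB,1))^{-1}$, where the reverse-doubling growth of the weight $\vz(\cdot,1)$ must be matched against the integral hypothesis on $\phi$ (this hypothesis playing the role that $\kz\in(0,1)$, equivalently $s\in(0,1/p)$, plays in the classical Morrey setting). The local part is routine once Proposition \ref{pro-vz} is in hand.
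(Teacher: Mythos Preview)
Your proposal is correct and follows essentially the same route as the paper: the same local/tail decomposition, the same use of Proposition~\ref{pro-vz} for the local part, and the same H\"older-plus-complementary-function machinery (Lemmas~\ref{c-f-p}, \ref{lem2.2}, \ref{lem2.4}) followed by reverse doubling of $\vz(\cdot,1)$ and the integral hypothesis on $\phi$ to sum the tail. The only cosmetic differences are that the paper derives the pointwise tail bound via an annular decomposition in $z$ together with Minkowski's inequality (rather than your dyadic splitting in $t$) and phrases reverse doubling through the reverse-H\"older class; note also that your ``$\ell^2\hookrightarrow\ell^1$'' should read $\ell^1\hookrightarrow\ell^2$ (i.e., $\|a\|_{\ell^2}\le\|a\|_{\ell^1}$).
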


\begin{proof}
Let $B:=B(x_0,r_{B})$ be any ball of $\rn$,
where $x_0\in\rn$ and $r_B\in (0,\fz)$. Decompose
$$f=f\chi_{2B}+f\chi_{(2B)^\com}=:f_1+f_2.$$
Since, for any $\az\in(0,1]$, $\sa$ is sublinear,
we see that, for all $x\in B$,
$$\sa(f)(x)\le \sa(f_1)(x)+\sa(f_2)(x).$$

Let $\mu:=\|f\|_{\vz,2B}\neq0$.
By Proposition \ref{pro-vz} and Lemma \ref{lem2.3}, we conclude that
\begin{eqnarray*}
\frac{1}{\vz(B,1)}\int_{B}\vz\lf(x, \frac{\sa(f_1)(x)}{\mu}\r)\,dx
&\ls& \frac{1}{\vz(B,1)}\int_{\rn}\vz\lf(x, \frac{|f_1(x)|}{\mu}\r)\,dx\\
&\sim& \frac{1}{\vz(B,1)}\int_{2B}\vz\lf(x, \frac{|f(x)|}{\mu}\r)\,dx \ls 1.
\end{eqnarray*}
From this and Lemma \ref{lem2.2}{\rm(i)},
we deduce that $\|\sa(f_1)\|_{\lvb}\ls\|f\|_{\vz,2B}$.
Therefore,
\begin{eqnarray}\label{eq2.0}
\phi(\vz(B,1))\|\sa(f_1)\|_{\lvb}
&\ls&\phi(\vz(B,1))\|f\|_{\vz,2B}\noz\\
&\ls&\frac{\phi(\vz(B,1))}{\phi(\vz(2B,1))}\|f\|_{\mor}\ls\|f\|_{\mor}.
\end{eqnarray}

Next, we turn to estimate $\sa(f_2)$.
For any $\tz\in\ca$ and 
$$(y,t)\in\bgz(x):=\{(y,t)\in\rn\times(0,\fz):\ |y-x|<t\},$$
we have
\begin{eqnarray*}
\sup_{\tz\in\ca}|f_2\ast\tz_t(y)|
&=&\sup_{\tz\in\ca}\lf|\int_{(2B)^\com}\tz_{t}(y-z)f(z)\,dz\r|\\
&\le& \sum_{k=1}^{\fz}\sup_{\tz\in\ca}\lf|\int_{2^{k+1}B\bh 2^kB}\tz_t(y-z)f(z)\,dz\r|.
\end{eqnarray*}
For any $k\in\nn$, $x\in B$, $(y,t)\in\Gamma(x)$ and
$z\in (2^{k+1}B\backslash 2^kB)\cap B(y,t)$, it holds true that
\begin{eqnarray}\label{eq(3)}
2t>|x-y|+|y-z|\geq |x-z|\geq |z-x_0|-|x-x_0|>2^{k-1}r_B.
\end{eqnarray}
By this, the fact that $\tz\in\ca$ is uniformly bounded
and the Minkowski inequality, we know that,
for all $x\in B$,
\begin{eqnarray}\label{eq2.1}
\qquad && \sa(f_2)(x)\noz\\
&&\hs\le\lf\{\int_0^\fz\int_{|x-y|<t}\lf[\sum_{k=1}^{\fz}\sup_{\tz\in\ca}\lf|
 \int_{2^{k+1}B\bh 2^kB}\tz_t(y-z)f(z)\,dz\r|\r]^2\dytn\r\}^{1/2}\noz\\
&&\hs\ls\sum_{k=1}^{\fz}\lf\{\int_{2^{k-2}r_B}^\fz\int_{|x-y|<t}\lf[
 t^{-n}\int_{2^{k+1}B\bh 2^kB}|f(z)|\,dz\r]^2\dytn\r\}^{1/2}\noz\\
&&\hs\ls\sum_{k=1}^{\fz}\int_{2^{k+1}B\bh 2^kB}|f(z)|\,dz
 \lf(\int_{2^{k-2}r_B}^{\fz}\,\frac{dt}{t^{2n+1}}\r)^{1/2}\noz\\
&&\hs\ls\sum_{k=1}^{\fz}\frac{1}{|2^{k+1}B|}\int_{2^{k+1}B\bh 2^kB}|f(z)|\,dz.
\end{eqnarray}
From this and Lemma \ref{lem2.4}, it follows that, for all $x\in B$,
\begin{eqnarray}\label{eq2.2}
\sa(f_2)(x)
\ls\sum_{k=1}^{\fz}\frac{\vz(2^{k+1}B,1)}{|2^{k+1}B|}
 \|f\|_{\vz,2^{k+1}B}\lf\|\frac{1}{\vz(\cdot,1)}\r\|_{\wz{\psi},2^{k+1}B}.
\end{eqnarray}
By $\vz\in\aa_{p_0}(\rn)\st\aa_{p_1}(\rn)$
and Lemma \ref{c-f-p},
we conclude that
\begin{eqnarray*}
&&\frac{1}{\vz(2^{k+1}B,1)}\int_{2^{k+1}B}\wz{\psi}\lf(x,
  \frac{\vz(2^{k+1}B,1)}{|2^{k+1}B|\vz(x,1)}\r)\vz(x,1)\,dx\\
&&\hs\ls \frac{1}{\vz(2^{k+1}B,1)}\int_{2^{k+1}B}
 \lf\{\lf[\frac{\vz(2^{k+1}B,1)}{|2^{k+1}B|\vz(x,1)}\r]^{p'_1}+
 \lf[\frac{\vz(2^{k+1}B,1)}{|2^{k+1}B|\vz(x,1)}\r]^{p'_0}\r\}\vz(x,1)\,dx\\
&&\hs \sim \lf[\frac{1}{|2^{k+1}B|}\int_{2^{k+1}B}\vz(x,1)\,dx\r]^{p'_0-1}
   \frac{1}{|2^{k+1}B|}\int_{2^{k+1}B}[\vz(x,1)]^{1-p'_0}\,dx\\
&& \hs\hs + \lf[\frac{1}{|2^{k+1}B|}\int_{2^{k+1}B}\vz(x,1)\,dx\r]^{p'_1-1}
   \frac{1}{|2^{k+1}B|}\int_{2^{k+1}B}[\vz(x,1)]^{1-p'_1}\,dx\ls 1.
\end{eqnarray*}
From this and Lemma \ref{lem2.2}{\rm(ii)}, we deduce that
\begin{eqnarray}\label{eq2.c}
\frac{\vz(2^{k+1}B,1)}{|2^{k+1}B|}\lf\|\frac{1}{\vz(\cdot,1)}\r\|_{\wz{\psi},2^{k+1}B} \ls 1,
\end{eqnarray}
which, together with \eqref{eq2.2}, further implies that, for all $x\in B$,
\begin{eqnarray}\label{eq2.4x}
\phi(\vz(B,1))\sa(f_2)(x)
&\ls&\sum_{k=1}^{\fz}\frac{\phi(\vz(B,1))}{\phi(\vz(2^{k+1}B,1))}\phi(\vz(2^{k+1}B,1))\|f\|_{\vz,2^{k+1}B}\noz\\
&\ls&\sum_{k=1}^{\fz}\frac{\phi(\vz(B,1))}{\phi(\vz(2^{k+1}B,1))}\|f\|_{\mor}.
\end{eqnarray}
Recall that, for $r\in(1,\fz)$, a weight function $w$ is said to satisfy the
\emph{reverse H\"older inequality,} denoted by $w\in {\rm RH}_r(\rn)$,
if there exists a positive constant $C$ such that,
for every ball $B\st\rn$,
$$\lf\{\frac{1}{|B|}\int_B [w(x)]^r\,dx\r\}^{1/r}\le
C \frac{1}{|B|}\int_B w(x)\,dx.$$
Since $\vz(\cdot,1)\in A_{p_0}(\rn)$, we know that there exists some $r\in(1,\fz)$
such that $\vz(\cdot,1)\in {\rm RH}_r(\rn)$,
which, together with \cite[p.\,109]{gw74}, further implies that
there exists a positive constant $\wz C$ such that, for any ball $B\st\rn$ and $k\in\nn$,
$$\frac{\vz(2^kB,1)}{\vz(2^{k+1}B,1)}\le {\wz C}\lf(\frac{|2^kB|}{|2^{k+1}B|}\r)^{(r-1)/r}.$$
By choosing $j_0\in (\frac{r}{n(r-1)}\log{\wz C},\fz)\cap\nn$,
we see that, for all $k\in\nn$,
$$\frac{\vz(2^{(k+1)j_0}B,1)}{\vz(2^{kj_0}B,1)}\geq2^{nj_0(r-1)/r}/{\wz C}>1,$$
which further implies that
\begin{eqnarray}\label{j0}
\log\lf(\frac{\vz(2^{(k+1)j_0}B,1)}{\vz(2^{kj_0}B,1)}\r)\gs 1.
\end{eqnarray}
By \eqref{j0} and the assumptions of $\phi$, we know that
\begin{eqnarray}\label{eq-phi}
\sum_{k=1}^\fz\frac{\phi(\vz(B,1))}{\phi(\vz(2^{k+1}B,1))}
&\le& \sum_{l=0}^\fz\sum_{i=lj_0+1}^{(l+1)j_0}\frac{\phi(\vz(B,1))}{\phi(\vz(2^iB,1))}\noz\\
&\ls& \sum_{i=1}^{j_0}\frac{\phi(\vz(B,1))}{\phi(\vz(2^iB,1))}
+j_0\sum_{l=1}^\fz\frac{\phi(\vz(B,1))}{\phi(\vz(2^{lj_0}B,1))}\noz\\
&\ls& 1 + \sum_{l=1}^\fz\frac{\phi(\vz(B,1))}{\phi(\vz(2^{lj_0}B,1))}
\int_{\vz(2^{(l-1)j_0}B,1)}^{\vz(2^{lj_0}B,1)}\dt\noz\\
&\ls& 1+\phi(\vz(B,1))\sum_{l=1}^\fz\int_{\vz(2^{(l-1)j_0}B,1)}^{\vz(2^{lj_0}B,1)}\frac{dt}{\phi(t)t}\noz\\
&\ls& 1+\phi(\vz(B,1))\int_{\vz(B,1)}^\fz\frac{1}{\phi(t)t}\,dt\ls 1.
\end{eqnarray}
From this and \eqref{eq2.4x}, we deduce that, for all $x\in B$,
\begin{eqnarray}\label{eq2.4}
\phi(\vz(B,1))\sa(f_2)(x)\ls \|f\|_\mor.
\end{eqnarray}
Therefore,
$$\frac{1}{\vz(B,1)}\int_{B}\vz\lf(x, \frac{\phi(\vz(B,1))\sa(f_2)(x)}{\|f\|_\mor}\r)\,dx \ls 1,$$
which, together with Lemma \ref{lem2.2}{\rm(i)},
further implies that
$$\phi(\vz(B,1))\|\sa(f_2)\|_{\lvb}\ls\|f\|_{\mor}.$$
This, combined with \eqref{eq2.0} and Remark \ref{rem-mor}{\rm(i)},
finishes the proof of Theorem \ref{t2.1}.
\end{proof}

For a growth function $\vz$ and a function $\phi:\mathbb{R}^n\times(0,\fz)\to(0,\fz)$,
the \emph{space  $\wz M^{\vz,\phi}(\rn)$} is defined by the same way as Definition~\ref{d-Mor},
via using $\phi(c_B,\vz(B,1))$ instead of $\phi(\vz(B,1))$,
where $c_B$ is the center of the ball $B$.
Then, by an argument similar to that used in the proof of Theorem~\ref{t2.1},
we have the following boundedness of $\sa$ on $\wz M^{\vz,\phi}(\rn)$, the details being omitted.

\begin{theorem}\label{t2.1v}
Let $\az\in(0,1]$, $\vz$ be a growth function with $1<p_0\le p_1<\fz$,
and $\vz\in\aa_{p_0}(\rn)$.
If there exists a positive constant $C$ such that,
for all $x\in\rn$ and $0<r\le s<\fz$,
$$
\int_r^{\fz}\frac{1}{\phi(x,t)t}\,dt\le C\frac{1}{\phi(x,r)}\ \mbox{ and }\
\phi(x,r)\le C \phi(x,s),
$$
then there exists a positive constant $\wz{C}$ such that,
for all $f\in\wz M^{\vz,\phi}(\rn)$,
$$\|\sa(f)\|_{\wz M^{\vz,\phi}(\rn)}\le \wz{C}\|f\|_{\wz M^{\vz,\phi}(\rn)}.$$
\end{theorem}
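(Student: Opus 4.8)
The plan is to rerun, essentially verbatim, the argument that proved Theorem~\ref{t2.1}, the only new ingredient being careful bookkeeping of the spatial variable in the first slot of $\phi$. The point that makes this painless is that the proof of Theorem~\ref{t2.1} only ever works with a ball $B:=B(x_0,r_B)$ and its \emph{concentric} dilates $2^kB$, all of which have the same center $x_0$; hence $c_{2^kB}=x_0$ for every $k$, and the first argument of $\phi$ stays frozen at $x_0$ throughout the whole estimate. Consequently the two hypotheses on $\phi$ are only ever used in the guise ``$\int_r^\fz\frac{dt}{\phi(x_0,t)t}\le C/\phi(x_0,r)$'' and ``$\phi(x_0,r)\le C\phi(x_0,s)$ for $0<r\le s<\fz$'', i.e.\ exactly the two properties of the one-variable $\phi$ exploited in Theorem~\ref{t2.1} (the second one compensating for the possible lack of monotonicity of $\phi(x_0,\cdot)$).

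First I would fix $B:=B(x_0,r_B)$, decompose $f=f\chi_{2B}+f\chi_{(2B)^\com}=:f_1+f_2$, and use the sublinearity of $\sa$. For the local part, setting $\mu:=\|f_1\|_{\vz,2B}$ and applying the $L^\vz$-boundedness of $\sa$ (Proposition~\ref{pro-vz}) together with Lemmas~\ref{lem2.3} and \ref{lem2.2}{\rm(i)} gives $\|\sa(f_1)\|_{\lvb}\ls\|f\|_{\vz,2B}$, exactly as in \eqref{eq2.0}. Multiplying by $\phi(x_0,\vz(B,1))$ and bounding it by $C\phi(x_0,\vz(2B,1))$ via the monotonicity hypothesis (using $\vz(B,1)\le\vz(2B,1)$) then yields $\phi(x_0,\vz(B,1))\|\sa(f_1)\|_{\lvb}\ls\phi(x_0,\vz(2B,1))\|f\|_{\vz,2B}\ls\|f\|_{\wz M^{\vz,\phi}(\rn)}$.

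Next I would treat $f_2$ as in \eqref{eq2.1}--\eqref{eq2.2}: splitting $(2B)^\com=\bigcup_{k\ge1}(2^{k+1}B\bh 2^kB)$, using $\supp\tz\st\{|x|\le1\}$, the geometric estimate \eqref{eq(3)}, the uniform boundedness of $\tz\in\ca$, Minkowski's inequality, the generalized H\"older inequality (Lemma~\ref{lem2.4}) and \eqref{eq2.c}, one obtains $\sa(f_2)(x)\ls\sum_{k\ge1}\|f\|_{\vz,2^{k+1}B}$ for all $x\in B$. Since $c_{2^{k+1}B}=x_0$, multiplication by $\phi(x_0,\vz(B,1))$ gives $\phi(x_0,\vz(B,1))\sa(f_2)(x)\ls\sum_{k\ge1}\frac{\phi(x_0,\vz(B,1))}{\phi(x_0,\vz(2^{k+1}B,1))}\|f\|_{\wz M^{\vz,\phi}(\rn)}$. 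To sum this series I would proceed as in \eqref{j0}--\eqref{eq-phi}: the reverse H\"older property of $\vz(\cdot,1)\in A_{p_0}(\rn)$ produces an integer $j_0$ with $\log(\vz(2^{(k+1)j_0}B,1)/\vz(2^{kj_0}B,1))\gs1$, and grouping the indices into blocks of length $j_0$ and invoking $\int_r^\fz\frac{dt}{\phi(x_0,t)t}\le C/\phi(x_0,r)$ with $r=\vz(B,1)$ yields $\sum_{k\ge1}\phi(x_0,\vz(B,1))/\phi(x_0,\vz(2^{k+1}B,1))\ls1$, hence $\phi(x_0,\vz(B,1))\sa(f_2)(x)\ls\|f\|_{\wz M^{\vz,\phi}(\rn)}$ on $B$.

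Finally, feeding the last bound into Lemma~\ref{lem2.2}{\rm(i)} gives $\phi(x_0,\vz(B,1))\|\sa(f_2)\|_{\lvb}\ls\|f\|_{\wz M^{\vz,\phi}(\rn)}$, and combining this with the $f_1$ estimate through the quasi-norm inequality for $\|\cdot\|_{\wz M^{\vz,\phi}(\rn)}$ (the analogue of Remark~\ref{rem-mor}{\rm(i)}) and taking the supremum over all balls $B$ completes the argument. I do not expect any genuine obstacle here: the whole subtlety is the observation that $\phi$ is sampled only at the common center $x_0$ of $B$ and all its dilates, so the spatial variable never moves and the one-variable bookkeeping of Theorem~\ref{t2.1} transfers unchanged; the one place that genuinely needs the new input is wherever the original proof silently used the monotonicity of $\phi$, which is now supplied by the hypothesis $\phi(x,r)\le C\phi(x,s)$.
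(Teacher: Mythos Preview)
Your proposal is correct and is exactly the approach the paper takes: the authors state Theorem~\ref{t2.1v} immediately after Theorem~\ref{t2.1} and simply say ``by an argument similar to that used in the proof of Theorem~\ref{t2.1}\ldots the details being omitted.'' Your observation that all the dilates $2^kB$ share the common center $x_0$, so that $\phi$ is only ever evaluated with first argument $x_0$, together with the fact that the extra hypothesis $\phi(x,r)\le C\phi(x,s)$ replaces every appeal to monotonicity of $\phi$ in \eqref{eq2.0} and \eqref{eq-phi}, is precisely the content the paper leaves implicit.
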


For example, let $\phi(x,r):=r^{\lambda(x)}$ for all $x\in\rn$ and
$r\in (0,\fz)$ and $$\inf_{x\in\rn}\lambda(x)>0.$$
Then $\phi$ satisfies the assumptions of Theorem \ref{t2.1v}.

Observe that, for all $x\in\rn$, $g_\az(f)(x)$ and $\sa(f)(x)$ are pointwise comparable
(see \cite[p.\,774]{w07}), which,
together with Theorem \ref{t2.1}, immediately implies the following
conclusion, the details being omitted.

\begin{corollary}\label{cor-g}
Let $\az\in(0,1]$, $\vz$ be a growth function with $1<p_0\le p_1<\fz$,
$\vz\in\aa_{p_0}(\rn)$ and $\phi : [0,\fz)\to [0,\fz)$ be nondecreasing.
If there exists a positive constant $C$ such that,
for all $r\in (0,\fz)$,
$$\int_r^{\fz}\frac{1}{\phi(t)t}\,dt\le C\frac{1}{\phi(r)},$$
then there exists a positive constant $\wz{C}$ such that,
for all $f\in\mor$,
$$\|g_\az(f)\|_{\mor}\le \wz{C}\|f\|_{\mor}.$$
\end{corollary}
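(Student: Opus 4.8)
The plan is to deduce the corollary immediately from Theorem \ref{t2.1} together with the pointwise comparison between $g_\az$ and $\sa$. Recall from \cite[p.\,774]{w07} that there exists a positive constant $C$, depending only on $n$ and $\az$, such that, for all $f\in L^1_\loc(\rn)$ and all $x\in\rn$,
$$g_\az(f)(x)\le C\,\sa(f)(x)$$
(in fact the two functions are pointwise comparable, but only this direction is needed here); without loss of generality we may assume $C\ge1$.

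First I would transfer this pointwise estimate to the local Luxembourg norm $\|\cdot\|_{\lvb}$. Fix a ball $B\st\rn$. If $\|\sa(f)\|_{\lvb}=0$, then $\sa(f)=0$ almost everywhere on $B$, hence so is $g_\az(f)$, and there is nothing to prove; so assume $\mu:=\|\sa(f)\|_{\lvb}\in(0,\fz)$. Since $\vz(x,\cdot)$ is nondecreasing for every $x\in\rn$, the pointwise bound gives, for almost every $x\in B$,
$$\vz\lf(x,\frac{g_\az(f)(x)}{C\mu}\r)\le\vz\lf(x,\frac{\sa(f)(x)}{\mu}\r),$$
so that, by Lemma \ref{lem2.3},
$$\frac{1}{\vz(B,1)}\int_B\vz\lf(x,\frac{g_\az(f)(x)}{C\mu}\r)\,dx\le\frac{1}{\vz(B,1)}\int_B\vz\lf(x,\frac{\sa(f)(x)}{\mu}\r)\,dx\le1;$$
equivalently, one may simply invoke Lemma \ref{lem2.2}{\rm(i)}. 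Hence $\|g_\az(f)\|_{\lvb}\le C\|\sa(f)\|_{\lvb}$.

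Multiplying this inequality by $\phi(\vz(B,1))$ and taking the supremum over all balls $B\st\rn$ yields $\|g_\az(f)\|_\mor\le C\|\sa(f)\|_\mor$. It then suffices to apply Theorem \ref{t2.1}, whose hypotheses on $\az$, $\vz$ and $\phi$ are precisely those assumed in the corollary, to obtain $\|\sa(f)\|_\mor\le\wz C\|f\|_\mor$; combining the last two displays completes the proof.

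Since the pointwise comparison of $g_\az$ and $\sa$ is already recorded in \cite{w07}, and the passage from a pointwise domination to the Morrey quasi-norm rests only on the monotonicity of $\vz(x,\cdot)$, there is essentially no genuine obstacle in this argument; the only points requiring a little care are the degenerate case $\|\sa(f)\|_{\lvb}=0$ and the harmless normalization $C\ge1$.
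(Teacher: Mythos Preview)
Your proof is correct and follows exactly the approach indicated in the paper: the paper itself notes that $g_\az(f)(x)$ and $\sa(f)(x)$ are pointwise comparable (citing \cite[p.\,774]{w07}) and that this, together with Theorem~\ref{t2.1}, immediately yields the corollary, the details being omitted. You have simply supplied those details, namely the routine passage from the pointwise bound to the local Luxembourg norm via the monotonicity of $\vz(x,\cdot)$.
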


Similarly, there exists a corollary similar to Corollary \ref{cor-g} of Theorem
\ref{t2.1v}, the details being omitted.

\begin{theorem}\label{t2.3}
Let $\az\in(0,1]$, $\vz$ be a growth function with $1<p_0\le p_1<\fz$,
$\vz\in\aa_{p_0}(\rn)$ and $\phi : [0,\fz)\to [0,\fz)$ be nondecreasing.
If $\lz>\min\{\max\{3,\,p_1\},3+2\az/n\}$
and there exists a positive constant $C$ such that,
for all $r\in (0,\fz)$,
$$\int_r^{\fz}\frac{1}{\phi(t)t}\,dt\le C\frac{1}{\phi(r)},$$
then there exists a positive constant $\wz{C}$ such that,
for all $f\in\mor$,
$$\|\ga(f)\|_{\mor}\le \wz{C}\|f\|_{\mor}.$$
\end{theorem}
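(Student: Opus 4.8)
\emph{Proof proposal.} The plan is to follow the proof of Theorem \ref{t2.1} almost verbatim, the only new ingredient being an aperture-dependent annular estimate for the varying-aperture area functions $\saj$, whose growth in $j$ is controlled precisely by the assumption $\lz>3$. Fix a ball $B:=B(x_0,r_B)$ with $x_0\in\rn$ and $r_B\in(0,\fz)$, and decompose $f=f\chi_{2B}+f\chi_{(2B)^\com}=:f_1+f_2$; since $\ga$ is sublinear, $\ga(f)(x)\le\ga(f_1)(x)+\ga(f_2)(x)$ for all $x\in B$.

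For the local part, I would first note that the hypothesis $\lz>\min\{\max\{3,p_1\},3+2\az/n\}$ implies $\lz>\min\{\max\{2,p_1\},3+2\az/n\}$, so Proposition \ref{pro-vz} applies and yields the Musielak--Orlicz $L^\vz(\rn)$-estimate $\int_\rn\vz(x,\ga(f_1)(x))\,dx\ls\int_\rn\vz(x,|f_1(x)|)\,dx$. Combining this with Lemma \ref{lem2.3} and Lemma \ref{lem2.2}{\rm(i)} exactly as in the derivation of \eqref{eq2.0} gives $\|\ga(f_1)\|_{\lvb}\ls\|f\|_{\vz,2B}$, and hence, by the monotonicity of $\phi$,
$$\phi(\vz(B,1))\|\ga(f_1)\|_{\lvb}\ls\frac{\phi(\vz(B,1))}{\phi(\vz(2B,1))}\|f\|_\mor\ls\|f\|_\mor.$$

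For the global part I would invoke the pointwise domination \eqref{eq-glz}, namely $\ga(f_2)(x)\ls\sa(f_2)(x)+\sum_{j=1}^\fz 2^{-j\lz n/2}\saj(f_2)(x)$. The first term is already handled in the proof of Theorem \ref{t2.1}, where it is shown that $\phi(\vz(B,1))\sa(f_2)(x)\ls\|f\|_\mor$ for all $x\in B$. For $\saj(f_2)(x)$ with $x\in B$, I would repeat the annular decomposition $(2B)^\com=\bigcup_{k\ge1}(2^{k+1}B\bh 2^kB)$ from \eqref{eq2.1}: if $z\in(2^{k+1}B\bh 2^kB)\cap B(y,2^jt)$ and $|x-y|<2^jt$, then $2^{j+1}t>|x-z|>2^{k-1}r_B$, so the $t$-integral is restricted to $t\gs 2^{k-j}r_B$, while the $y$-slice $\{|x-y|<2^jt\}$ now has measure $\sim(2^jt)^n$. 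Carrying out the computation as in \eqref{eq2.1} (the factor $(2^jt)^n$ contributes $2^{jn}$ under the square root, and $\int_{2^{k-j-2}r_B}^\fz t^{-2n-1}\,dt\sim(2^{k-j}r_B)^{-2n}$), I expect
$$\saj(f_2)(x)\ls 2^{3jn/2}\sum_{k=1}^\fz\frac{1}{|2^{k+1}B|}\int_{2^{k+1}B\bh 2^kB}|f(z)|\,dz.$$
Then, applying Lemma \ref{lem2.4} together with \eqref{eq2.c} exactly as in the passage from \eqref{eq2.1} to \eqref{eq2.2}, the inner sum is dominated by $\sum_{k\ge1}\|f\|_{\vz,2^{k+1}B}$; multiplying through by $\phi(\vz(B,1))$, summing first over $k$ via \eqref{eq-phi}, and then over $j$ using $\sum_{j\ge1}2^{-j\lz n/2}2^{3jn/2}=\sum_{j\ge1}2^{-jn(\lz-3)/2}<\fz$, I obtain $\phi(\vz(B,1))\ga(f_2)(x)\ls\|f\|_\mor$ for all $x\in B$. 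By Lemma \ref{lem2.2}{\rm(i)} this gives $\phi(\vz(B,1))\|\ga(f_2)\|_{\lvb}\ls\|f\|_\mor$, and combining the two parts with the quasi-norm property of $\|\cdot\|_\mor$ from Remark \ref{rem-mor}{\rm(i)} completes the argument.

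The main obstacle — indeed the only step requiring genuine work — is the aperture-dependent annular estimate for $\saj(f_2)$: one must keep careful track of how the enlarged cone $\{|y-x|<2^jt\}$ simultaneously lowers the truncation level of the $t$-integral (by a factor $2^{-j}$, hence inflates $(2^{k-j}r_B)^{-2n}$ by $2^{2jn}$) and enlarges the $y$-slice (by $2^{jn}$), producing the factor $2^{3jn/2}$. The summability in $j$ then forces exactly $\lz>3$, which, combined with the $L^\vz(\rn)$-boundedness threshold $\lz>\min\{\max\{2,p_1\},3+2\az/n\}$ from Proposition \ref{pro-vz}, is equivalent to the stated hypothesis $\lz>\min\{\max\{3,p_1\},3+2\az/n\}$; everything else parallels the proof of Theorem \ref{t2.1}.
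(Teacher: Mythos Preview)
Your proposal is correct and follows the paper's proof closely. The one difference worth noting is in how you handle the sum over $j$ for the global piece $f_2$: the paper first converts each pointwise bound $\phi(\vz(B,1))\saj(f_2)(x)\ls 2^{3jn/2}\|f\|_\mor$ into a Morrey-norm bound $\|\saj(f_2)\|_\mor\ls 2^{3jn/2}\|f\|_\mor$ and then sums these via the Aoki--Rolewicz $\gamma$-subadditivity of Remark~\ref{rem-mor}(i), obtaining $\|\ga(f_2)\|_\mor^\gamma\ls\|\sa(f_2)\|_\mor^\gamma+\sum_{j\ge1}2^{-j\gamma n(\lz-3)/2}\|f\|_\mor^\gamma$. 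You instead sum the pointwise estimates directly to reach $\phi(\vz(B,1))\ga(f_2)(x)\ls\|f\|_\mor$ for all $x\in B$ and only then apply Lemma~\ref{lem2.2}(i); this is slightly more streamlined since it bypasses the Aoki--Rolewicz detour altogether, and the $\lz>3$ threshold it produces matches the paper's. One small imprecision: in your annular computation you write $z\in B(y,2^jt)$, but since $\supp\tz_t\subset B(0,t)$ one actually has $z\in B(y,t)$; your conclusion $t\gs 2^{k-j}r_B$ is unaffected, and indeed coincides with the paper's \eqref{eq2*3}.
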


\begin{proof}
Fix any ball $B:=B(x_0,r_B)\st\rn$, with
$x_0\in\rn$ and $r_B\in(0,\fz)$,
and decompose 
$$f=f{\chi_{2B}}+f{\chi_{(2B)^\com}}=:f_1+f_2.$$
Then, for all $x\in B$,
$$\ga(f)(x)\le \ga(f_1)(x)+\ga(f_2)(x).$$
Similar to the estimate for $f_1$ in the proof of Theorem \ref{t2.1},
by Proposition \ref{pro-vz} and Lemmas \ref{lem2.2}{\rm(i)} and \ref{lem2.3},
if $\lz>\min\{\max\{2,p_1\},3+2\az/n\}$, we have
\begin{eqnarray}\label{eq2*2}
\phi(\vz(B,1))\|\ga(f_1)\|_{\lvb}
\ls\frac{\phi(\vz(B,1))}{\phi(\vz(2B,1))}\|f\|_\mor
\ls\|f\|_\mor.
\end{eqnarray}

Next, replacing $f$ in \eqref{eq-glz} by $f_2$,
we know that, for all $x\in B$,
\begin{eqnarray}\label{eq2*0}
\ga(f_2)(x)\ls \sa(f_2)(x)+\sum_{j=1}^{\fz}2^{-j\lz n/2}\saj(f_2)(x).
\end{eqnarray}
Let $k,\,j\in\nn$.
For any $x\in B$,
$$(y,t)\in \Gamma_{2^j}(x):=\{(y,t)\in\rn\times [0,\fz):\ |y-x|<2^jt\}$$ 
and $z\in (2^{k+1}B\bh 2^kB)\cap B(y,t)$, we have
$$t+2^{j}t>|x-y|+|y-z|\geq |x-z|\geq |z-x_0|-|x-x_0|>2^{k-1}r_B.$$
From this, the Minkowski inequality
and the fact that $\tz\in\ca$ is uniformly bounded,
it follows that, for all $x\in B$,
\begin{eqnarray}\label{eq2*3}
&&\saj(f_2)(x)\noz\\
&&\hs\le\lf\{\int_0^\fz\int_{|x-y|<2^jt}
\lf[\sum_{k=1}^{\fz}\sup_{\tz\in\ca}\lf|
\int_{2^{k+1}B\bh 2^kB}\tz_t(y-z)f(z)\,dz\r|\r]^2\dytn\r\}^{1/2}\noz\\
&&\hs\ls\sum_{k=1}^{\fz}\lf\{\int_{2^{k-2-j}r_B}^{\fz}\int_{|x-y|<2^{j}t}\lf|t^{-n}
 \int_{2^{k+1}B\bh 2^kB}|f(z)|\,dz\r|^2 \dytn \r\}^{1/2}\noz\\
&&\hs\ls 2^{3jn/2}\sum_{k=1}^{\fz}\frac{1}{|2^{k+1}B|}\int_{2^{k+1}B\bh 2^kB}|f(z)|\,dz,
\end{eqnarray}
which, together with Lemma \ref{lem2.4}, further implies that, for all $x\in B$,
$$\saj(f_2)(x)\ls 2^{3jn/2}\sum_{k=1}^{\fz}\frac{\vz(2^{k+1}B,1)}{|2^{k+1}B|}
 \|f\|_{\vz,2^{k+1}B}\lf\|\frac{1}{\vz(\cdot,1)}\r\|_{\wz{\psi},2^{k+1}B}.$$
By this, \eqref{eq2.c} and \eqref{eq-phi}, we find that, for all $x\in B$,
\begin{eqnarray*}
\phi(\vz(B,1))\saj(f_2)(x)
&\ls&  2^{3jn/2}\sum_{k=1}^{\fz}\phi(\vz(B,1))\|f\|_{\vz,2^{k+1}B}\\
&\ls&  2^{3jn/2}\sum_{k=1}^{\fz}\frac{\phi(\vz(B,1))}{\phi(\vz(2^{k+1}B,1))}\|f\|_\mor\\
&\ls&  2^{3jn/2}\|f\|_\mor,
\end{eqnarray*}
which further implies that
$$\frac{1}{\vz(B,1)}\int_B\vz
\lf(x,\frac{\phi(\vz(B,1))\saj(f_2)(x)}{2^{3jn/2}\|f\|_\mor}\r)\,dx\ls 1.$$
From this and Lemma \ref{lem2.2}(i), we deduce that
\begin{eqnarray*}
\phi(\vz(B,1))\|\saj(f_2)\|_{\vz,B}\ls  2^{3jn/2}\|f\|_\mor.
\end{eqnarray*}
Hence, 
$$\|\saj(f_2)\|_{\mor}\ls  2^{3jn/2}\|f\|_\mor,$$
which, together with \eqref{eq2*0}, Remark \ref{rem-mor}(i)
and Theorem \ref{t2.1}, further implies that
there exists some $\gz\in(0,1]$ such that, when $\lz>3$,
\begin{eqnarray*}
\|\ga(f_2)\|_\mor^\gz
&\ls&\lf\|\!\lf|\sa(f_2)+\sum_{j=1}^{\fz}2^{-j\lz n/2}\saj(f_2)\r\|\!\r|^\gz\\
&\ls&\lf\|\!\lf|\sa(f_2)\r\|\!\r|^\gz
+\sum_{j=1}^{\fz}2^{-j\gz\lz n/2}\lf\|\!\lf|\saj(f_2)\r\|\!\r|^\gz\\
&\sim&\lf\|\sa(f_2)\r\|_\mor^\gz
+\sum_{j=1}^{\fz}2^{-j\gz\lz n/2}\lf\|\saj(f_2)\r\|_\mor^\gz\\
&\ls&\|f\|_\mor^\gz\lf[1+\sum_{j=1}^{\fz}2^{-j\gz(\lz-3)n/2}\r]
\ls \|f\|_\mor^\gz.
\end{eqnarray*}
This, combined with \eqref{eq2*2} and Remark \ref{rem-mor}{\rm(i)},
finishes the proof of Theorem \ref{t2.3}.
\end{proof}

The \emph{space} $\bmo$,
originally introduced by John and Nirenberg \cite{jn},
is defined as the space of all locally integrable functions $f$ such that
$$\|f\|_{\mathrm{BMO}(\mathbb R^n)}
:= \sup_{B\subset\rn}\frac{1}{|B|}\int_{B}|f(x)-f_B|\,dx<\infty,$$
where the supremum is taken over all balls $B\subset \mathbb R^n$
and $f_B$ as in \eqref{f-b}.
Let $b\in {\rm BMO}(\rn)$. The commutators generated by $b$
and intrinsic Littlewood-Paley functions are defined, respectively, by setting, for all $x\in\rn$,
$$[b,\sa](f)(x):=\lf[\iint_{\bgz(x)}\sup_{\tz\in\ca}
\lf|\int_{\rn}[b(x)-b(z)]\tz_t(y-z)f(z)\,dz\r|^2 \dytn \r]^{1/2},$$
$$[b,g_{\az}](f)(x):=\lf[\int_{0}^{\fz}\sup_{\tz\in\ca}
\lf|\int_{\rn}[b(x)-b(z)]\tz_t(y-z)f(z)\,dz\r|^2\dt\r]^{1/2}$$
and
\begin{eqnarray*}
[b,\ga](f)(x)
:=&&\lf[\iint_{\mathbb{R}^{n+1}_{+}}\lf(\frac{t}{t+|x-y|}\r)^{\lz n}\r.\\
&&\hs\times\lf.\sup_{\phi\in\ca}\lf|\int_{\rn}[b(x)-b(z)]\phi_t(y-z)f(z)\,dz\r|^2\dytn\r]^{1/2}.
\end{eqnarray*}

Now we establish the boundedness of these commutators on $\mor$.
To this end, we first recall the following well-known property of $\bmo$
functions (see, for example, \cite[Corollary 6.12]{d00}).

\begin{proposition}\label{J-N}
Assume that $b\in\bmo$. Then, for any $p\in [1,\fz)$,
there exists a positive constant $C$ such that
$$\sup_{B\st\rn}\lf[\frac{1}{|B|}\int_B|b(x)-b_B|^p\,dx\r]^{1/p}\le C\|b\|_\bmo,$$
where the supremum is taken over all balls $B$ of $\rn$
and $b_B$ as in \eqref{f-b} with $f$ replaced by $b$.
\end{proposition}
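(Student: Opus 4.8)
The plan is to obtain this $L^p$ estimate from the exponential form of the John--Nirenberg inequality, namely the statement that there exist universal positive constants $c_1$ and $c_2$ (depending only on $n$) such that, for every $b\in\bmo$, every ball $B\st\rn$ and every $\lz\in(0,\fz)$,
$$\lf|\lf\{x\in B:\ |b(x)-b_B|>\lz\r\}\r|\le c_1\,e^{-c_2\lz/\|b\|_\bmo}\,|B|.$$
This is precisely (the quantitative version behind) \cite[Corollary 6.12]{d00}, so I would simply cite it here; if a self-contained argument were preferred, it is the standard Calder\'on--Zygmund stopping-time iteration described in the last paragraph. Granting the exponential bound, fix $p\in[1,\fz)$ and a ball $B$. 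For $p=1$ the asserted inequality is immediate from the very definition of $\bmo$ with $C=1$, so we may assume $p\in(1,\fz)$; and by homogeneity of both sides in $b$ we may further normalize $\|b\|_\bmo=1$ (otherwise replace $b$ by $b/\|b\|_\bmo$ and multiply the resulting estimate by $\|b\|_\bmo$ at the end).

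Next I would use the layer-cake (distribution function) identity
$$\frac{1}{|B|}\int_B|b(x)-b_B|^p\,dx
=\frac{p}{|B|}\int_0^\fz\lz^{p-1}\lf|\lf\{x\in B:\ |b(x)-b_B|>\lz\r\}\r|\,d\lz,$$
and insert the John--Nirenberg bound (with $\|b\|_\bmo=1$) into the integrand, which gives
$$\frac{1}{|B|}\int_B|b(x)-b_B|^p\,dx
\le p\,c_1\int_0^\fz\lz^{p-1}e^{-c_2\lz}\,d\lz
= p\,c_1\,c_2^{-p}\,\Gamma(p),$$
the last equality being the substitution $u=c_2\lz$. Taking $p$-th roots yields the claim with $C=\lf(p\,c_1\,c_2^{-p}\,\Gamma(p)\r)^{1/p}$, a constant depending only on $p$ and $n$; since this bound does not depend on $B$, taking the supremum over all balls and then restoring the factor $\|b\|_\bmo$ finishes the proof.

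The only genuine difficulty is establishing the exponential John--Nirenberg inequality itself; everything after it is the routine integration above. The standard route, which I would invoke if needed, is: set $F(\lz):=\sup_{B}|B|^{-1}|\{x\in B:|b(x)-b_B|>\lz\}|$ after normalizing $\|b\|_\bmo=1$, apply to $|b-b_B|$ on a fixed cube the Calder\'on--Zygmund decomposition at a fixed height $\lz_0$ (a dimensional multiple of $1$, exceeding the mean $|B|^{-1}\int_B|b-b_B|$), use that the selected subcubes $\{Q_j\}$ satisfy $\sum_j|Q_j|\le\lz_0^{-1}|B|$ and $|b_{Q_j}-b_B|\lesssim\lz_0$, and deduce the self-improving recursion $F(\lz+c\lz_0)\le\lz_0^{-1}F(\lz)$; iterating this recursion produces the geometric (hence exponential) decay, after a preliminary truncation argument to guarantee $F<\fz$. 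I expect that in the paper the authors will just cite \cite{d00} rather than reproduce this argument.
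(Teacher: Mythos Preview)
Your argument is correct and is exactly the standard derivation of this $L^p$ estimate from the exponential John--Nirenberg inequality via the layer-cake formula. The paper itself does not prove this proposition at all: it is stated as a recalled fact with the citation ``see, for example, \cite[Corollary 6.12]{d00}'', so your expectation in the last sentence is accurate, and what you have written is essentially the proof behind that citation.
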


If $\az\in(0,1]$, $\lz>\max\{2,p\}$ and $b\in\bmo$,
it was proved in \cite[Theorem 3.1]{w12} that
the commutators $[b,\sa]$ and $[b,\ga]$ are bounded
on $L^p_w(\rn)$ for all $p\in(1,\fz)$ and $w\in A_p(\rn)$.
By this and Lemma \ref{lem2.6}, we have the following boundedness of
$[b,\sa]$ and $[b,\ga]$ on $\lv$,
whose proof is similar to that of Proposition \ref{pro-vz},
the details being omitted.

\begin{proposition}\label{p-vz}
Let $\vz$ be a growth function with $1<p_0\le p_1<\fz$,
$\vz\in\aa_{p_0}(\rn)$, $b\in\bmo$ and $\lz>\min\{\max\{2,\,p_1\},3+2\az/n\}$.
Then there exists a positive constant C such that, for all $f\in L^\vz(\rn)$,
$$\int_\rn\vz(x,[b,\sa](f)(x))\,dx\le C\int_\rn\vz(x,|f(x)|)\,dx$$
and
$$\int_\rn\vz(x,[b,\ga](f)(x))\,dx\le C\int_\rn\vz(x,|f(x)|)\,dx.$$
\end{proposition}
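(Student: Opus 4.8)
The plan is to re-run the argument of Proposition~\ref{pro-vz} for the commutators, the one new ingredient being the pointwise observation that, for each fixed $x\in\rn$, the commutator of $b$ with $\sa$ (resp. $\ga$) is precisely $\sa$ (resp. $\ga$) evaluated at the modified input $h_x(z):=[b(x)-b(z)]f(z)$. Indeed, since $b(x)$ is a constant in $z$, for every $\tz\in\ca$ one has $\int_\rn[b(x)-b(z)]\tz_t(y-z)f(z)\,dz=h_x\ast\tz_t(y)$, whence $[b,\sa](f)(x)=\sa(h_x)(x)$ and $[b,\ga](f)(x)=\ga(h_x)(x)$. Since $b\in\bmo\st L^q_\loc(\rn)$ for every $q\in[1,\fz)$ and the functions to which the operators are applied lie in $L^1_\loc(\rn)$ (because $\vz\in\aa_\fz(\rn)$), we have $h_x\in L^1_\loc(\rn)$ for each $x$, so these quantities are well defined; moreover, as $\sa$ and $\ga$ are sublinear in their argument, each commutator is sublinear in $f$, which is what allows the Musielak--Orlicz interpolation Lemma~\ref{lem2.6} to be applied below.

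I would first handle $[b,\sa]$. By \cite[Theorem 3.1]{w12}, $[b,\sa]$ is bounded on $L^p_w(\rn)$ for every $p\in(1,\fz)$ and $w\in A_p(\rn)$, the constraint on $\lz$ there being relevant only for $[b,\ga]$. Since $\vz\in\aa_{p_0}(\rn)$ with $p_0>1$, Lemma~\ref{lem2.5}(ii) furnishes $\wz p_0\in(1,p_0)$ with $\vz\in\aa_{\wz p_0}(\rn)$; fixing any $\wz p_1\in(p_1,\fz)$, Lemma~\ref{lem2.5}(i) gives $\vz(\cdot,t)\in A_{\wz p_0}(\rn)\st A_{\wz p_1}(\rn)$ uniformly in $t$. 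For $i\in\{0,1\}$, Chebyshev's inequality converts the strong $L^{\wz p_i}_{\vz(\cdot,t)}(\rn)$ bound into the weak-type hypothesis of Lemma~\ref{lem2.6} with constant uniform in $t$, so that lemma yields
$$\int_\rn\vz(x,[b,\sa](f)(x))\,dx\ls\int_\rn\vz(x,|f(x)|)\,dx.$$

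For $[b,\ga]$ I would distinguish which term attains $\min\{\max\{2,p_1\},3+2\az/n\}$. If $\lz>\max\{2,p_1\}$, I pick $\wz p_0\in(1,p_0)$ as above and $\wz p_1\in(p_1,\lz)$, so that $\lz>\max\{2,\wz p_i\}$ for $i\in\{0,1\}$; then \cite[Theorem 3.1]{w12} gives boundedness of $[b,\ga]$ on $L^{\wz p_i}_{\vz(\cdot,t)}(\rn)$ uniformly in $t$, and Lemma~\ref{lem2.6} finishes the estimate exactly as for $[b,\sa]$. If instead $\lz>3+2\az/n$ (which occurs when $p_1>3+2\az/n$), I combine the identity $[b,\ga](f)(x)=\ga(h_x)(x)$ with \eqref{eq-glz} and the pointwise comparison $\saj(\cdot)(x)\ls 2^{j(3n/2+\az)}\sa(\cdot)(x)$ of \cite[Exercise 6.2]{w08}, both applied with $f$ replaced by $h_x$, to obtain, for all $x\in\rn$,
$$[b,\ga](f)(x)\ls\lf[1+\sum_{j=1}^{\fz}2^{-\frac{jn}{2}(\lz-3-\frac{2\az}{n})}\r]\sa(h_x)(x)\ls\sa(h_x)(x)=[b,\sa](f)(x).$$
Using the nondecreasing property and uniformly upper type $p_1$ of $\vz(x,\cdot)$ together with the bound for $[b,\sa]$ just obtained then gives $\int_\rn\vz(x,[b,\ga](f)(x))\,dx\ls\int_\rn\vz(x,[b,\sa](f)(x))\,dx\ls\int_\rn\vz(x,|f(x)|)\,dx$.

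The only step demanding genuine care is the reduction $[b,T](f)(x)=T(h_x)(x)$ of the first paragraph: checking the local integrability and measurability of $h_x$ and the resulting sublinearity of the commutators. Once this is in place, the two ranges of $\lz$ are dispatched, respectively, by the weighted bounds of \cite[Theorem 3.1]{w12} combined with Lemma~\ref{lem2.6}, and by Wilson's pointwise comparison combined with the $[b,\sa]$ estimate; everything else follows the proof of Proposition~\ref{pro-vz} verbatim.
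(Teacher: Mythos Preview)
Your proof is correct and follows essentially the same approach the paper has in mind: the paper omits the details but explicitly says the argument is ``similar to that of Proposition~\ref{pro-vz}'', citing the weighted $L^p$ bounds of \cite[Theorem~3.1]{w12} together with the interpolation Lemma~\ref{lem2.6}; your two-case treatment of $\lz$ mirrors exactly the structure of that proof. The one place where you add something the paper leaves implicit is the pointwise identity $[b,T](f)(x)=T(h_x)(x)$ with $h_x(z)=[b(x)-b(z)]f(z)$, which is precisely what is needed to transport Wilson's comparison $\saj\ls 2^{j(3n/2+\az)}\sa$ to the commutator and thereby handle the range $\lz>3+2\az/n$; this is the natural way to make the ``similar'' argument rigorous.
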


\begin{theorem}\label{t2.2}
Let $\az\in(0,1]$, $b\in \bmo$, $\vz$ be a growth function with $1<p_0\le p_1<\fz$,
$\vz\in\aa_{p_0}(\rn)$ and $\phi : [0,\fz)\to [0,\fz)$ be nondecreasing.
If there exists a positive constant $C$ such that,
for all $r\in (0,\fz)$,
$$\int_r^{\fz}\frac{1}{\phi(t)t}\,dt\le C\frac{1}{\phi(r)},$$
then there exists a positive constant $\wz{C}$ such that,
for all $f\in\mor$,
$$\|[b,\sa](f)\|_{\mor}\le \wz C\|f\|_{\mor}.$$
\end{theorem}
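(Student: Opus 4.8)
The plan is to run the scheme of the proof of Theorem~\ref{t2.1}, with $\sa$ replaced by $[b,\sa]$, paying for the extra factor $|b(x)-b(z)|$ by a John--Nirenberg inequality with respect to the $A_\infty(\rn)$ weight $\vz(\cdot,1)$. Fix a ball $B:=B(x_0,r_B)\st\rn$ and decompose $f=f\chi_{2B}+f\chi_{(2B)^\com}=:f_1+f_2$; since $[b,\sa]$ is subadditive and positively homogeneous of degree one, $[b,\sa](f)(x)\le[b,\sa](f_1)(x)+[b,\sa](f_2)(x)$ for all $x\in B$. For the local piece $[b,\sa](f_1)$ I would argue verbatim as for \eqref{eq2.0}, using the weighted $L^\vz(\rn)$-boundedness of $[b,\sa]$ (Proposition~\ref{p-vz}) in place of Proposition~\ref{pro-vz}, together with Lemmas~\ref{lem2.2}{\rm(i)} and \ref{lem2.3} and the doubling of $\vz(\cdot,1)$, to get
$$\phi(\vz(B,1))\|[b,\sa](f_1)\|_{\lvb}\ls\phi(\vz(B,1))\|f\|_{\vz,2B}\ls\|f\|_{\mor}.$$

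For $[b,\sa](f_2)$ the commutator structure enters. Inside its defining integral I would split $b(x)-b(z)=(b(x)-b_B)-(b(z)-b_B)$, so that, for all $x\in B$,
$$[b,\sa](f_2)(x)\le|b(x)-b_B|\,\sa(f_2)(x)+\sa\big((b-b_B)f_2\big)(x)=:\mathrm{I}(x)+\mathrm{II}(x).$$
For $\mathrm{I}$, recall from \eqref{eq2.4} that $\phi(\vz(B,1))\sa(f_2)(x)\ls\|f\|_{\mor}$ on $B$; hence $\phi(\vz(B,1))\mathrm{I}(x)\ls|b(x)-b_B|\,\|f\|_{\mor}$, and combining the uniformly upper type $p_1$ of $\vz$ with the John--Nirenberg inequality with respect to $\vz(\cdot,1)\,dx$ (which yields $\frac1{\vz(B,1)}\int_B|b(x)-b_B|^{p_1}\vz(x,1)\,dx\ls1$) gives $\frac1{\vz(B,1)}\int_B\vz\big(x,\phi(\vz(B,1))\mathrm{I}(x)/\|f\|_{\mor}\big)\,dx\ls1$, so $\phi(\vz(B,1))\|\mathrm{I}\|_{\lvb}\ls\|f\|_{\mor}$ by Lemma~\ref{lem2.2}{\rm(i)}.

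For $\mathrm{II}$, the annular argument leading to \eqref{eq2.1}, now carrying the weight $|b(z)-b_B|$, gives $\mathrm{II}(x)\ls\sum_{k=1}^\fz\frac1{|2^{k+1}B|}\int_{2^{k+1}B\bh 2^kB}|b(z)-b_B|\,|f(z)|\,dz$ for all $x\in B$. Using $|b(z)-b_B|\le|b(z)-b_{2^{k+1}B}|+|b_{2^{k+1}B}-b_B|\ls|b(z)-b_{2^{k+1}B}|+(k+1)\|b\|_{\bmo}$ and the generalized H\"older inequality of Lemma~\ref{lem2.4}, the $(k+1)\|b\|_{\bmo}$-piece is handled through \eqref{eq2.c} exactly as in Theorem~\ref{t2.1}, while for the $|b(z)-b_{2^{k+1}B}|$-piece I would prove the uniform-in-$k$ bound
$$\frac{\vz(2^{k+1}B,1)}{|2^{k+1}B|}\lf\|\frac{b-b_{2^{k+1}B}}{\vz(\cdot,1)}\r\|_{\wz{\psi},2^{k+1}B}\ls\|b\|_{\bmo}$$
by means of Lemma~\ref{lem2.2}{\rm(ii)}, Lemma~\ref{c-f-p}, the $\aa_{p_0}(\rn)$-condition for $\vz(\cdot,1)$, and the John--Nirenberg inequality applied to the dual weights $\vz(\cdot,1)^{1-p'_0}\in A_{p'_0}(\rn)$ and $\vz(\cdot,1)^{1-p'_1}\in A_{p'_1}(\rn)$. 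This yields $\phi(\vz(B,1))\mathrm{II}(x)\ls\|f\|_{\mor}\sum_{k=1}^\fz(k+1)\phi(\vz(B,1))/\phi(\vz(2^{k+1}B,1))$ for all $x\in B$. The extra factor $k+1$ is still summable: grouping into blocks of length $j_0$ as in \eqref{eq-phi} and using \eqref{j0} (so that $\vz(2^{lj_0}B,1)\gs\vz(B,1)e^{cl}$ for some $c\in(0,\fz)$), the series is bounded by $C\big[1+\phi(\vz(B,1))\int_{\vz(B,1)}^\fz\frac{1+\log(t/\vz(B,1))}{\phi(t)t}\,dt\big]$, and the $\log$-term is absorbed by Fubini, since
$$\int_r^\fz\frac{\log(t/r)}{\phi(t)t}\,dt=\int_r^\fz\frac1s\lf(\int_s^\fz\frac{dt}{\phi(t)t}\r)ds\le C\int_r^\fz\frac{ds}{\phi(s)s}\le\frac{C^2}{\phi(r)},$$
which uses the hypothesis on $\phi$ twice. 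Hence $\phi(\vz(B,1))\mathrm{II}(x)\ls\|f\|_{\mor}$ on $B$, so $\phi(\vz(B,1))\|\mathrm{II}\|_{\lvb}\ls\|f\|_{\mor}$ by Lemma~\ref{lem2.2}{\rm(i)}.

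Combining the estimates for $[b,\sa](f_1)$, $\mathrm{I}$ and $\mathrm{II}$ via the quasi-triangle inequality for $\|\cdot\|_{\lvb}$ from Remark~\ref{rem-mor}{\rm(i)} and taking the supremum over all balls $B$ then finishes the proof. I expect the main obstacle to be the uniform $\wz{\psi}$-norm bound for $(b-b_{2^{k+1}B})/\vz(\cdot,1)$ displayed above, which is precisely where the full strength of $\vz\in\aa_{p_0}(\rn)$ and of the weighted John--Nirenberg inequality are needed, together with checking that the series carrying the extra linear factor $k+1$ coming from the commutator is still summable under only the Dini-type hypothesis on $\phi$ (the Fubini identity above being the point that makes this work).
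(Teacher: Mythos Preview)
Your proposal is correct and follows essentially the same scheme as the paper's own proof: the same decomposition $f=f_1+f_2$, the local piece via Proposition~\ref{p-vz}, the splitting $b(x)-b(z)=(b(x)-b_B)-(b(z)-b_B)$ for the far piece, the weighted John--Nirenberg/reverse H\"older argument for $\mathrm{I}$, the generalized H\"older inequality together with the $\wz\psi$-norm bound \eqref{eq2-8} for the $|b-b_{2^{k+1}B}|$-piece of $\mathrm{II}$, and the same Fubini trick (your displayed identity for $\int_r^\infty\frac{\log(t/r)}{\phi(t)t}\,dt$) to absorb the extra factor $k+1$. The only cosmetic difference is that the paper keeps the $|b-b_{2^{k+1}B}|$-piece as a separate sum without the $(k+1)$ factor (handled by \eqref{eq-phi}), whereas you majorize everything by the $(k+1)$-weighted sum; also, for $\mathrm{I}$ the paper uses both the lower type $p_0$ and the upper type $p_1$ (giving the estimate $\vz(x,s)\ls(s^{p_0}+s^{p_1})\vz(x,1)$), which you should invoke as well rather than upper type $p_1$ alone.
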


\begin{proof}
Without loss of generality, we may assume that
$\|b\|_\bmo=1$; otherwise, we replace $b$ by $b/\|b\|_\bmo$.
Fix any ball $B:=B(x_0,r_B)\st\rn$ for some $x_0\in\rn$ and $r_B\in(0,\fz)$
and let 
$$f=f{\chi_{2B}}+f{\chi_{(2B)^\com}}=: f_1+f_2.$$
Since, for all $\az\in (0,1]$, $[b,\sa]$ is sublinear,
it follows that, for all $x\in B$,
$$[b,\sa](f)(x)\le [b,\sa](f_1)(x)+[b,\sa](f_2)(x).$$

Taking $\mu:=\|f\|_{\vz,2B}\neq0$, by Proposition \ref{p-vz} and Lemma \ref{lem2.3},
we obtain
\begin{eqnarray*}
\frac{1}{\vz(B,1)}\int_{B}\vz\lf(x, \frac{[b,\sa](f_1)(x)}{\mu}\r)\,dx
&\ls& \frac{1}{\vz(B,1)}\int_{\rn}\vz\lf(x, \frac{|f_1(x)|}{\mu}\r)\,dx\\
&\sim& \frac{1}{\vz(B,1)}\int_{2B}\vz\lf(x, \frac{|f(x)|}{\mu}\r)\,dx\ls 1.
\end{eqnarray*}
From this and Lemma \ref{lem2.2}{\rm(i)}, we deduce that
$\|[b,\sa](f_1)\|_\lvb\ls\|f\|_{\vz,2B}$.
Therefore,
\begin{eqnarray}\label{eq2-1}
\phi(\vz(B,1))\|[b,\sa](f_1)\|_\lvb
&\ls&\phi(\vz(B,1))\|f\|_{\vz,2B}\noz\\
&\ls&\frac{\phi(\vz(B,1))}{\phi(\vz(2B,1))}\|f\|_\mor\noz\\
&\ls&\|f\|_\mor.
\end{eqnarray}

Next, we turn to estimate $[b,\sa](f_2)$.
Since, for any $x\in B$ and $(y,t)\in\Gamma(x)$,
\begin{eqnarray}\label{(4)}
&&\sup_{\tz\in\ca}\lf|\int_{\rn}[b(x)-b(z)]\tz_t(y-z)f_2(z)\,dz\r|\noz\\
&&\hs\le|b(x)-b_B| \sup_{\tz\in\ca}\lf|\int_{\rn}\tz_t(y-z)f_2(z)\,dz\r|\noz\\
&&\hs\hs +\sup_{\tz\in\ca}\lf|\int_{\rn}[b(z)-b_B]\tz_t(y-z)f_2(z)\,dz\r|,
\end{eqnarray}
where $b_B$ is as in \eqref{f-b} with $f$ replaced by $b$,
it follows that, for all $x\in B$,
\begin{eqnarray*}
[b,\sa](f_2)(x) &&\le |b(x)-b_B| \sa(f_2)(x)\\
&&\hs+\lf\{\iint_{\bgz(x)}\sup_{\tz\in\mathcal{C}_{\az}(\rn)}
 \lf|\int_{\rn}[b(z)-b_B]\tz_t(y-z)f_2(z)\,dz\r|^2\dytn\r\}^{1/2}\\
&&=:{\rm I}_1(x)+{\rm I}_2(x).
\end{eqnarray*}

For ${\rm I}_1(x)$,
by \eqref{eq2.4}, we see that, for all $x\in B$,
$$\phi(\vz(B,1)){\rm I}_1(x) \ls|b(x)-b_B|\|f\|_\mor.$$
By this and the fact
that $\vz$ is of uniformly lower type $p_0$ and upper type $p_1$,
we know that
\begin{eqnarray}\label{eq2-6}
&&\frac{1}{\vz(B,1)}\int_{B}\vz\lf(x, \frac{\phi(\vz(B,1)) {\rm I}_1(x) }{\|f\|_\mor}\r)\,dx\noz\\
&&\hs\ls\frac{1}{\vz(B,1)}\int_{B}\lf[|b(x)-b_B|^{p_1}+|b(x)-b_B|^{p_0}\r]
\vz(x,1)\,dx.
\end{eqnarray}
Since $\vz(\cdot,1)\in A_{p_0}(\rn)\st A_{p_1}(\rn)$, there exists some $r\in(1,\fz)$
such that $\vz(\cdot,1)\in {\rm RH}_r(\rn)$.
From this, the H\"{o}lder inequality and Proposition \ref{J-N},
we deduce that
\begin{eqnarray}\label{eq2-2}
&&\lf[\frac{1}{\vz(B,1)}\int_{B}|b(x)-b_B|^{p_i}\vz(x,1)\,dx\r]^{1/p_i}\noz\\
&&\hs\le \frac{1}{[\vz(B,1)]^{1/p_i}}\lf[\int_B|b(x)-b_B|^{r' p_i}\,dx\r]^{1/(r' p_i)}
\lf\{\int_B [\vz(x,1)]^r\,dx\r\}^{1/(rp_i)}\noz\\
&&\hs\ls \lf[\frac{1}{|B|}\int_B|b(x)-b_B|^{r' p_i}\,dx\r]^{1/(r' p_i)}\ls 1,
\end{eqnarray}
where $i\in\{0,1\}$.
By this, \eqref{eq2-6} and Lemma \ref{lem2.2}{\rm(i)}, we have
\begin{eqnarray}\label{eq2-3}
\phi(\vz(B,1))\|{\rm I}_1\|_{\lvb}\ls\|f\|_{\mor}.
\end{eqnarray}

On the other hand,
from \eqref{eq(3)}, the Minkowski inequality
and the fact that $\tz\in\ca$ is uniformly bounded, it follows that,
for all $x\in B$,
\begin{eqnarray*}
&&{\rm I}_2(x)\\
&&\hs\le \lf\{\iint_{\bgz(x)}\lf[\sum_{k=1}^\fz\sup_{\tz\in\ca}
 \lf|\int_{2^{k+1}B\bh 2^kB}[b(z)-b_B]\tz_t(y-z)f_2(z)\,dz\r|\r]^2\dytn\r\}^{1/2}\\
&&\hs\ls \sum_{k=1}^\fz\lf[\int_{2^{k+1}B\bh 2^kB}|b(z)-b_B||f(z)|\,dz\r]
\lf\{\int_{2^{k-2}r}^\fz\int_{|x-y|<t}\,\frac{dy\,dt}{t^{3n+1}}\r\}^{1/2}\\
&&\hs\ls\sum_{k=1}^\fz\frac{1}{|2^{k+1}B|}\int_{2^{k+1}B}|b(z)-b_{2^{k+1}B}||f(z)|\,dz\\
&&\hs\hs+ \sum_{k=1}^\fz\frac{1}{|2^{k+1}B|}|b_{2^{k+1}B}-b_B|\int_{2^{k+1}B}|f(z)|\,dz\\
&&\hs=:{\rm J}_{1}(x)+{\rm J}_{2}(x).
\end{eqnarray*}
By Lemma \ref{lem2.4}, we know that, for all $x\in B$,
\begin{eqnarray}\label{eq2-9}
{\rm J}_{1}(x)
\ls\sum_{k=1}^\fz\frac{\vz(2^{k+1}B,1)}{|2^{k+1}B|} \|f\|_{\vz,2^{k+1}B}
 \lf\||b(\cdot)-b_{2^{k+1}B}|\frac{1}{\vz(\cdot,1)}\r\|_{\wz{\psi},2^{k+1}B}.
\end{eqnarray}
From Lemma \ref{c-f-p},
it follows that
\begin{eqnarray}\label{eq2-7}
&&\frac{1}{\vz(2^{k+1}B,1)}\int_{2^{k+1}B}\wz{\psi}\lf(x,
\frac{\vz(2^{k+1}B,1)|b(x)-b_{2^{k+1}B}|}{|2^{k+1}B|\vz(x,1)}\r)\vz(x,1)\,dx\noz\\
&&\hs\ls \frac{1}{\vz(2^{k+1}B,1)}\noz\\
&&\hs\hs\times\int_{2^{k+1}B}
\lf\{\sum_{i=0}^1\lf[\frac{|b(x)-b_{2^{k+1}B}|}{[\vz(x,1)]}\r]^{p'_i}
\lf[\frac{\vz(2^{k+1}B,1)}{|2^{k+1}B|}\r]^{p'_i}
\r\}\vz(x,1)\,dx.
\end{eqnarray}
Since $\vz(\cdot,1)\in A_{p_0}(\rn)\st A_{p_1}(\rn)$,
we know that 
$$w_i(\cdot):=[\vz(\cdot,1)]^{-p'_i/p_i}\in A_{p'_i}(\rn)$$
for $i\in\{0,1\}$ (see, for example, \cite[p.\,136]{d00}).
By this, the H\"{o}lder inequality and \eqref{eq2-2} with $p_i$ replaced by $p'_i$,
we conclude that, for $i\in\{0,1\}$,
\begin{eqnarray*}
&& \frac{1}{\vz(2^{k+1}B,1)}\int_{2^{k+1}B}|b(x)-b_{2^{k+1}B}|^{p'_i}
\lf[\frac{\vz(2^{k+1}B,1)}{|2^{k+1}B|}\r]^{p'_i}
 \frac{1}{[\vz(x,1)]^{p'_i}}\vz(x,1)\,dx\\
&&\hs\sim\lf[\frac{1}{|2^{k+1}B|}\int_{2^{k+1}B}\vz(x,1)\,dx\r]^{p'_i-1}
\lf\{\frac{1}{|2^{k+1}B|}\int_{2^{k+1}B}[\vz(x,1)]^{1-p'_i}\,dx\r\}\\
&&\hs\hs\times\frac{1}{w_i(2^{k+1}B)}\int_{2^{k+1}B}|b(x)-b_{2^{k+1}B}|^{p'_i}w_i(x)\,dx
\ls 1,
\end{eqnarray*}
where 
$$w_i({2^{k+1}B}):=\int_{2^{k+1}B}w_i(x)\,dx.$$
From this and \eqref{eq2-7}, it follows that
\begin{eqnarray*}
\frac{1}{\vz(2^{k+1}B,1)}\int_{2^{k+1}B}\wz{\psi}\lf(x,
  \frac{\vz(2^{k+1}B,1)|b(x)-b_{2^{k+1}B}|}{|2^{k+1}B|\vz(x,1)}\r)\vz(x,1)\,dx\ls1,
\end{eqnarray*}
which, together with Lemma \ref{lem2.2}{\rm(ii)}, further implies that
\begin{eqnarray}\label{eq2-8}
\frac{\vz(2^{k+1}B,1)}{|2^{k+1}B|} \lf\||b(\cdot)-b_{2^{k+1}B}|\frac{1}{\vz(\cdot,1)}\r\|_{\wz{\psi},2^{k+1}B}
 \ls 1.
\end{eqnarray}
By this, \eqref{eq2-9} and \eqref{eq-phi}, we conclude that, for all $x\in B$,
\begin{eqnarray}\label{eq2-4}
\phi(\vz(B,1)) {\rm J}_{1}(x)
&\ls&\sum_{k=1}^{\fz}\frac{\phi(\vz(B,1))}{\phi(\vz(2^{k+1}B,1))}\phi(\vz(2^{k+1}B,1))\|f\|_{\vz,2^{k+1}B}\noz\\
&\ls&\sum_{k=1}^{\fz}\frac{\phi(\vz(B,1))}{\phi(\vz(2^{k+1}B,1))}\|f\|_{\mor}
\ls\|f\|_{\mor}.
\end{eqnarray}

For ${\rm J}_{2}(x)$, since $b\in \bmo$, we have
$$|b_{2^{k+1}B}-b_B|\ls (k+1)\|b\|_\bmo.$$
By this, Lemma \ref{lem2.4} and \eqref{eq2.c}, we know that, for all $x\in B$,
\begin{eqnarray*}
{\rm J}_{2}(x)
&\ls&\sum_{k=1}^{\fz}(k+1) \frac{1}{|2^{k+1}B|}\int_{2^{k+1}B}|f(z)|\,dz\\
&\ls&\sum_{k=1}^{\fz}(k+1)\frac{\vz(2^{k+1}B,1)}{|2^{k+1}B|}
 \|f\|_{\vz,2^{k+1}B}\lf\|\frac{1}{\vz(\cdot,1)}\r\|_{\wz{\psi},2^{k+1}B}\\
&\ls&\sum_{k=1}^{\fz}\frac{k+1}{\phi(\vz(2^{k+1}B,1))}\|f\|_{\mor}.
\end{eqnarray*}
From \eqref{j0}, we deduce that there exists some $j_0\in\nn$
such that, for all $k\in\nn$, it holds true that
$1\ls \log(\frac{\vz(2^{(k+1)j_0}B,1)}{\vz(2^{kj_0}B,1)})$,
which further implies that
$$k\ls \int_{\vz(B,1)}^{\vz(2^{kj_0}B,1)}\frac{1}{s}\,ds.$$
By this, \eqref{j0} and the assumptions of $\phi$, we have
\begin{eqnarray*}
\quad \qquad&& \hspace{-0.25cm}\sum_{k=1}^\fz (k+1)\frac{\phi(\vz(B,1))}{\phi(\vz(2^{k+1}B,1))}\\
&& = \sum_{k=1}^{2j_0-1}(k+1)\frac{\phi(\vz(B,1))}{\phi(\vz(2^{k+1}B,1))}
+\sum_{k=1}^\fz\sum_{i=(k+1)j_0}^{(k+2)j_0-1}\frac{(i+1)\phi(\vz(B,1))}{\phi(\vz(2^iB,1))}\noz\\
&& \le \sum_{k=1}^{2j_0-1}(k+1)\frac{\phi(\vz(B,1))}{\phi(\vz(2^{k+1}B,1))}
    +j_0^2\sum_{k=1}^\fz 2(k+1)\frac{\phi(\vz(B,1))}{\phi(\vz(2^{(k+1)j_0}B,1))}\noz\\
&& \ls 1+ \phi(\vz(B,1))\sum_{k=1}^\fz \frac{k+1}{\phi(\vz(2^{(k+1)j_0}B,1))}\noz\\
&& \ls 1+ \phi(\vz(B,1))\sum_{k=1}^\fz \frac{k+1}{\phi(\vz(2^{(k+1)j_0}B,1))}
     \int_{\vz(2^{kj_0}B,1)}^{\vz(2^{(k+1)j_0}B,1)}\dt\noz\\
&& \ls 1+ \phi(\vz(B,1))\sum_{k=1}^\fz
    \int_{\vz(2^{kj_0}B,1)}^{\vz(2^{(k+1)j_0}B,1)}(k+1)\frac{1}{\phi(t)t}\,dt\noz\\
&& \ls 1+ \phi(\vz(B,1))\sum_{k=1}^\fz \int_{\vz(2^{kj_0}B,1)}^{\vz(2^{(k+1)j_0}B,1)}
   \frac{1}{\phi(t)t}\,dt\int_{\vz(B,1)}^{\vz(2^{kj_0}B,1)}\frac{1}{s}\,ds\noz\\
&& \ls 1+ \phi(\vz(B,1))\sum_{k=1}^\fz \int_{\vz(2^{kj_0}B,1)}^{\vz(2^{(k+1)j_0}B,1)}
   \frac{1}{\phi(t)t}\int_{\vz(B,1)}^t\,\frac{1}{s}\,ds\,dt\noz\\
&& \sim 1+ \phi(\vz(B,1))\int_{\vz(B,1)}^\fz\frac{1}{\phi(t)t}\int_{\vz(B,1)}^t\frac{1}{s}\,ds\,dt\noz\\
&& \sim 1+ \phi(\vz(B,1))\int_{\vz(B,1)}^\fz\frac{1}{s}\int_s^\fz\frac{1}{\phi(t)t}\,dt\,ds\\
&& \ls 1+ \phi(\vz(B,1))\int_{\vz(B,1)}^\fz\frac{1}{\phi(s)s}\,ds\ls 1.\noz
\end{eqnarray*}
Thus, for all $x\in B$,
\begin{eqnarray}\label{eq2-5}
\phi(\vz(B,1)) {\rm J}_{2}(x)\ls \|f\|_{\mor}.
\end{eqnarray}
Combining \eqref{eq2-4} and \eqref{eq2-5},
we see that, for all $x\in B$,
$$\phi(\vz(B,1)) {\rm I}_2(x) \ls \|f\|_\mor,$$
which further implies that
$$\phi(\vz(B,1))\|{\rm I}_2\|_{\lvb} \ls\|f\|_\mor.$$
From this and \eqref{eq2-3}, we deduce that
$$\phi(\vz(B,1))\|[b,\sa](f_2)\|_\lvb \ls \|f\|_\mor,$$
which, combined with \eqref{eq2-1}, completes the proof of Theorem \ref{t2.2}.
\end{proof}

By using an argument similar to that used in the proof of Theorem \ref{t2.2},
we can prove $[b,\ga]$ and $[b,g_\az]$
are, respectively, bounded on $\mor$ as following, the details being omitted.

\begin{proposition}\label{pro-bg}
Let $\az\in(0,1]$, $b\in \bmo$, $\vz$ be a growth function with $1<p_0\le p_1<\fz$,
$\vz\in\aa_{p_0}(\rn)$ and $\phi : [0,\fz)\to [0,\fz)$ be nondecreasing.
If there exists a positive constant $C$ such that,
for all $r\in (0,\fz)$,
$$\int_r^{\fz}\frac{1}{\phi(t)t}\,dt\le C\frac{1}{\phi(r)},$$
then there exists a positive constant $\wz{C}$ such that,
for all $f\in\mor$,
$$\|[b,g_\az](f)\|_{\mor}\le \wz C\|f\|_{\mor}.$$
\end{proposition}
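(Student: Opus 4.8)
The plan is to transcribe, with cosmetic changes, the proof of Theorem~\ref{t2.2}, replacing the intrinsic area function $\sa$ by the intrinsic $g$-function $g_{\az}$; since $g_{\az}$ carries no integration over the cone $\bgz(x)$, the estimates only get simpler, and the quantitative ingredients of that proof --- \eqref{eq2.1}, \eqref{eq2.c}, \eqref{eq2-8}, \eqref{eq-phi}, the reverse-H\"older/John--Nirenberg estimates \eqref{eq2-1}--\eqref{eq2-3}, and the summation bound $\sum_{k=1}^{\fz}(k+1)\phi(\vz(B,1))/\phi(\vz(2^{k+1}B,1))\ls1$ proved at the end of Theorem~\ref{t2.2} --- are reused directly. (The quickest route, in fact, is to observe that $[b,g_{\az}](h)(x)\sim[b,\sa](h)(x)$ pointwise, by Wilson's comparison of cone apertures applied with the fixed base point $x$ --- for which $b(x)-b(\cdot)$ is merely a fixed locally integrable modification of the input --- and then to invoke Theorem~\ref{t2.2}; but we spell out the self-contained argument, as the paper does.)

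Normalise $\|b\|_{\bmo}=1$, fix a ball $B:=B(x_0,r_B)\st\rn$, and write $f=f\chi_{2B}+f\chi_{(2B)^\com}=:f_1+f_2$, so that, by sublinearity, $[b,g_{\az}](f)(x)\le[b,g_{\az}](f_1)(x)+[b,g_{\az}](f_2)(x)$ for $x\in B$. For the local part, $[b,g_{\az}]$ is bounded on $\lv$: this is Proposition~\ref{p-vz} combined with the pointwise equivalence $[b,g_{\az}]\sim[b,\sa]$ noted above (or, alternatively, re-run the proof of Proposition~\ref{p-vz} from the weighted $L^p$-boundedness of $[b,g_{\az}]$ in \cite[Theorem~3.1]{w12} and Lemma~\ref{lem2.6}). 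With $\mu:=\|f\|_{\vz,2B}$, the computation behind \eqref{eq2-1} together with Lemmas~\ref{lem2.2}{\rm(i)} and~\ref{lem2.3} then gives $\phi(\vz(B,1))\|[b,g_{\az}](f_1)\|_{\lvb}\ls\frac{\phi(\vz(B,1))}{\phi(\vz(2B,1))}\|f\|_{\mor}\ls\|f\|_{\mor}$, since $\phi$ is nondecreasing.

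For the tail part, fix $x\in B$; as $f_2$ vanishes on $2B$, split $b(x)-b(z)=[b(x)-b_B]+[b_B-b(z)]$ to get $[b,g_{\az}](f_2)(x)\le{\rm I}_1(x)+{\rm I}_2(x)$ with ${\rm I}_1(x):=|b(x)-b_B|\,g_{\az}(f_2)(x)$ and ${\rm I}_2(x)$ the intrinsic $g$-function of $f_2$ with symbol $b(z)-b_B$. For ${\rm I}_1$, the computation of \eqref{eq2.1} applies verbatim to $g_{\az}(f_2)(x)$: for $x\in B$ and $z\in2^{k+1}B\setminus2^kB$ one has $|x-z|\sim2^kr_B$, so the kernel forces $t\gs2^kr_B$ and $\sup_{\tz\in\ca}|\int_{2^{k+1}B\setminus2^kB}\tz_t(x-z)f(z)\,dz|\ls t^{-n}\int_{2^{k+1}B\setminus2^kB}|f(z)|\,dz$; computing $\int_{2^{k-2}r_B}^{\fz}t^{-2n-1}\,dt\sim(2^kr_B)^{-2n}$ gives $g_{\az}(f_2)(x)\ls\sum_{k=1}^{\fz}\frac{1}{|2^{k+1}B|}\int_{2^{k+1}B\setminus2^kB}|f(z)|\,dz$, whence Lemma~\ref{lem2.4}, \eqref{eq2.c} and \eqref{eq-phi} give $\phi(\vz(B,1))g_{\az}(f_2)(x)\ls\|f\|_{\mor}$, i.e. $\phi(\vz(B,1)){\rm I}_1(x)\ls|b(x)-b_B|\,\|f\|_{\mor}$; then $\phi(\vz(B,1))\|{\rm I}_1\|_{\lvb}\ls\|f\|_{\mor}$ follows word for word from \eqref{eq2-1}--\eqref{eq2-3} (uniform lower/upper types of $\vz$, a reverse-H\"older exponent for $\vz(\cdot,1)\in A_{p_0}(\rn)$, H\"older, and Proposition~\ref{J-N}). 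For ${\rm I}_2$, decompose $b(z)-b_B=[b(z)-b_{2^{k+1}B}]+[b_{2^{k+1}B}-b_B]$ on each annulus and repeat the annular bound to reach
\[
{\rm I}_2(x)\ls\sum_{k=1}^{\fz}\frac{1}{|2^{k+1}B|}\int_{2^{k+1}B}|b(z)-b_{2^{k+1}B}||f(z)|\,dz+\sum_{k=1}^{\fz}\frac{|b_{2^{k+1}B}-b_B|}{|2^{k+1}B|}\int_{2^{k+1}B}|f(z)|\,dz=:{\rm J}_1(x)+{\rm J}_2(x),
\]
which are precisely the terms ${\rm J}_1,{\rm J}_2$ of Theorem~\ref{t2.2}: so ${\rm J}_1$ is controlled by Lemma~\ref{lem2.4} with weight $|b(\cdot)-b_{2^{k+1}B}|/\vz(\cdot,1)$, \eqref{eq2-8} and \eqref{eq-phi}, and ${\rm J}_2$ by $|b_{2^{k+1}B}-b_B|\ls(k+1)\|b\|_{\bmo}$, \eqref{eq2.c} and the weighted summation bound recalled above, giving $\phi(\vz(B,1))\|{\rm I}_2\|_{\lvb}\ls\|f\|_{\mor}$.

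Combining the local and tail estimates through the quasi-triangle inequality of Remark~\ref{rem-mor}{\rm(i)} and taking the supremum over all balls $B$ yields $\|[b,g_{\az}](f)\|_{\mor}\ls\|f\|_{\mor}$. I do not anticipate a real obstacle: the proof is a faithful, slightly easier copy of Theorem~\ref{t2.2}. The only step that deserves care is the boundedness of $[b,g_{\az}]$ on $\lv$ used in the local part --- equivalently, the commutator analogue of the pointwise comparison $g_{\az}\sim\sa$ --- so I would record and justify that reduction first; the companion statement for $[b,\ga]$ is then obtained in the same way, additionally using the $g_{\lz}^\ast$-decomposition \eqref{eq-glz} applied to the commutator exactly as in the proof of Theorem~\ref{t2.3}.
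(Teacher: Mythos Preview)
Your proposal is correct and follows exactly the approach the paper indicates: the paper itself offers no detailed proof, stating only that ``by using an argument similar to that used in the proof of Theorem~\ref{t2.2}, we can prove $[b,\ga]$ and $[b,g_\az]$ are, respectively, bounded on $\mor$\ldots the details being omitted.'' Your transcription of that argument with $\sa$ replaced by $g_\az$, together with the observation that $[b,g_\az](f)(x)\sim[b,\sa](f)(x)$ pointwise (by applying Wilson's $g_\az\sim\sa$ comparison to the function $z\mapsto[b(x)-b(z)]f(z)$ at the fixed point $x$), is precisely what is intended.
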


\begin{proposition}\label{t2.4}
Let $\az\in(0,1]$, $b\in \bmo$, $\vz$ be a growth function with $1<p_0\le p_1<\fz$,
$\vz\in\aa_{p_0}(\rn)$ and $\phi : [0,\fz)\to [0,\fz)$ be nondecreasing.
If there exists a positive constant $C$ such that,
for all $r\in (0,\fz)$,
$$\int_r^{\fz}\frac{1}{\phi(t)t}\,dt\le C\frac{1}{\phi(r)}$$
and $\lz>\min\{\max\{3,\,p_1\},3+2\az/n\}$,
then there exists a positive constant $\wz{C}$ such that, for all $f\in\mor$,
$$\|[b,\ga](f)\|_{\mor}\le \wz{C}\|f\|_{\mor}.$$
\end{proposition}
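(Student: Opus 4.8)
The plan is to run the proof of Theorem \ref{t2.3} with its appeals to Theorem \ref{t2.1} and Proposition \ref{pro-vz} replaced by their commutator analogues Theorem \ref{t2.2} and Proposition \ref{p-vz}. What makes this essentially mechanical is the following observation: for each fixed $x\in\rn$, if we set $F_x(z):=[b(x)-b(z)]f(z)$, then, straight from the definitions, $[b,\ga](f)(x)=\ga(F_x)(x)$, $[b,\sa](f)(x)=\sa(F_x)(x)$ and, for all $j\in\nn$, $[b,\saj](f)(x)=\saj(F_x)(x)$, where $[b,\saj]$ denotes the commutator built from the varying-aperture area function with aperture $2^j$. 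Applying to $F_x$ (with $x$ frozen) the annular splitting of the $g^\ast_\lz$-integral that produced \eqref{eq-glz}, we get the pointwise domination
$$[b,\ga](f)(x)\ls [b,\sa](f)(x)+\sum_{j=1}^{\fz}2^{-j\lz n/2}\,[b,\saj](f)(x),\qquad x\in\rn.$$
Without loss of generality I would assume $\|b\|_\bmo=1$ and then split into two cases, according to which of $\max\{3,p_1\}$ and $3+2\az/n$ is the smaller.

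\emph{Case $\lz>3+2\az/n$.} Applying \cite[Exercise 6.2]{w08} to $F_x$ with $x$ frozen gives $[b,\saj](f)(x)=\saj(F_x)(x)\ls 2^{j(3n/2+\az)}\sa(F_x)(x)=2^{j(3n/2+\az)}[b,\sa](f)(x)$ for all $x\in\rn$ and $j\in\nn$; plugging this into the pointwise domination and summing the geometric series $\sum_j 2^{-jn(\lz-3-2\az/n)/2}$, which converges precisely when $\lz>3+2\az/n$, yields $[b,\ga](f)(x)\ls [b,\sa](f)(x)$ for all $x\in\rn$. Then Theorem \ref{t2.2} immediately gives $\|[b,\ga](f)\|_\mor\ls\|[b,\sa](f)\|_\mor\ls\|f\|_\mor$.

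\emph{Case $\lz>\max\{3,p_1\}$.} Here $\lz>\min\{\max\{2,p_1\},3+2\az/n\}$, so the bound for $[b,\ga]$ in Proposition \ref{p-vz} is available. Fix a ball $B:=B(x_0,r_B)\st\rn$ and write $f=f_1+f_2:=f\chi_{2B}+f\chi_{(2B)^\com}$. For $f_1$, arguing exactly as for the $f_1$-part in the proof of Theorem \ref{t2.2} (taking $\mu=\|f\|_{\vz,2B}$, using the $[b,\ga]$-bound of Proposition \ref{p-vz}, Lemmas \ref{lem2.2}{\rm(i)} and \ref{lem2.3}, the doubling of $\vz(\cdot,1)$ and the monotonicity of $\phi$) gives $\phi(\vz(B,1))\|[b,\ga](f_1)\|_{\lvb}\ls\|f\|_\mor$. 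For $f_2$, I would estimate each $[b,\saj](f_2)$ by repeating the $f_2$-argument of the proof of Theorem \ref{t2.2} with the cone $\bgz(x)$ replaced by the wider cone $\{(y,t):|y-x|<2^jt\}$: writing $b(x)-b(z)=[b(x)-b_B]+[b_B-b(z)]$ gives $[b,\saj](f_2)(x)\le|b(x)-b_B|\,\saj(f_2)(x)+{\rm I}_2^{(j)}(x)$, where $\saj(f_2)(x)\ls 2^{3jn/2}\sum_{k}|2^{k+1}B|^{-1}\int_{2^{k+1}B\bh 2^kB}|f(z)|\,dz$ by \eqref{eq2*3}, and then $\phi(\vz(B,1))\,\saj(f_2)(x)\ls 2^{3jn/2}\|f\|_\mor$ for $x\in B$ by Lemma \ref{lem2.4}, \eqref{eq2.c} and \eqref{eq-phi}; the term ${\rm I}_2^{(j)}$ splits as ${\rm J}_1^{(j)}+{\rm J}_2^{(j)}$ via $b_B-b(z)=[b_B-b_{2^{k+1}B}]+[b_{2^{k+1}B}-b(z)]$ and is controlled by Lemma \ref{lem2.4}, Lemma \ref{c-f-p}, \eqref{eq2-8}, \eqref{eq2.c}, \eqref{eq-phi} and the $(k+1)$-weighted summation estimate of the proof of Theorem \ref{t2.2}, again only producing the factor $2^{3jn/2}$. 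Combining these with the uniform upper and lower type of $\vz$, \eqref{eq2-2} and Lemma \ref{lem2.2}{\rm(i)} gives $\phi(\vz(B,1))\|[b,\saj](f_2)\|_{\lvb}\ls 2^{3jn/2}\|f\|_\mor$ for all $j\in\nn$, while the proof of Theorem \ref{t2.2} already supplies $\phi(\vz(B,1))\|[b,\sa](f_2)\|_{\lvb}\ls\|f\|_\mor$. Feeding these into the pointwise domination and using the Aoki--Rolewicz quasi-norm attached to $\|\cdot\|_{\lvb}$ with an exponent $\gz\in(0,1]$ independent of $B$ (cf. Remark \ref{rem-mor}{\rm(i)}), I get
$$\phi(\vz(B,1))^\gz\|[b,\ga](f_2)\|_{\lvb}^\gz\ls\|f\|_\mor^\gz\lf[1+\sum_{j=1}^{\fz}2^{-j\gz n(\lz-3)/2}\r]\ls\|f\|_\mor^\gz$$
because $\lz>3$. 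Together with the bound for $f_1$ and the quasi-triangle inequality for $\|\cdot\|_{\lvb}$, this gives $\phi(\vz(B,1))\|[b,\ga](f)\|_{\lvb}\ls\|f\|_\mor$, and taking the supremum over all balls $B$ completes this case.

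The hard part will be the $f_2$-estimate in the second case: one must push the enlarged aperture $2^j$ through the entire BMO-commutator machinery of the proof of Theorem \ref{t2.2} — the two successive decompositions of $b(x)-b(z)$, the generalized H\"older inequality of Lemma \ref{lem2.4}, the complementary-function bounds of Lemma \ref{c-f-p}, and both the geometric sum \eqref{eq-phi} and its $(k+1)$-weighted refinement — and check that the aperture contributes no worse than the harmless polynomial factor $2^{3jn/2}$, so that the series in the last display still converges under the single extra hypothesis $\lz>3$. Everything else is a direct transcription of the proofs of Theorems \ref{t2.2} and \ref{t2.3}.
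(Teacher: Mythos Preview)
Your proposal is correct and follows essentially the approach the paper intends (the paper omits the proof, pointing to Theorem~\ref{t2.2}; the fully written analogue is Theorem~\ref{t3.3}). The only organizational difference is the order of the two decompositions of $[b,\ga](f_2)(x)$: the paper first splits off $|b(x)-b_B|\,\ga(f_2)(x)$ and then breaks the remainder into the dyadic-aperture sum $\sum_j 2^{-j\lz n/2}{\rm I}_j$, while you first pass to the $j$-sum via your $F_x$ device and then split each $[b,\saj](f_2)(x)$; the ingredients (\eqref{eq2*3}, Lemma~\ref{lem2.4}, \eqref{eq2.c}, \eqref{eq2-8}, \eqref{eq-phi} and its $(k+1)$-weighted variant) are identical. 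Your Case~1 shortcut---applying Wilson's pointwise bound $\saj\ls 2^{j(3n/2+\az)}\sa$ to $F_x$ to get $[b,\ga](f)\ls[b,\sa](f)$ directly when $\lz>3+2\az/n$, and then invoking Theorem~\ref{t2.2}---is a clean simplification that bypasses the $f_1/f_2$ split entirely in that range.
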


\begin{remark}\label{rem-bga}
In \cite{w12},
Wang established the boundedness of $\ga$ and
$[b,\ga]$ on weighted Morrey space $\mathcal{M}^{p,\kappa}_w(\rn)$ with
$\lz>\max\{3,p\}$.
This corresponds to Theorem \ref{t2.3} and Proposition \ref{t2.4} in the case when
$\vz$ is as in \eqref{wtp}
with $w\in A_p(\rn)$, $p\in (1,\fz)$ and $\phi$ as in Remark \ref{rem-mor}{\rm(ii)}.
We point out that Theorem \ref{t2.3} and Proposition \ref{t2.4}, even for this special case,
also improve the range of
$\lz>p$ in \cite{w12} to a wider range $\lz>3+2\az/n$ when $p>3+2\az/n$.
\end{remark}

\section{Boundedness of intrinsic Littlewood-Paley functions on weighted
Orlicz-Morrey spaces\label{s3}}

In this section, motivated by Nakai \cite{n08},
we introduce the weighted Orlicz-Morrey space $\wmor$ and establish
the boundedness on $\wmor$ of intrinsic Littlewood-Paley functions
and their commutators with $\bmo$ functions.

Recall that a function $\Phi:[0,\fz)\to [0,\fz)$ is called
a \emph{Young function} (or \emph{N-function}),
if it is increasing and convex, and satisfies
that
$$\Phi(0)=0,\ \lim_{t\to 0}\Phi(t)/t=0\ {\rm and}\
\lim_{t\to\fz}\Phi(t)/t=\fz$$
(see, for example, \cite[p.\,436]{fk97}).
Obviously, any Young function is continuous and strictly increasing,
and hence bijective.
The \emph{complementary function} of $\Phi$ is defined by, for all $r\in[0,\fz)$,
\begin{eqnarray*}
\wz{\Phi}(r):=\sup_{s\in[0,\fz)}\{rs-\Phi(s)\}.
\end{eqnarray*}
It is well known that $\wz{\Phi}$ is also a Young function and, for all $r\in(0,\fz)$,
\begin{eqnarray}\label{Phi-2}
r\le\Phi^{-1}(r)\wz{\Phi}^{-1}(r)\le 2r
\end{eqnarray}
(see, for example, \cite[pp.\,13--14]{Rao-Ren1991}),
where $\Phi^{-1}$ denotes the \emph{inverse function} of $\Phi$.
Moreover, by Lemma \ref{c-f-p}{\rm(ii)} with $\vz(x,t):=\Phi(t)$
for all $x\in\rn$ and $t\in[0,\fz)$,
we know that,
if $\Phi$ is of lower type $p_0$ and upper type $p_1$ with $1<p_0\le p_1<\fz$,
then $\wz{\Phi}$ is of lower type $p'_1$ and upper type $p'_0$.
In this case, $\Phi\in\Delta_2\cap\nabla_2$
(see \cite{Rao-Ren1991} for the definitions of the conditions $\Delta_2$ and $\nabla_2$).
Conversely, if $\Phi\in\Delta_2\cap\nabla_2$, then
$\Phi$ is of lower type $p_0$ and upper type $p_1$ for some $p_0$ and $p_1$
with $1<p_0\le p_1<\fz$ (see \cite[Lemma~1.3.2]{KK1991}).

\begin{definition}\label{d-wmor}
Let $\Phi$ be a Young function,
$\phi: (0,\fz)\to(0,\fz)$ be nonincreasing and $w\in A_\fz(\rn)$.
The {\it weighted Orlicz-Morrey space $\wmor$} is defined by
$$\wmor:=\{f\in L^1_{\rm{loc}}(\rn):
\ \|f\|_{\wmor}:= \sup_{B\subset\rn}\|f\|_\lpb<\fz\},$$
where the supremum is taken over all balls $B$ of $\rn$ and
$$\|f\|_\lpb:=\inf\lf\{\mu\in(0,\fz):\
\frac{1}{w(B)\phi(w(B))}\int_{B}\Phi\lf(\frac{|f(x)|}{\mu}\r)w(x)\,dx\le 1\r\}.$$
Here and in what follows, for any ball $B$ of $\rn$ and $w\in A_\fz(\rn)$,
$$w(B):=\int_B w(x)\,dx.$$
\end{definition}

\begin{remark}\label{rem-wmor}
(i) Since $\Phi$ is convex, we know that $\|\cdot\|_{\wmor}$ is a norm.

(ii)
If a function $\Phi:[0,\fz)\to[0,\fz)$ is quasi-convex, namely,
there exist a convex function $\Phi_0$ and a positive constant $C$ such that
$$
 \Phi_0(C^{-1}r)\le\Phi(r)\le\Phi_0(Cr)\quad{\rm for\ all}\  r\in[0,\fz),
$$
then the corresponding functionals $\|\cdot\|_{\wmor}$
and $\|\cdot\|_{M^{\Phi_0,\phi}_{w}}$ are equivalent.
Therefore, all the results in this section also hold true
for any quasi-convex function
which is of lower type $p_0$ and upper type $p_1$
with $1<p_0\le p_1<\fz$.

(iii) If $w\equiv 1$, then $\wmor$ coincides with
the Orlicz-Morrey space $L^{(\Phi,\phi)}(\rn)$ in \cite{n08}
with equivalent norms.

(iv)
If $\phi(r):=1/r$ for all $r\in (0,\fz)$, then $\wmor$ coincides with
the weighted Orlicz space $L^{\Phi}_w(\rn)$.
In this case, $\phi$ satisfies the assumptions for all the theorems in this section.
Therefore, all the results in this section hold true for any $L^{\Phi}_w(\rn)$
with Young function $\Phi$ of lower type $p_0$ and upper type $p_1$
and $w\in A_{p_0}(\rn)$, where $1<p_0\le p_1<\fz$.

(v) Even if
\begin{eqnarray}\label{phiw}
\vz(x,t):=w(x)\Phi(t)\quad\mathrm{for\ all}\ x\in\rn\ \mathrm{and}\ t\in[0,\fz),
\end{eqnarray}
with $\Phi$ being a Young function and $w\in A_\fz(\rn)$,
$\mor$ as in Section \ref{s2} may not coincide with $\wmor$.
\end{remark}

Before proving the main results of this section, we first state the
following technical lemmas
whose proofs are, respectively, similar to those
of Lemmas \ref{lem2.2}, \ref{lem2.3} and \ref{lem2.4},
the details being omitted.
\begin{lemma}\label{lem3.1}
Let $\Phi$ be a Young function which is of lower type $p_0$ and upper type $p_1$
with $0<p_0\le p_1<\fz$ and
$\phi$, $w$ be as in Definition \ref{d-wmor}.
Let $\wz{C}$ be a positive constant.
Then there exists a positive constant $C$ such that

{\rm (i)} for any ball $B$ of $\rn$ and $\mu\in (0,\fz)$,
$$\frac{1}{w(B)\phi(w(B))}\int_B\Phi\lf(\frac{|f(x)|}{\mu}\r)w(x)\,dx\le \wz{C}$$
implies that $\|f\|_\lpb\le C\mu;$

{\rm (ii)} for any ball $B$ of $\rn$ and $\mu\in(0,\fz)$,
$$\frac{1}{w(B)\phi(w(B))}\int_B\wz{\Phi}\lf(\frac{|f(x)|}{\mu}\r)w(x)\,dx\le \wz{C}$$
implies that
$\|f\|_{\wz{\Phi},\phi,B}\le C\mu.$
\end{lemma}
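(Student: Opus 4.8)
The plan is to mimic the proofs of Lemmas \ref{lem2.2}, \ref{lem2.3} and \ref{lem2.4}, which themselves follow \cite{ky}, replacing the weighted measure $\vz(x,1)\,dx$ by $w(x)\,dx$ and the normalization factor $\vz(B,1)$ by $w(B)\phi(w(B))$. Recall that, for a ball $B$ of $\rn$ and a measurable function $f$, the modular
$$
\rho_B(f):=\frac{1}{w(B)\phi(w(B))}\int_B\Phi(|f(x)|)\,w(x)\,dx
$$
is used to define $\|f\|_\lpb$ in Definition~\ref{d-wmor}; the point of part (i) is the standard fact that, although $\Phi$ is only assumed to be a Young function and we do not have two-sided control of the modular by the norm, the lower and upper type conditions on $\Phi$ give one-sided comparisons that suffice. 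For part (ii) the complementary function $\wz\Phi$ is also a Young function (and, when $1<p_0\le p_1<\fz$, of lower type $p_1'$ and upper type $p_0'$ by Lemma~\ref{c-f-p}{\rm(ii)} applied with $\vz(x,t):=\Phi(t)$), so the same argument applies verbatim to $\wz\Phi$.

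For (i), first I would dispose of the trivial case $\|f\|_\lpb=0$, in which the conclusion is immediate; so assume $0<\|f\|_\lpb<\fz$. Pick $\mu\in(0,\fz)$ with $\rho_B(f/\mu)\le\wz C$. If $\mu\ge\|f\|_\lpb$ there is nothing to prove, so suppose $\mu<\|f\|_\lpb$ and set $\lz:=\|f\|_\lpb/\mu\in(1,\fz)$. Writing $|f(x)|/\|f\|_\lpb=\lz^{-1}\,(|f(x)|/\mu)$ and using that $\Phi$ is of lower type $p_0$ — so $\Phi(\lz^{-1}s)\le C_{p_0}\lz^{-p_0}\Phi(s)$ for all $s\in[0,\fz)$, since $\lz^{-1}\in(0,1]$ — one gets
$$
\rho_B\!\lf(\frac{f}{\|f\|_\lpb}\r)\le C_{p_0}\lz^{-p_0}\,\rho_B\!\lf(\frac{f}{\mu}\r)\le C_{p_0}\wz C\,\lz^{-p_0}.
$$
On the other hand, by continuity of $\Phi$ and the definition of the Luxembourg norm (for a Young function the infimum defining $\|f\|_\lpb$ is attained, so $\rho_B(f/\|f\|_\lpb)=1$), the left-hand side equals $1$, whence $\lz^{p_0}\le C_{p_0}\wz C$, i.e. $\|f\|_\lpb\le (C_{p_0}\wz C)^{1/p_0}\mu=:C\mu$ with $C$ independent of $f$, $B$ and $\mu$. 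This proves (i). Part (ii) is identical, with $\Phi$ replaced by $\wz\Phi$ and $p_0$ by $\wz\Phi$'s lower type exponent (which is $p_1'$ under $1<p_0\le p_1<\fz$, and in general some positive number coming from the type hypotheses one places on $\wz\Phi$); one again uses that $\wz\Phi$, being a Young function, is continuous with the defining infimum attained, so $\rho_B^{\wz\Phi}(f/\|f\|_{\wz\Phi,\phi,B})=1$.

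The only genuinely delicate point is justifying that the infimum in the Luxembourg norm is attained, i.e. that $\rho_B(f/\|f\|_\lpb)=1$ and $\rho_B^{\wz\Phi}(f/\|f\|_{\wz\Phi,\phi,B})=1$ when the norms are finite and nonzero; this is exactly the content of the analogue of Lemma~\ref{lem2.3} for the weighted Orlicz-Morrey setting, which is stated (with proof omitted) just after Lemma~\ref{lem3.1} in the paper, and it follows from the monotone convergence theorem together with continuity of $\Phi$ (respectively $\wz\Phi$) — both of which hold since Young functions are continuous. Once that is in hand the above is a short and routine computation, so I expect no real obstacle beyond bookkeeping; the proof is therefore omitted in the paper, as the statement indicates.
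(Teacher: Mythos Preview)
Your argument is correct and follows a standard route; since the paper omits the proof entirely (deferring to Lemma~\ref{lem2.2}, whose proof is in turn deferred to \cite[Lemma~4.3]{ky}), there is nothing concrete to compare against. One minor point worth tightening: your case split tacitly assumes $\|f\|_\lpb<\fz$, which you do not justify --- but in fact the direct choice $C:=\max\{1,(C_{p_0}\wz C)^{1/p_0}\}$ gives, by the lower-type inequality, $\rho_B(f/(C\mu))\le C_{p_0}C^{-p_0}\wz C\le1$, whence $\|f\|_\lpb\le C\mu$ straight from the definition; this handles the a~priori possibility $\|f\|_\lpb=\fz$ and avoids the forward reference to Lemma~\ref{lem3.2}.
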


\begin{lemma}\label{lem3.2}
Let $\Phi$ be as in Lemma \ref{lem3.1} and $\phi$, $w$ as in Definition \ref{d-wmor}.
Then, for any ball $B$ of $\rn$ and $\|f\|_\lpb\neq 0$,
it holds true that
$$\frac{1}{w(B)\phi(w(B))}\int_B\Phi\lf(\frac{|f(x)|}{\|f\|_\lpb}\r)w(x)\,dx=1$$
and, for all $\|f\|_{\wz{\Phi},\phi,B}\neq 0$, it holds true that
$$\frac{1}{w(B)\phi(w(B))}\int_B\wz{\Phi}\lf(\frac{|f(x)|}{\|f\|_{\wz{\Phi},\phi,B}}\r)w(x)\,dx=1.$$
\end{lemma}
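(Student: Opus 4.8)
The plan is to fix a ball $B\st\rn$ and study the single-variable modular
$$
\rho(\mu):=\frac{1}{w(B)\phi(w(B))}\int_{B}\Phi\lf(\frac{|f(x)|}{\mu}\r)w(x)\,dx,\qquad\mu\in(0,\fz),
$$
(the normalizing constant $w(B)\phi(w(B))$ lies in $(0,\fz)$ since $B$ is a ball and $w\in A_\fz(\rn)$), and to show that, reading the hypothesis as $\|f\|_\lpb\in(0,\fz)$, the equation $\rho(\mu)=1$ is solved by $\mu=\|f\|_\lpb$. Note first that $\|f\|_\lpb\neq0$ forces $f$ to be nonzero on a subset of $B$ of positive Lebesgue measure, hence of positive $w$-measure, because an $A_\fz(\rn)$ weight is positive almost everywhere; otherwise $\rho\equiv0$ and $\|f\|_\lpb=0$. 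The second identity will follow by repeating the whole argument with $\Phi$ replaced by its complementary function $\wz{\Phi}$ and $\phi,w$ left unchanged, using that $\wz{\Phi}$ is again a Young function sharing all the qualitative features exploited below (by the discussion preceding Definition~\ref{d-wmor}, when $1<p_0\le p_1<\fz$ it is of lower type $p'_1$ and upper type $p'_0$).

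The substantive input is that $\rho$ is finite on all of $(0,\fz)$. By definition of the Luxemburg functional there is some $\mu_0\in(0,\fz)$ with $\rho(\mu_0)\le1$, so $\Phi(|f|/\mu_0)$ is integrable against $w\,dx$ over $B$; for $0<\mu\le\mu_0$ the uniformly upper type $p_1$ of $\Phi$ gives $\Phi(|f(x)|/\mu)=\Phi\big((\mu_0/\mu)(|f(x)|/\mu_0)\big)\ls(\mu_0/\mu)^{p_1}\Phi(|f(x)|/\mu_0)$, so $\rho(\mu)<\fz$, while $\rho(\mu)\le\rho(\mu_0)<\fz$ for $\mu>\mu_0$ by monotonicity of $\Phi$. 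Granting this, $\rho$ is strictly decreasing (since $\Phi$ is strictly increasing and $f\neq0$ on a set of positive measure) and continuous on $(0,\fz)$: on any $[a,b]\st(0,\fz)$ one dominates $\Phi(|f|/\mu)$ by the integrable function $\Phi(|f|/a)$ and applies the dominated convergence theorem together with the continuity of $\Phi$. Moreover, convexity of $\Phi$ yields $\Phi(|f|/\mu)\le(\mu_0/\mu)\Phi(|f|/\mu_0)\to0$ pointwise as $\mu\to\fz$, so $\lim_{\mu\to\fz}\rho(\mu)=0$ by dominated convergence; and on $\{x\in B:f(x)\neq0\}$, a set of positive $w$-measure, $\Phi(|f(x)|/\mu)\to\fz$ as $\mu\to0^+$ because $\lim_{t\to\fz}\Phi(t)=\fz$, so $\lim_{\mu\to0^+}\rho(\mu)=\fz$ by the monotone convergence theorem.

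With these facts the conclusion is immediate from the intermediate value theorem: $\{\mu\in(0,\fz):\rho(\mu)\le1\}=[\mu^\ast,\fz)$ for a unique $\mu^\ast\in(0,\fz)$ with $\rho(\mu^\ast)=1$, and by Definition~\ref{d-wmor} this $\mu^\ast$ is precisely $\|f\|_\lpb$; the identical argument for $\wz{\Phi}$ gives the $\wz{\Phi}$-identity. The only step that genuinely uses the structural hypotheses, rather than mere convexity and continuity of a Young function, is the finiteness of $\rho$ on the whole half-line: that is where the uniformly upper type $p_1<\fz$ (a $\Delta_2$-type condition) enters, and without it $\rho$ could drop from a value strictly below $1$ directly to $+\fz$, giving only $\rho(\|f\|_\lpb)\le1$. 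For the $\wz{\Phi}$-half the analogous role is played by the lower type $p_0>1$ of $\Phi$, which is why that part naturally lives in the regime $1<p_0\le p_1<\fz$. This is the point I expect to be the main obstacle to writing the argument cleanly.
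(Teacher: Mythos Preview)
Your argument is correct and is precisely the standard continuity/monotonicity argument for Luxemburg norms that the paper has in mind: the paper omits the proof entirely, saying only that it is ``similar to that of Lemma~\ref{lem2.3}'', which in turn is deferred to \cite[Lemma~4.2]{ky}, and that reference proves exactly the modular identity via the same route you take (finiteness of the modular from the $\Delta_2$/upper-type bound, continuity by dominated convergence, strict monotonicity, and the intermediate value theorem). Your closing remark that the $\wz{\Phi}$-half genuinely needs $p_0>1$ is also on point; the paper is slightly loose here, since Lemma~\ref{lem3.1} allows $0<p_0$, but every application in Section~\ref{s3} assumes $1<p_0\le p_1<\fz$, which is the regime in which $\wz{\Phi}$ inherits finite upper type $p_0'$ and your argument goes through verbatim.
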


\begin{lemma}\label{lem3.3}
Let $\Phi$ be as in Lemma \ref{lem3.1}
and $\phi$, $w$ as in Definition \ref{d-wmor}.
Then, for any ball $B$ of $\rn$
and $f,g\in L_\loc^1(\rn)$,
$$\frac{1}{w(B)\phi(w(B))}\int_B|f(x)||g(x)|w(x)\,dx\le 2\|f\|_\lpb\|g\|_{\wz{\Phi},\phi,B}.$$
\end{lemma}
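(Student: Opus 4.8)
The plan is to repeat, almost verbatim, the argument used for Lemma \ref{lem2.4}, with the pointwise Young-type inequality now supplied by the classical Young inequality $rs\le\Phi(r)+\wz{\Phi}(s)$ for all $r,s\in[0,\fz)$, which is immediate from the definition $\wz{\Phi}(s):=\sup_{r\in[0,\fz)}\{rs-\Phi(r)\}$. First I would dispose of the degenerate cases: if $\|f\|_\lpb=0$, then, since $w\in A_\fz(\rn)$ guarantees that $w\,dx$ and Lebesgue measure are mutually absolutely continuous on $B$, the definition of $\|\cdot\|_\lpb$ forces $f=0$ almost everywhere on $B$, so both sides of the asserted inequality vanish; the case $\|g\|_{\wz{\Phi},\phi,B}=0$ is handled the same way. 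Hence one may assume $\|f\|_\lpb\,\|g\|_{\wz{\Phi},\phi,B}\neq0$, and both quantities are finite by hypothesis.

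Next, for almost every $x\in B$, I would apply the Young inequality with $r:=|f(x)|/\|f\|_\lpb$ and $s:=|g(x)|/\|g\|_{\wz{\Phi},\phi,B}$ to get
$$\frac{|f(x)|}{\|f\|_\lpb}\,\frac{|g(x)|}{\|g\|_{\wz{\Phi},\phi,B}}\le\Phi\lf(\frac{|f(x)|}{\|f\|_\lpb}\r)+\wz{\Phi}\lf(\frac{|g(x)|}{\|g\|_{\wz{\Phi},\phi,B}}\r).$$
Then I would multiply through by $w(x)$, integrate over $B$, and divide by $w(B)\phi(w(B))$. On the right-hand side the first term becomes the normalized $\Phi$-modular of $f/\|f\|_\lpb$ over $B$, which equals $1$ by the first identity in Lemma \ref{lem3.2}, and the second term becomes the normalized $\wz{\Phi}$-modular of $g/\|g\|_{\wz{\Phi},\phi,B}$ over $B$, which equals $1$ by the second identity in Lemma \ref{lem3.2} (applicable because $\wz{\Phi}$ is itself a Young function, being of lower type $p_1'$ and upper type $p_0'$ by Lemma \ref{c-f-p}{\rm(ii)}). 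This yields
$$\frac{1}{w(B)\phi(w(B))}\int_B\frac{|f(x)|}{\|f\|_\lpb}\,\frac{|g(x)|}{\|g\|_{\wz{\Phi},\phi,B}}\,w(x)\,dx\le2,$$
and clearing the (constant) denominators gives exactly the claimed bound with constant $2$.

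I do not expect any genuine obstacle here; the only points requiring a line of care are the degenerate-case argument above and the verification that Lemma \ref{lem3.2} applies to both $\Phi$ and $\wz{\Phi}$ (which rests on their being Young functions of the stated types, hence satisfying the doubling needed for the Luxembourg modular to attain the value $1$). I would also note in passing that, in contrast to Lemma \ref{lem2.4} where only a $\ls$ was recorded, the convexity of $\Phi$ makes Lemma \ref{lem3.2} an exact identity, so the sharp constant $2$ comes out of the computation directly.
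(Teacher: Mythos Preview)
Your proof is correct and follows exactly the approach the paper indicates: the paper omits the proof of Lemma~\ref{lem3.3} but states that it is ``similar to'' that of Lemma~\ref{lem2.4}, and your argument is precisely the natural adaptation of that proof---pointwise Young inequality, integration against $w(x)\,dx$, normalization by $w(B)\phi(w(B))$, and an appeal to both identities in Lemma~\ref{lem3.2}. Your handling of the degenerate cases and the remark that convexity of $\Phi$ gives the sharp constant $2$ (rather than the $\lesssim$ recorded in Lemma~\ref{lem2.4}) are both appropriate.
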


One of the main results of this section is as follows.
\begin{theorem}\label{t3.1}
Let $\az\in(0,1]$, $\Phi$ be a Young function which is of upper type $p_1$ and lower type $p_0$
with $1<p_0\le p_1<\fz$, $w\in A_{p_0}(\rn)$ and
$\phi$ be nonincreasing.
Assume that there exists a positive constant $\wz C$ such that,
for all $0<r\le s<\fz$,
$$\int_r^{\fz}\frac{\phi(t)}{t}\,dt\le \wz C\phi(r)\ \mbox{ and }\
\phi(r)r\le \wz C \phi(s)s.$$
Then there exists a positive constant $C$ such that, for all $f\in\wmor$,
$$\|\sa(f)\|_{\wmor}\le C\|f\|_{\wmor}.$$
\end{theorem}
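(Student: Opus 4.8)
The plan is to follow the proof of Theorem~\ref{t2.1}, with the Musielak weight $\vz(\cdot,1)$ replaced by the honest weight $w$ and with the parameter function $\phi$ now absorbed into the local functional $\|\cdot\|_\lpb$. Fix a ball $B:=B(x_0,r_B)\st\rn$ and decompose $f=f\chi_{2B}+f\chi_{(2B)^\com}=:f_1+f_2$. Since $\sa$ is sublinear and, because $\Phi$ is convex (Remark~\ref{rem-wmor}(i)), $\|\cdot\|_\lpb$ is subadditive, it suffices to prove $\|\sa(f_1)\|_\lpb\ls\|f\|_{\wmor}$ and $\|\sa(f_2)\|_\lpb\ls\|f\|_{\wmor}$ and then take the supremum over all balls $B$.

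For $f_1$, I would note that the function $(x,t)\mapsto w(x)\Phi(t)$ is a growth function as in Definition~\ref{d-vz} with $1<p_0\le p_1<\fz$ that lies in $\aa_{p_0}(\rn)$ (because $w\in A_{p_0}(\rn)$), so the $\sa$-part of Proposition~\ref{pro-vz} gives $\int_\rn\Phi(\sa(f_1)(x))\,w(x)\,dx\ls\int_{2B}\Phi(|f(x)|)\,w(x)\,dx$. Setting $\mu:=\|f\|_{\Phi,\phi,2B}$ and using the homogeneity of $\sa$, Lemma~\ref{lem3.2}, the doubling of $w$ and the monotonicity of $\phi$ (so that $w(2B)\phi(w(2B))\ls w(B)\phi(w(B))$), one sees that $\frac{1}{w(B)\phi(w(B))}\int_B\Phi(\sa(f_1)(x)/\mu)\,w(x)\,dx$ is bounded by an absolute constant, whence Lemma~\ref{lem3.1}(i) yields $\|\sa(f_1)\|_\lpb\ls\|f\|_{\Phi,\phi,2B}\le\|f\|_{\wmor}$.

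For $f_2$, the pointwise estimate \eqref{eq2.1} (whose proof uses only the uniform boundedness of the kernels in $\ca$) remains valid and gives $\sa(f_2)(x)\ls\sum_{k=1}^\fz|2^{k+1}B|^{-1}\int_{2^{k+1}B}|f(z)|\,dz$ for all $x\in B$. The crucial new point is the estimate, valid for every ball $Q\st\rn$,
$$\frac{1}{|Q|}\int_Q|f(z)|\,dz\ls\Phi^{-1}(\phi(w(Q)))\,\|f\|_{\wmor}.$$
To prove it I would write $|f|=|f|\,w^{-1}\,w$, apply the generalized H\"older inequality Lemma~\ref{lem3.3} with second factor $1/w$, and thus reduce to $\frac{w(Q)\phi(w(Q))}{|Q|}\|1/w\|_{\wz{\Phi},\phi,Q}\ls\Phi^{-1}(\phi(w(Q)))$, which by \eqref{Phi-2} is equivalent to $\frac{w(Q)}{|Q|}\|1/w\|_{\wz{\Phi},\phi,Q}\ls 1/\wz{\Phi}^{-1}(\phi(w(Q)))$. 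The latter I would verify by testing the functional in Lemma~\ref{lem3.1}(ii) at $\mu=c|Q|\,[\,w(Q)\wz{\Phi}^{-1}(\phi(w(Q)))\,]^{-1}$: by the uniformly lower type $p_1'$ and upper type $p_0'$ of $\wz{\Phi}$ (Lemma~\ref{c-f-p}(ii)) the integrand splits into a sum of two powers, whose integrals are controlled, for $c$ large enough, by the $A_{p_0}(\rn)$-characteristic of $w$ (equivalently, by $w^{1-p_0'},\,w^{1-p_1'}\in A_\infty(\rn)$).

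Combining these, $\sa(f_2)(x)\ls\|f\|_{\wmor}\sum_{k=1}^\fz\Phi^{-1}(\phi(w(2^{k+1}B)))$ for all $x\in B$, and it remains to sum the series. Since $w\in A_{p_0}(\rn)\st{\rm RH}_r(\rn)$ for some $r\in(1,\fz)$, choosing $j_0\in\nn$ large exactly as in \eqref{j0} gives $\log(w(2^{(k+1)j_0}B)/w(2^{kj_0}B))\gs1$ for all $k$; grouping the sum in blocks of length $j_0$ and comparing with an integral yields $\sum_k\Phi^{-1}(\phi(w(2^{k+1}B)))\ls\Phi^{-1}(\phi(w(B)))+\int_{w(B)}^\fz\frac{\Phi^{-1}(\phi(t))}{t}\,dt$. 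Since, by the upper type $p_1$ of $\Phi$, the inverse $\Phi^{-1}$ is of lower type $1/p_1$, and $\phi$ is nonincreasing, the integral is $\ls\Phi^{-1}(\phi(w(B)))\,\phi(w(B))^{-1/p_1}\int_{w(B)}^\fz\frac{\phi(t)^{1/p_1}}{t}\,dt\ls\Phi^{-1}(\phi(w(B)))$, the last step using that the two hypotheses on $\phi$ force $\phi(r)\sim\int_r^\fz\frac{\phi(t)}{t}\,dt$ and hence $\int_r^\fz\frac{\phi(t)^{\theta}}{t}\,dt\ls\phi(r)^{\theta}$ for every $\theta\in(0,1]$. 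Thus $\sa(f_2)$ is bounded on $B$ by $C\,\Phi^{-1}(\phi(w(B)))\,\|f\|_{\wmor}$, so $\|\sa(f_2)\|_\lpb\ls\|f\|_{\wmor}$, which together with the bound for $f_1$ finishes the proof. I expect the main obstacle to be precisely the two estimates $\frac{w(Q)\phi(w(Q))}{|Q|}\|1/w\|_{\wz{\Phi},\phi,Q}\ls\Phi^{-1}(\phi(w(Q)))$ and $\int_{w(B)}^\fz\frac{\Phi^{-1}(\phi(t))}{t}\,dt\ls\Phi^{-1}(\phi(w(B)))$ — disentangling the weight $w$, the complementary Young function $\wz{\Phi}$, and the parameter function $\phi$ — where the $A_{p_0}$-condition, the type conditions on $\Phi$ and $\wz{\Phi}$, \eqref{Phi-2}, and both assumptions on $\phi$ all enter.
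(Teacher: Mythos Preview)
Your proof is correct and follows essentially the same route as the paper: the same decomposition $f=f_1+f_2$, the same use of Proposition~\ref{pro-vz} (with $\vz(x,t)=w(x)\Phi(t)$) and Lemma~\ref{lem3.2} for $f_1$, and for $f_2$ the same chain \eqref{eq2.1} $\to$ Lemma~\ref{lem3.3} $\to$ estimate \eqref{eq3.c} $\to$ \eqref{Phi-2} $\to$ the dyadic-block summation \eqref{Phi-3}. The only difference is at the final integral inequality $\int_{w(B)}^\infty\Phi^{-1}(\phi(t))\,t^{-1}\,dt\ls\Phi^{-1}(\phi(w(B)))$: the paper simply quotes \cite{n08} for \eqref{Phi-4}, whereas you give a self-contained argument via the lower type $1/p_1$ of $\Phi^{-1}$ and the bound $\int_r^\infty\phi(t)^\theta t^{-1}\,dt\ls\phi(r)^\theta$; the latter is true, but note that it really follows from the \emph{single} hypothesis $\int_r^\infty\phi(t)t^{-1}\,dt\le\wz C\phi(r)$ (which forces geometric decay $\phi(2^kr)\ls\rho^k\phi(r)$ for some $\rho<1$), not from the equivalence $\phi(r)\sim\int_r^\infty\phi(t)t^{-1}\,dt$ as such.
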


\begin{proof}
Fix any ball $B:=B(x_0,r_{B})$, with $x_0\in\rn$ and $r_B\in (0,\fz)$,
and decompose 
$$f=f{\chi_{2B}}+f{\chi_{(2B)^\com}}=:f_1+f_2.$$
Since, for any $\az\in (0,1]$, $\sa$ is sublinear, we see that,
for all $x\in B$,
$$\sa(f)(x)\le \sa(f_1)(x)+\sa(f_2)(x).$$

Let $\mu:=\|f\|_{\Phi,\phi,2B}$.
From Proposition \ref{pro-vz} with $\vz$ being as in \eqref{phiw},
it follows that
$$\int_\rn \Phi(\sa(f)(x))w(x)\,dx\ls \int_\rn \Phi(|f(x)|)w(x)\,dx,$$
which, together with Lemma \ref{lem3.2} and the fact that
$\phi$ is decreasing, further implies that
\begin{eqnarray*}
&&\frac{1}{w(B)\phi(w(B))}\int_{B}\Phi\lf(\frac{\sa(f_1)(x)}{\mu}\r)w(x)\,dx\\
&&\hs\ls\frac{1}{w(B)\phi(w(B))}\int_{\rn}\Phi\lf(\frac{|f_1(x)|}{\mu}\r)w(x)\,dx\\
&&\hs\sim\frac{1}{w(B)\phi(w(B))}\int_{2B}\Phi\lf(\frac{|f(x)|}{\mu}\r)w(x)\,dx
\ls\frac{w(2B)\phi(w(2B))}{w(B)\phi(w(B))}\ls 1.
\end{eqnarray*}
By this and Lemma \ref{lem3.1}(i), we have
$\|\sa(f_1)\|_\lpb \ls\|f\|_{\Phi,\phi,2B}$.
Therefore,
\begin{eqnarray}\label{eq3.0}
\|\sa(f_1)\|_\lpb \ls \|f\|_\wmor.
\end{eqnarray}

From \eqref{eq2.1} and Lemma \ref{lem3.3}, it follows that, for all $x\in B$,
\begin{eqnarray}\label{eq3.1}
\sa(f_2)(x)
\ls\sum_{k=1}^{\fz}\frac{w(2^{k+1}B)\phi(w(2^{k+1}B))}{|2^{k+1}B|}\|f\|_{\Phi,\phi,2^{k+1}B}
 \|w^{-1}\|_{\wz{\Phi},\phi,2^{k+1}B}.
\end{eqnarray}
By the fact that $\wz{\Phi}$ is of uniformly lower type $p'_1$
and upper type $p'_0$
and the fact that $w\in A_{p_0}(\rn)\st A_{p_1}(\rn)$, we know that
\begin{eqnarray*}
&&\frac{1}{w(2^{k+1}B)\phi(w(2^{k+1}B))}
 \int_{2^{k+1}B}\wz{\Phi}\lf(\frac{w(2^{k+1}B)\wz{\Phi}^{-1}(\phi(w(2^{k+1}B)))}{|2^{k+1}B|w(x)}\r)w(x)\,dx\\
&&\hs\ls \frac{1}{w(2^{k+1}B)}\int_{2^{k+1}B}
 \lf\{\lf[\frac{w(2^{k+1}B)}{|2^{k+1}B|w(x)}\r]^{p'_1}+
 \lf[\frac{w(2^{k+1}B)}{|2^{k+1}B|w(x)}\r]^{p'_0}\r\}w(x)\,dx\\
&&\hs\sim \lf[\frac{1}{|2^{k+1}B|}\int_{2^{k+1}B}w(x)\,dx\r]^{p'_0-1}
\frac{1}{|2^{k+1}B|}\int_{2^{k+1}B}[w(x)]^{1-p'_0}\,dx\\
&&\hs\hs + \lf[\frac{1}{|2^{k+1}B|}\int_{2^{k+1}B}w(x)\,dx\r]^{p'_1-1}
\frac{1}{|2^{k+1}B|}\int_{2^{k+1}B}[w(x)]^{1-p'_1}\,dx\ls 1.
\end{eqnarray*}
From this and Lemma \ref{lem3.1}(ii), it follows that
\begin{eqnarray}\label{eq3.c}
\frac{w(2^{k+1}B)\wz{\Phi}^{-1}(\phi(w(2^{k+1}B)))}{|2^{k+1}B|}
\|w^{-1}\|_{\wz{\Phi},\phi,2^{k+1}B}\ls 1.
\end{eqnarray}
By this, \eqref{eq3.1} and \eqref{Phi-2}, we conclude that, for all $x\in B$,
\begin{eqnarray}\label{eq3.9}
\sa(f_2)(x)
&\ls&\sum_{k=1}^{\fz}\|f\|_\wmor\frac{\phi(w(2^{k+1}B))}{\wz{\Phi}^{-1}(\phi(w(2^{k+1}B)))}\noz\\
&\ls&\sum_{k=1}^{\fz}\|f\|_\wmor\Phi^{-1}(\phi(w(2^{k+1}B))).
\end{eqnarray}
Recall that, by \eqref{j0} with $\vz$ as in \eqref{phiw},
there exists some $j_0\in\nn$ such that, for all $k\in\nn$,
$$1\ls \log\lf(\frac{w(2^{(k+1)j_0}B)}{w(2^{kj_0}B)}\r).$$
Moreover, by this, the fact that $\Phi^{-1}(\phi(\cdot))$ is decreasing
and the assumptions of $\phi$, we have
\begin{eqnarray}\label{Phi-3}
&&\hspace{-0.25cm}\sum_{k=1}^\fz\Phi^{-1}(\phi(w(2^{k+1}B)))\noz\\
&&=\sum_{l=0}^\fz\sum_{i=lj_0+1}^{(l+1)j_0}\Phi^{-1}(\phi(w(2^iB)))\noz\\
&&\ls \sum_{i=1}^{j_0}\Phi^{-1}(\phi(w(2^iB)))
+j_0\sum_{l=1}^\fz\Phi^{-1}(\phi(w(2^{lj_0}B)))\noz\\
&&\ls \Phi^{-1}(\phi(w(B))) + \sum_{l=1}^\fz\Phi^{-1}(\phi(w(2^{lj_0}B)))
\int_{w(2^{(l-1)j_0}B)}^{w(2^{lj_0}B)}\dt\noz\\
&&\ls\Phi^{-1}(\phi(w(B)))+
\sum_{l=1}^\fz \int_{w(2^{(l-1)j_0}B)}^{w(2^{lj_0}B)}\frac{\Phi^{-1}(\phi(t))}{t}\,dt\noz\\
&&\ls\Phi^{-1}(\phi(w(B)))+\int_{w(B)}^\fz\frac{\Phi^{-1}(\phi(t))}{t}\,dt
\ls\Phi^{-1}(\phi(w(B))),
\end{eqnarray}
where the last inequality is deduced from the fact that
\begin{eqnarray}\label{Phi-4}
\int_r^\fz\frac{\Phi^{-1}(\phi(t))}{t}\,dt\ls \Phi^{-1}(\phi(r))
\end{eqnarray}
(see \cite[Lemma 5.3]{n08} and the proof of \cite[Corollary 3.2]{n08}).
From \eqref{eq3.9} and \eqref{Phi-3}, it follows that, for all $x\in B$,
\begin{eqnarray*}
\sa(f_2)(x)\ls \Phi^{-1}(\phi(w(B)))\|f\|_\wmor,
\end{eqnarray*}
which further implies that
$$\Phi\lf(\frac{\sa(f_2)(x)}{\|f\|_\wmor}\r)\ls\phi(w(B)).$$
Therefore,
$$\frac{1}{w(B)\phi(w(B))}\int_{B}\Phi\lf(\frac{\sa(f_2)(x)}{\|f\|_\wmor}\r)w(x)\,dx \ls 1.$$
By this and Lemma \ref{lem3.1}(i), we have
\begin{eqnarray}\label{eq3.3}
\|\sa(f_2)\|_\lpb\ls\|f\|_\wmor,
\end{eqnarray}
which, together with \eqref{eq3.0}, completes the proof of Theorem \ref{t3.1}.
\end{proof}

For a Young function $\Phi$, a function $\phi:\mathbb{R}^n\times(0,\fz)\to(0,\fz)$ and
a weight $w$ on $\rn$, the \emph{space $\wz M^{\Phi,\phi}_w(\rn)$} is defined by a way same as Definition~\ref{d-wmor},
via using $\phi(c_B,w(B))$ instead of $\phi(w(B))$,
where $c_B$ is the center of the ball $B$.
Then, by an argument similar to that used in the proof of Theorem~\ref{t3.1},
we have the following boundedness of $\sa$ on $\wz M^{\Phi,\phi}_w(\rn)$, the details being omitted.

\begin{theorem}\label{t3.1v}
Let $\az\in(0,1]$, $\Phi$ be a Young function which is of upper type $p_1$ and lower type $p_0$
with $1<p_0\le p_1<\fz$ and $w\in A_{p_0}(\rn)$.
Assume that there exists a positive constant $\wz C$ such that,
for all $x\in\rn$ and $0<r\le s<\fz$,
$$
\int_r^{\fz}\frac{\phi(x,t)}{t}\,dt\le \wz C\phi(x,r),\
\phi(x,s)\le \wz C \phi(x,r)\ \mbox{ and }\
\phi(x,r)r\le \wz C \phi(x,s)s.
$$
Then there exists a positive constant $C$ such that, for all $f\in\wz M^{\Phi,\phi}_w(\rn)$,
$$\|\sa(f)\|_{\wz M^{\Phi,\phi}_w(\rn)}\le C\|f\|_{\wz M^{\Phi,\phi}_w(\rn)}.$$
\end{theorem}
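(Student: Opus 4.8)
The plan is to follow, almost verbatim, the proof of Theorem~\ref{t3.1}, exploiting the fact that all the dyadic dilates $2^jB$ of a fixed ball $B=B(x_0,r_B)$ share the common center $x_0$, so that the spatial variable of $\phi$ is effectively ``frozen'' at $x_0$ throughout the estimate attached to $B$. Concretely, I would fix $B=B(x_0,r_B)$ with $x_0\in\rn$ and $r_B\in(0,\fz)$, write $\phi_0(\cdot):=\phi(x_0,\cdot)$, and note that the three hypotheses of the theorem, specialized to $x=x_0$, say exactly that $\phi_0$ is almost decreasing, that $t\mapsto t\phi_0(t)$ is almost increasing, and that $\int_r^\fz\phi_0(t)/t\,dt\le\wz C\phi_0(r)$ for all $r\in(0,\fz)$, with $\wz C$ independent of $x_0$. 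Then I would decompose $f=f\chi_{2B}+f\chi_{(2B)^\com}=:f_1+f_2$ and use $\sa(f)\le\sa(f_1)+\sa(f_2)$ on $B$, exactly as in Theorem~\ref{t3.1}.

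For the local part, I would apply Proposition~\ref{pro-vz} with $\vz(x,t):=w(x)\Phi(t)$ as in \eqref{phiw}, together with Lemma~\ref{lem3.2} and the doubling bound $w(2B)\phi_0(w(2B))\ls w(B)\phi_0(w(B))$ --- which now follows from $w\in A_\fz(\rn)$ plus the almost-decreasing property of $\phi_0$, replacing the use of the monotonicity of $\phi$ in Theorem~\ref{t3.1} --- to get $\|\sa(f_1)\|_{\Phi,\phi,B}\ls\|f\|_{\wz M^{\Phi,\phi}_w(\rn)}$. For the tail part, I would reuse the pointwise bound \eqref{eq2.1}, invoke Lemma~\ref{lem3.3} to pass to $\sum_k\frac{w(2^{k+1}B)\phi_0(w(2^{k+1}B))}{|2^{k+1}B|}\|f\|_{\Phi,\phi,2^{k+1}B}\|w^{-1}\|_{\wz\Phi,\phi,2^{k+1}B}$, and then apply the estimate \eqref{eq3.c} (whose proof uses only $w\in A_{p_0}(\rn)$, the reverse H\"older inequality and the type conditions on $\wz\Phi$, and hence is unaffected by the variable $\phi$) together with \eqref{Phi-2} to arrive at the bound $\sa(f_2)(x)\ls\sum_{k=1}^\fz\|f\|_{\wz M^{\Phi,\phi}_w(\rn)}\Phi^{-1}(\phi_0(w(2^{k+1}B)))$, valid for all $x\in B$.

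The summation is then the same as in \eqref{Phi-3}: pick $j_0\in\nn$ via \eqref{j0} (applicable with $\vz$ as in \eqref{phiw}), split the series into blocks of length $j_0$ and compare with $\int_{w(B)}^\fz\Phi^{-1}(\phi_0(t))/t\,dt$, where the monotonicity of $\Phi^{-1}(\phi(\cdot))$ used in Theorem~\ref{t3.1} is replaced by the almost-monotonicity of $\Phi^{-1}(\phi_0(\cdot))$ (which holds since $\Phi^{-1}$ is increasing and $\phi_0$ is almost decreasing), and where the key inequality $\int_r^\fz\Phi^{-1}(\phi_0(t))/t\,dt\ls\Phi^{-1}(\phi_0(r))$ is the one-variable version of \cite[Lemma~5.3]{n08} and the proof of \cite[Corollary~3.2]{n08} applied to $\phi_0$. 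This gives $\sum_{k=1}^\fz\Phi^{-1}(\phi_0(w(2^{k+1}B)))\ls\Phi^{-1}(\phi_0(w(B)))$, hence $\Phi(\sa(f_2)(x)/\|f\|_{\wz M^{\Phi,\phi}_w(\rn)})\ls\phi_0(w(B))$ for $x\in B$; integrating against $w$ over $B$ and invoking Lemma~\ref{lem3.1}(i) yields $\|\sa(f_2)\|_{\Phi,\phi,B}\ls\|f\|_{\wz M^{\Phi,\phi}_w(\rn)}$. Adding the two pieces and taking the supremum over all $B$ completes the proof.

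The only genuinely new issue, and the point I would watch most carefully, is uniformity in the center $x_0$: every implicit constant above has to be independent of $x_0$, which is precisely why the hypotheses on $\phi$ are stated ``for all $x\in\rn$'' with a single constant $\wz C$, and why one must verify that the cited estimates \cite[Lemma~5.3 and Corollary~3.2]{n08} depend on $\phi$ only through that same triple of structural constants. Everything else is a routine transcription of the proof of Theorem~\ref{t3.1}, so I would present it briefly and omit the repeated computations.
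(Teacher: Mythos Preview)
Your proposal is correct and follows exactly the approach the paper itself indicates: the paper omits the proof of Theorem~\ref{t3.1v} entirely, stating only that it proceeds ``by an argument similar to that used in the proof of Theorem~\ref{t3.1}.'' You have correctly identified the one substantive point that makes the transcription work---namely that the dyadic dilates $2^{j}B$ all share the center $x_0$, so the $x$-variable in $\phi$ is frozen at $x_0$ throughout the estimate attached to $B$---and you have also correctly noted the minor adjustments (almost-decreasing in place of nonincreasing for $\phi_0$, and the need for uniformity of all constants in $x_0$, guaranteed by the single structural constant $\wz C$).
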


For example, let $\phi(x,r):=r^{\lambda(x)}$
for all $x\in\rn$ and $r\in (0,\fz)$ with $-1\le\lambda(x)<0$ and
$\sup_{x\in\rn}\lambda(x)<0$.
Then $\phi$ satisfies all the assumptions of Theorem \ref{t3.1v}.

Since $g_\az(f)$ is pointwise comparable to $\sa(f)$, we have the following
corollary of Theorem \ref{t3.1}, the details being omitted.

\begin{corollary}\label{cor-g2}
Let $\az\in(0,1]$, $\Phi$ be a Young function which is of upper type $p_1$ and lower type $p_0$
with $1<p_0\le p_1<\fz$, $w\in A_{p_0}(\rn)$ and
$\phi$ be nonincreasing.
Assume that there exists a positive constant $\wz C$ such that,
for all $0<r\le s<\fz$,
$$\int_r^{\fz}\frac{\phi(t)}{t}\,dt\le \wz C\phi(r)\ \mbox{ and }\
\phi(r)r\le \wz C \phi(s)s.$$
Then there exists a positive constant $C$ such that, for all $f\in\wmor$,
$$\|g_\az(f)\|_{\wmor}\le C\|f\|_{\wmor}.$$
\end{corollary}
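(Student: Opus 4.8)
The plan is to obtain this as an immediate corollary of Theorem~\ref{t3.1}, noting that the hypotheses on $\Phi$, $w$ and $\phi$ imposed here are precisely those of Theorem~\ref{t3.1}, and exploiting the pointwise comparability of the two operators. Recall (see \cite[p.\,774]{w07}) that there is a positive constant $c$, depending only on $n$ and $\az$, such that $g_\az(f)(x)\le c\,\sa(f)(x)$ for all $f\in L^1_\loc(\rn)$ and all $x\in\rn$; without loss of generality $c\ge1$. Only this one inequality (the control of $g_\az$ by $\sa$) is actually needed.

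First I would fix a ball $B\st\rn$ and, assuming $\|\sa(f)\|_\lpb\neq0$ (the case $\|\sa(f)\|_\lpb=0$ forces $\sa(f)=0$ a.e.\ on $B$ with respect to $w\,dx$, hence $g_\az(f)=0$ a.e.\ on $B$ since $w\in A_\fz(\rn)$ is positive a.e., and is trivial), set $\mu:=\|\sa(f)\|_\lpb$. Using $g_\az(f)(x)\le c\,\sa(f)(x)$, the monotonicity of $\Phi$ and the upper type $p_1$ of $\Phi$ (so that $\Phi(cs)\ls\Phi(s)$ for all $s\in[0,\fz)$), I would bound
$$\frac{1}{w(B)\phi(w(B))}\int_B\Phi\lf(\frac{g_\az(f)(x)}{\mu}\r)w(x)\,dx
\ls\frac{1}{w(B)\phi(w(B))}\int_B\Phi\lf(\frac{\sa(f)(x)}{\mu}\r)w(x)\,dx=1,$$
where the last equality is Lemma~\ref{lem3.2}. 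Lemma~\ref{lem3.1}(i) then gives $\|g_\az(f)\|_\lpb\ls\|\sa(f)\|_\lpb$, with implicit constant independent of $B$ and $f$.

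Taking the supremum over all balls $B\st\rn$ yields $\|g_\az(f)\|_\wmor\ls\|\sa(f)\|_\wmor$, and Theorem~\ref{t3.1} gives $\|\sa(f)\|_\wmor\ls\|f\|_\wmor$; combining these two estimates proves the corollary. I do not expect any genuine obstacle here: the only point requiring a moment's care is absorbing the comparability constant $c$ into $\Phi$ via the upper-type hypothesis, after which the modular estimate transfers verbatim along the pointwise inequality. In particular, no new Calder\'on--Zygmund-type decomposition of $f$ and no reworking of the proof of Theorem~\ref{t3.1} are needed, which is exactly why the details are omitted in the statement preceding the corollary.
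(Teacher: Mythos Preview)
Your proposal is correct and follows essentially the same approach as the paper, which simply remarks that ``$g_\az(f)$ is pointwise comparable to $\sa(f)$'' and then omits the details. You have supplied those details faithfully; in fact, since $\Phi$ is convex (Remark~\ref{rem-wmor}(i)), the pointwise inequality $g_\az(f)\le c\,\sa(f)$ immediately yields $\|g_\az(f)\|_\lpb\le c\,\|\sa(f)\|_\lpb$ without even passing through Lemmas~\ref{lem3.1} and~\ref{lem3.2}, though your route via the upper-type estimate is equally valid.
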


Similarly, there exists a corollary similar to Corollary \ref{cor-g2} of Theorem
\ref{t3.1v}, the details being omitted.

\begin{theorem}\label{t3.2}
Let $\az\in(0,1]$, $\Phi$ be a Young function which is of upper type $p_1$ and lower type $p_0$
with $1<p_0\le p_1<\fz$, $w\in A_{p_0}(\rn)$ and
$\phi$ be nonincreasing.
Assume that there exists a positive constant $\wz C$ such that,
for all $0<r\le s<\fz$,
$$\int_r^{\fz}\frac{\phi(t)}{t}\,dt\le \wz C\phi(r)\ \mbox{ and }\
\phi(r)r\le \wz C \phi(s)s.$$
If $\lz>\min\{\max\{3,\,p_1\},3+2\az/n\} $,
then there exists a positive constant $C$ such that, for all $f\in\wmor$,
$$\|\ga(f)\|_{\wmor}\le C\|f\|_{\wmor}.$$
\end{theorem}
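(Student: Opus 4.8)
The plan is to transplant the proof of Theorem~\ref{t2.3} into the weighted Orlicz-Morrey framework, with Theorem~\ref{t3.1} and Lemmas~\ref{lem3.1}--\ref{lem3.3} playing the roles of their Musielak-Orlicz-Morrey counterparts; since $\Phi$ is a Young function, $\|\cdot\|_{\wmor}$ is a genuine norm (Remark~\ref{rem-wmor}(i)), so no Aoki-Rolewicz device is required. Fix a ball $B:=B(x_0,r_B)$ and write $f=f_1+f_2$ with $f_1:=f\chi_{2B}$ and $f_2:=f\chi_{(2B)^\com}$; by sublinearity $\ga(f)(x)\le\ga(f_1)(x)+\ga(f_2)(x)$ on $B$, so it suffices to bound $\|\ga(f_1)\|_\lpb$ and $\|\ga(f_2)\|_\lpb$ by $\|f\|_{\wmor}$ and then take the supremum over $B$.

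For the local piece I would apply Proposition~\ref{pro-vz} to the growth function $\vz(x,t):=w(x)\Phi(t)$ as in \eqref{phiw}. This is admissible: $w\in A_{p_0}(\rn)$ forces $w\Phi\in\aa_{p_0}(\rn)$, the lower/upper types of $\Phi$ pass to uniform lower/upper types of $w\Phi$, and the hypothesis $\lz>\min\{\max\{3,p_1\},3+2\az/n\}$ implies in particular $\lz>\min\{\max\{2,p_1\},3+2\az/n\}$, which is exactly the requirement of Proposition~\ref{pro-vz}. Hence $\int_\rn\Phi(\ga(f_1)(x))w(x)\,dx\ls\int_\rn\Phi(|f_1(x)|)w(x)\,dx$, and repeating verbatim the treatment of $\sa(f_1)$ in the proof of Theorem~\ref{t3.1} (via Lemma~\ref{lem3.2}, the monotonicity of $\phi$ together with the doubling of $w$, and Lemma~\ref{lem3.1}(i)) yields $\|\ga(f_1)\|_\lpb\ls\|f\|_{\wmor}$.

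For the global piece I would start from the pointwise domination $\ga(f_2)(x)\ls\sa(f_2)(x)+\sum_{j=1}^{\fz}2^{-j\lz n/2}\saj(f_2)(x)$, that is, \eqref{eq-glz}--\eqref{eq2*0} with $f$ replaced by $f_2$, and control each summand pointwise on $B$. The term $\sa(f_2)(x)$ is handled by the chain \eqref{eq3.9}--\eqref{Phi-3} from the proof of Theorem~\ref{t3.1}, giving $\sa(f_2)(x)\ls\Phi^{-1}(\phi(w(B)))\|f\|_{\wmor}$. For $\saj(f_2)(x)$ I would use the annular estimate \eqref{eq2*3}, a purely geometric pointwise bound, which after one application of Lemma~\ref{lem3.3} produces exactly the right-hand side of \eqref{eq3.1} multiplied by $2^{3jn/2}$; inserting the complementary-function bound \eqref{eq3.c}, the inequality \eqref{Phi-2}, and the geometric sum \eqref{Phi-3} (all uniform in $j$) then gives $\saj(f_2)(x)\ls 2^{3jn/2}\Phi^{-1}(\phi(w(B)))\|f\|_{\wmor}$. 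Since the hypothesis forces $\lz>3$ in every case (indeed $\min\{\max\{3,p_1\},3+2\az/n\}\ge3$), the series $\sum_{j\ge1}2^{-j\lz n/2}2^{3jn/2}=\sum_{j\ge1}2^{-j(\lz-3)n/2}$ converges, so $\ga(f_2)(x)\ls\Phi^{-1}(\phi(w(B)))\|f\|_{\wmor}$ for all $x\in B$. Because $\Phi$ is of upper type $p_1$, this yields $\Phi(\ga(f_2)(x)/\|f\|_{\wmor})\ls\phi(w(B))$, hence $\frac{1}{w(B)\phi(w(B))}\int_B\Phi(\ga(f_2)(x)/\|f\|_{\wmor})w(x)\,dx\ls1$ and, by Lemma~\ref{lem3.1}(i), $\|\ga(f_2)\|_\lpb\ls\|f\|_{\wmor}$. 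Adding the two estimates (triangle inequality for $\|\cdot\|_\lpb$) and taking the supremum over $B$ completes the proof.

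The main difficulty is bookkeeping rather than a new idea: one must verify that the constants coming out of \eqref{eq3.c} and the $k$-sum \eqref{Phi-3} are uniform in the annular index and in $j$, so that the only surviving $j$-dependence is the geometric factor $2^{3jn/2}$ from \eqref{eq2*3}, and that the resulting series closes precisely under $\lz>3$. A secondary point worth flagging is that the case split $\max\{3,p_1\}$ versus $3+2\az/n$ in the stated range of $\lz$ enters only through the $\ga(f_1)$ estimate via Proposition~\ref{pro-vz}; the $\ga(f_2)$ estimate never needs more than $\lz>3$, so Wilson's sharper pointwise bound $\saj(g)(x)\ls 2^{j(3n/2+\az)}\sa(g)(x)$ (used inside Proposition~\ref{pro-vz}) does not have to be reinvoked here.
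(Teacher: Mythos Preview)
Your proposal is correct and follows essentially the same approach as the paper: decompose $f=f_1+f_2$, handle $f_1$ via Proposition~\ref{pro-vz} applied to $\vz(x,t)=w(x)\Phi(t)$, and control $\ga(f_2)$ through the annular bound \eqref{eq2*3} combined with Lemma~\ref{lem3.3}, \eqref{eq3.c}, \eqref{Phi-2} and \eqref{Phi-3}, closing the $j$-sum under $\lz>3$. The only cosmetic difference is that the paper first passes each $\saj(f_2)$ to a norm estimate $\|\saj(f_2)\|_\lpb\ls 2^{3jn/2}\|f\|_\wmor$ and then sums these norms (using that $\|\cdot\|_\lpb$ is a genuine norm), whereas you sum the pointwise bounds first and take the norm once; the underlying estimates are identical.
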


\begin{proof}
For any ball $B:=B(x_0,r_B)\st \rn$ with $x_0\in\rn$ and $r_B\in(0,\fz)$, let 
$$f=f{\chi_{2B}} + f{\chi_{(2B)^\com}}=:f_1+f_2.$$
Since, for any $\az\in(0,1]$, $\ga$ 
is sublinear, we know that, for all $x\in B$,
$$\ga(f)(x)\le \ga(f_1)(x)+\ga(f_2)(x).$$

Similar to the estimate for $f_1$ in the proof of Theorem \ref{t3.1},
by Proposition \ref{pro-vz} with $\vz$ as in \eqref{phiw},
Lemma \ref{lem3.2}, the fact that $\phi$ is nonincreasing and Lemma \ref{lem3.1}(i),
if $\lz>\min\{\max\{\{2,p_1\},3+2\az/n\}$, we conclude that
\begin{eqnarray}\label{eq3.2}
\|\ga(f_1)\|_\lpb \ls \|f\|_\wmor.
\end{eqnarray}

To estimate $f_2$, from \eqref{eq2*3}, Lemma \ref{lem3.3}, \eqref{Phi-2} and \eqref{Phi-3},
we deduce that, for all $j\in\zz_+$ and $x\in B$,
\begin{eqnarray}\label{eq3.4}
\saj(f_2)(x)
&&\ls 2^{3jn/2}\sum_{k=1}^{\fz}\|f\|_\wmor\frac{\phi(w(2^{k+1}B))}{\wz{\Phi}^{-1}(\phi(w(2^{k+1}B)))}\noz\\
&&\ls 2^{3jn/2}\sum_{k=1}^{\fz}\|f\|_\wmor\Phi^{-1}(\phi(w(2^{k+1}B)))\noz\\
&&\ls 2^{3jn/2}\|f\|_\wmor\Phi^{-1}(\phi(w(B))).
\end{eqnarray}
By this, we further see that
$$\Phi\lf(\frac{\saj(f_2)(x)}{2^{3jn/2}\|f\|_\wmor}\r)\ls\phi(w(B)),$$
which further implies that
$$\frac{1}{w(B)\phi(w(B))}
\int_B\Phi\lf(\frac{\saj(f_2)(x)}{2^{3jn/2}\|f\|_\wmor}\r)w(x)\,dx \ls 1.$$
From this and Lemma \ref{lem3.1}(i), we deduce that
$$\|\saj(f_2)\|_\lpb\ls  2^{3jn/2}\|f\|_\wmor.$$
By this and \eqref{eq3.3}, we know that, if $\lz>3$,
$$\|\ga(f_2)\|_\lpb
 \ls \lf[1+\sum_{j=1}^{\fz}2^{-j(\lz-3)n/2}\r]\|f\|_\wmor
\ls \|f\|_\wmor,$$
which, combined with \eqref{eq3.2}, completes the proof of Theorem \ref{t3.2}.
\end{proof}

\begin{theorem}\label{t3.3}
Let $\az\in(0,1]$, $\Phi$ be a Young function which is of upper type $p_1$ and lower type $p_0$,
$1<p_0\le p_1<\fz$ with $w\in A_{p_0}(\rn)$ and
$\phi$ be nonincreasing.
Assume that there exists a positive constant $\wz C$ such that,
for all $0<r\le s<\fz$,
$$\int_r^{\fz}\frac{\phi(t)}{t}\,dt\le \wz C\phi(r)\ \mbox{ and }\
\phi(r)r\le \wz C \phi(s)s.$$
If $\lz>\min\{\max\{3,\,p_1\},3+2\az/n\}$,
then there exists a positive constant $C$ such that, for all $f\in\wmor$,
$$\|[b,\ga](f)\|_\wmor\le C \|f\|_\wmor.$$
\end{theorem}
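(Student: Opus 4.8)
The plan is to combine the scheme of the proof of Theorem~\ref{t3.2} with the commutator decomposition used in the proof of Theorem~\ref{t2.2}. Let $b\in\bmo$ and, without loss of generality, $\|b\|_\bmo=1$. Fix a ball $B:=B(x_0,r_B)$ with $x_0\in\rn$ and $r_B\in(0,\fz)$, and write $f=f\chi_{2B}+f\chi_{(2B)^\com}=:f_1+f_2$. Since $[b,\ga]$ is sublinear and $\|\cdot\|_\wmor$ is a genuine norm (Remark~\ref{rem-wmor}{\rm(i)}), it suffices to prove $\|[b,\ga](f_1)\|_\lpb\ls\|f\|_\wmor$ and $\|[b,\ga](f_2)\|_\lpb\ls\|f\|_\wmor$ with implicit constants independent of $B$. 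For $f_1$, I would invoke Proposition~\ref{p-vz} with $\vz$ as in \eqref{phiw} (legitimate, since $\lz>\min\{\max\{3,p_1\},3+2\az/n\}$ forces $\lz>\min\{\max\{2,p_1\},3+2\az/n\}$), obtaining $\int_\rn\Phi([b,\ga](f_1)(x))w(x)\,dx\ls\int_{2B}\Phi(|f(x)|)w(x)\,dx$; then Lemma~\ref{lem3.2}, the monotonicity of $\phi$, and Lemma~\ref{lem3.1}{\rm(i)} give $\|[b,\ga](f_1)\|_\lpb\ls\|f\|_{\Phi,\phi,2B}\ls\|f\|_\wmor$, exactly as the $f_1$-estimate in the proof of Theorem~\ref{t3.1}.

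For $f_2$, I would split as in \eqref{(4)}: for all $x\in B$,
$$[b,\ga](f_2)(x)\le|b(x)-b_B|\,\ga(f_2)(x)+{\rm I}_2(x),$$
where ${\rm I}_2(x)$ is the $\ga$-type square function of $f_2$ with $b(z)-b_B$ in place of $b(x)-b(z)$. Combining \eqref{eq3.9}, \eqref{Phi-3} and \eqref{eq3.4} via \eqref{eq-glz} exactly as in the proof of Theorem~\ref{t3.2} gives the pointwise bound $\ga(f_2)(x)\ls\Phi^{-1}(\phi(w(B)))\|f\|_\wmor$ for $x\in B$ (the hypothesis $\lz>3$ being used to sum the $2^{-j\lz n/2}$-series against the $2^{3jn/2}$ factors). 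Since $\Phi$ is of lower type $p_0$ and upper type $p_1$, it follows that
$$\frac{1}{w(B)\phi(w(B))}\int_B\Phi\lf(\frac{|b(x)-b_B|\,\ga(f_2)(x)}{\|f\|_\wmor}\r)w(x)\,dx\ls\frac{1}{w(B)}\int_B\lf[|b(x)-b_B|^{p_1}+|b(x)-b_B|^{p_0}\r]w(x)\,dx,$$
and, using $w\in A_{p_0}(\rn)\st A_{p_1}(\rn)\st{\rm RH}_r(\rn)$ for some $r>1$, H\"older's inequality and Proposition~\ref{J-N}, the right-hand side is $\ls1$; Lemma~\ref{lem3.1}{\rm(i)} then shows that the first summand contributes $\ls\|f\|_\wmor$ to $\|\cdot\|_\lpb$, just as the ${\rm I}_1$-estimate in the proof of Theorem~\ref{t2.2}.

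To treat ${\rm I}_2$, I would apply \eqref{eq-glz} to the twisted expression, bounding ${\rm I}_2(x)$ by a twisted $\sa$-piece of $f_2$ plus $\sum_{j\ge1}2^{-j\lz n/2}$ times a twisted $\saj$-piece of $f_2$. As in the derivation of \eqref{eq2*3} (cf. the proof of Theorem~\ref{t2.3}), over each annulus $2^{k+1}B\bh 2^kB$ the geometric constraints, the Minkowski inequality and the uniform boundedness of $\tz\in\ca$ reduce the $j$-th twisted $\saj$-piece of $f_2$ to $\ls2^{3jn/2}\sum_{k\ge1}|2^{k+1}B|^{-1}\int_{2^{k+1}B}|b(z)-b_B|\,|f(z)|\,dz$; writing $b(z)-b_B=(b(z)-b_{2^{k+1}B})+(b_{2^{k+1}B}-b_B)$ splits this into summands ${\rm J}_1$ and ${\rm J}_2$, exactly as in the proof of Theorem~\ref{t2.2}. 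For ${\rm J}_1$, Lemma~\ref{lem3.3}, the $\wmor$-analogue of \eqref{eq2-8},
$$\frac{w(2^{k+1}B)\,\wz{\Phi}^{-1}(\phi(w(2^{k+1}B)))}{|2^{k+1}B|}\lf\||b(\cdot)-b_{2^{k+1}B}|\,w^{-1}\r\|_{\wz{\Phi},\phi,2^{k+1}B}\ls1,$$
together with \eqref{Phi-2} and \eqref{Phi-3}, give ${\rm J}_1\ls2^{3jn/2}\Phi^{-1}(\phi(w(B)))\|f\|_\wmor$; the displayed estimate is established using that $\wz{\Phi}$ is of lower type $p'_1$ and upper type $p'_0$, that $w\in A_{p_0}(\rn)\st A_{p_1}(\rn)$, that $w_i:=w^{-p'_i/p_i}\in A_{p'_i}(\rn)$ for $i\in\{0,1\}$, H\"older's inequality, and \eqref{eq2-2} with $\vz(\cdot,1)$ read as $w$ and $p_i$ replaced by $p'_i$. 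For ${\rm J}_2$, use $|b_{2^{k+1}B}-b_B|\ls k+1$, Lemma~\ref{lem3.3}, \eqref{eq3.c} and \eqref{Phi-2} to reduce matters to the weighted series estimate $\sum_{k\ge1}(k+1)\Phi^{-1}(\phi(w(2^{k+1}B)))\ls\Phi^{-1}(\phi(w(B)))$. Assembling these and summing over $j$ against $2^{-j\lz n/2}$ (with $\lz>3$) gives ${\rm I}_2(x)\ls\Phi^{-1}(\phi(w(B)))\|f\|_\wmor$, hence $\Phi({\rm I}_2(x)/\|f\|_\wmor)\ls\phi(w(B))$ and, by Lemma~\ref{lem3.1}{\rm(i)}, $\|{\rm I}_2\|_\lpb\ls\|f\|_\wmor$; combining this with the bound on $|b(\cdot)-b_B|\,\ga(f_2)$ and with the $f_1$-estimate finishes the proof.

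I expect the main obstacle to be the weighted series estimate $\sum_{k\ge1}(k+1)\Phi^{-1}(\phi(w(2^{k+1}B)))\ls\Phi^{-1}(\phi(w(B)))$, which balances the logarithmic BMO growth $k+1$ against the only mildly decreasing $\Phi^{-1}(\phi(w(2^{k+1}B)))$; I would prove it by imitating the long computation near the end of the proof of Theorem~\ref{t2.2}, with $1/\phi(\cdot)$ replaced by $\Phi^{-1}(\phi(\cdot))$ and $\vz(B,1)$ by $w(B)$ — choosing $j_0\in\nn$ from \eqref{j0} with $\vz$ as in \eqref{phiw}, using $k\ls\int_{w(B)}^{w(2^{kj_0}B)}s^{-1}\,ds$, interchanging the sums and integrals by Fubini, and closing with \eqref{Phi-4}. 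A secondary technical point is the $\wmor$-version of \eqref{eq2-8} displayed above, which rests on $w^{-p'_i/p_i}\in A_{p'_i}(\rn)$ together with the reverse-H\"older inequality and Proposition~\ref{J-N}, exactly as in the $\mor$-setting.
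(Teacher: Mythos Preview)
Your proposal is correct and follows essentially the same approach as the paper's own proof: the same $f_1/f_2$ split, the same use of Proposition~\ref{p-vz} for $f_1$, the same commutator decomposition $|b(x)-b_B|\,\ga(f_2)(x)+{\rm II}(x)$ for $f_2$, the same cone-aperture expansion of ${\rm II}$ into $\sum_{j\ge0}2^{-j\lz n/2}$ times twisted $\saj$-pieces with the subsequent ${\rm H}_j/{\rm G}_j$ split, and the same two technical ingredients you flag --- the $\wmor$-analogue of \eqref{eq2-8} and the weighted series bound $\sum_{k\ge1}(k+1)\Phi^{-1}(\phi(w(2^{k+1}B)))\ls\Phi^{-1}(\phi(w(B)))$ via the $j_0$-computation and \eqref{Phi-4}. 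The paper carries these out exactly as you outline.
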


\begin{proof}
Without loss of generality, we may assume that
$\|b\|_\bmo=1$; otherwise, we replace $b$ by $b/\|b\|_\bmo$.
Fix any ball $B:=B(x_0, r_B)\st\rn$ with $x_0\in\rn$ and $r_B\in(0,\fz)$.
Let 
$$f=f{\chi_{2B}}+f{\chi_{(2B)^\com}}=:f_1+f_2.$$
Since, for any $\az\in (0,1]$, $[b,\sa]$ is sublinear, we know that, for all $x\in B$,
$$[b,\ga](f)(x)\le [b,\ga](f_1)(x)+[b,\ga](f_2)(x).$$

Let $\mu:=\|f\|_{\Phi,\phi,2B}$.
From Proposition \ref{p-vz} with $\vz$ as in \eqref{phiw},
it follows that
$$\int_\rn \Phi([b,\ga](f)(x))w(x)\,dx\ls \int_\rn \Phi(|f(x)|)w(x)\,dx,$$
which, combined with Lemma \ref{lem3.2},
further implies that
\begin{eqnarray*}
&&\frac{1}{w(B))\phi(w(B))}\int_{B}\Phi\lf(\frac{[b,\ga](f_1)(x)}{\mu}\r)w(x)\,dx\\
&&\hs\ls \frac{1}{w(B))\phi(w(B))}\int_{\rn}\Phi\lf(\frac{|f_1(x)|}{\mu}\r)w(x)\,dx\\
&&\hs\sim\frac{1}{w(B))\phi(w(B))}\int_{2B}\Phi\lf(\frac{|f(x)|}{\mu}\r)w(x)\,dx\sim 1.
\end{eqnarray*}
From this and Lemma \ref{lem3.1}(i), we further deduce that
\begin{eqnarray}\label{eq3.8}
\|[b,\ga](f_1)\|_{\Phi,\phi,B}\ls\|f\|_{\Phi,\phi,2B}\ls \|f\|_\wmor.
\end{eqnarray}

Next, we turn to estimate $[b,\ga](f_2)$.
By \eqref{(4)}, we know that, for all $x\in B$,
\begin{eqnarray*}
[b,\ga](f_2)(x)
&&\le |b(x)-b_B| \ga(f_2)(x)
+\lf\{\int_0^\fz\int_\rn\lf(\frac{t}{t+|x-y|}\r)^{\lz n}\r.\\
&&\hs\times\lf.\sup_{\tz\in\ca}
\lf|\int_{\rn}[b(z)-b_B]\tz_t(y-z)f_2(z)\,dz\r|^2\dytn\r\}^{1/2}\\
&&=:{\rm I}(x)+{\rm II}(x).
\end{eqnarray*}
For any $x\in B$, by \eqref{eq2*0}, \eqref{eq3.4}
and $\lz>3$, we conclude that
\begin{eqnarray*}
\ga(f_2)(x)\ls \|f\|_\wmor\Phi^{-1}(\phi(w(B))),
\end{eqnarray*}
which further implies that, for all $x\in B$,
$${\rm I}(x)\ls |b(x)-b_B|\|f\|_\wmor\Phi^{-1}(\phi(w(B))).$$
From this, the fact that $\Phi$ is lower type $p_0$ and upper type $p_1$
and \eqref{eq2-2} with $\vz(x,1)$ replaced by $w(x)$,
it follows that
\begin{eqnarray*}
&& \frac{1}{\phi(w(B))w(B)}\int_B\Phi\lf(\frac{{\rm I}(x)}{\|f\|_\wmor}\r)w(x)\,dx\\
&& \hs\ls \frac{1}{\phi(w(B))w(B)}\int_B\Phi\lf(|b(x)-b_B|\Phi^{-1}(\phi(w(B)))\r)w(x)\,dx\\
&& \hs\ls \frac{1}{w(B)}\int_B[|b(x)-b_B|^{p_0}+|b(x)-b_B|^{p_1}]w(x)\,dx
   \ls 1.
\end{eqnarray*}
By this and Lemma \ref{lem3.1}(i), we know that
\begin{eqnarray}\label{eq3.5}
\|{\rm I}\|_\lpb\ls \|f\|_\wmor.
\end{eqnarray}

For ${\rm II}(x)$, we find that, for all $x\in B$,
\begin{eqnarray}\label{eq3.7}
{\rm II}(x)
&&\le\lf\{\int_0^\fz\int_{|x-y|<t}\lf(\frac{t}{t+|x-y|}\r)^{\lz n}\r.\noz\\
&&\hs\times\lf.\lf[\sup_{\tz\in\ca}
 \lf|\int_{\rn}[b(z)-b_B]\tz_t(y-z)f_2(z)\,dz\r|\r]^2\dytn\r\}^{1/2}\noz\\
&&\hs +\sum_{j=1}^\fz
\lf\{\int_0^\fz\int_{2^{j-1}t\le|x-y|<2^jt}\lf(\frac{t}{t+|x-y|}\r)^{\lz n}\r.\noz\\
&&\hs\times\lf.\lf[\sup_{\tz\in\ca}
\lf|\int_{\rn}[b(z)-b_B]\tz_t(y-z)f_2(z)\,dz\r|\r]^2\dytn\r\}^{1/2}\noz\\
&&\le \sum_{j=0}^\fz 2^{-j\lz n/2}
\lf\{\int_0^\fz\int_{|x-y|<2^jt}\r.\noz\\
&&\hs\times\lf.\lf[\sup_{\tz\in\ca}
\lf|\int_{\rn}[b(z)-b_B]\tz_t(y-z)f_2(z)\,dz\r|\r]^2\dytn\r\}^{1/2}\noz\\
&&=:\sum_{j=0}^\fz 2^{-j\lz n/2}{\rm I}_j(x).
\end{eqnarray}
For $j\in\zz_+$, by the fact that $\tz\in\ca$ is uniformly bounded,
we know that, for all $x\in B$,
\begin{eqnarray*}
{\rm I}_j(x)
&&\ls \sum_{k=1}^\fz\lf[\int_{2^{k+1}B\bh 2^kB}|b(z)-b_B||f(z)|\,dz\r]
\lf\{\int_{2^{k-j-2}r}^\fz\int_{|x-y|<2^jt}\,\frac{dy\,dt}{t^{3n+1}}\r\}^{1/2}\\
&&\ls2^{3jn/2}\sum_{k=1}^{\fz}\frac{1}{|2^{k+1}B|}\int_{2^{k+1}B}|b(z)-b_{2^{k+1}B}||f(z)|\,dz\\
&&\hs + 2^{3jn/2}\sum_{k=1}^{\fz}\frac{1}{|2^{k+1}B|}|b_{2^{k+1}B}-b_B|\int_{2^{k+1}B}|f(z)|\,dz\\
&& =:{\rm H}_{j}(x)+{\rm G}_{j}(x).
\end{eqnarray*}
For ${\rm H}_{j}(x)$, by Lemma \ref{lem3.3}, we know that
\begin{eqnarray*}
{\rm H}_{j}(x)
&&\ls2^{3jn/2}\sum_{k=1}^{\fz}\frac{w(2^{k+1}B)\phi(w(2^{k+1}B))}{|2^{k+1}B|}\\
&&\hs\times\lf\||b(\cdot)-b_{2^{k+1}B}|\frac{1}{w(\cdot)}\r\|_{\wz{\Phi},\phi,2^{k+1}B}
\|f\|_{\Phi,\phi,2^{k+1}B}.
\end{eqnarray*}
By an argument similar to that used in the estimate for \eqref{eq2-8}, we have
$$\frac{w(2^{k+1}B)\phi(w(2^{k+1}B))}{|2^{k+1}B|}
\lf\||b(\cdot)-b_{2^{k+1}B}|\frac{1}{w(\cdot)}\r\|_{\wz{\Phi},\phi,2^{k+1}B}
\ls \frac{\phi(w(2^{k+1}B))}{\wz{\Phi}^{-1}(\phi(w(2^{k+1}B)))}.$$
From this, \eqref{Phi-2} and \eqref{Phi-3}, it follows that, for all $x\in B$,
\begin{eqnarray}\label{eq3.10}
{\rm H}_{j}(x)
&&\ls 2^{3jn/2}\|f\|_\wmor\sum_{k=1}^{\fz}\Phi^{-1}(\phi(w(2^{k+1}B)))\noz\\
&&\ls 2^{3jn/2}\|f\|_\wmor\Phi^{-1}(\phi(w(B))).
\end{eqnarray}

For ${\rm G}_{j}(x)$, by the fact that 
$$|b_{2^{k+1}B}-b_B|\ls (k+1)\|b\|_{\bmo},$$
Lemma \ref{lem3.3} and \eqref{eq3.c}, we conclude that, for all $x\in B$,
\begin{eqnarray*}
{\rm G}_{j}(x)
&&\le 2^{3jn/2}\sum_{k=1}^\fz(k+1)\frac{1}{|2^{k+1}B|}\int_{2^{k+1}B\bh 2^kB}|f(z)|\,dz\\
&&\ls 2^{3jn/2}\|f\|_\wmor\sum_{k=1}^\fz(k+1)\Phi^{-1}(\phi(w(2^{k+1}B))).
\end{eqnarray*}
By \eqref{j0}, we know that there exists some $j_0\in\nn$ such that, for all $k\in\nn$,
$$1\ls \log\lf(\frac{w(2^{(k+1)j_0}B)}{w(2^{kj_0}B)}\r).$$
From this, \eqref{Phi-4} and the fact that $\Phi^{-1}(\phi(\cdot))$ is decreasing, it follows that
\begin{eqnarray*}
&& \hspace{-0.25cm}\sum_{k=1}^\fz (k+1)\Phi^{-1}(\phi(w(2^{k+1}B)))\\
&& =\sum_{k=1}^{2j_0-1}(k+1)\Phi^{-1}(\phi(w(2^{k+1}B)))
+\sum_{k=1}^\fz\sum_{i=(k+1)j_0}^{(k+2)j_0-1}(i+1)\Phi^{-1}(\phi(w(2^{i+1}B)))\\
&& \le \sum_{k=1}^{2j_0-1}(k+1)\Phi^{-1}(\phi(w(2^{k+1}B)))
    +2j_0^2\sum_{k=1}^\fz (k+1)\Phi^{-1}(\phi(w(2^{(k+1)j_0}B)))\noz\\
&& \ls \Phi^{-1}(\phi(w(B)))+ \sum_{k=1}^\fz (k+1)\Phi^{-1}(\phi(w(2^{(k+1)j_0}B)))\noz\\
&& \ls \Phi^{-1}(\phi(w(B)))+ \sum_{k=1}^\fz (k+1)\Phi^{-1}(\phi(w(2^{(k+1)j_0}B)))
     \int_{w(2^{kj_0}B)}^{w(2^{(k+1)j_0}B)}\dt\noz\\
&& \ls \Phi^{-1}(\phi(w(B)))+ \sum_{k=1}^\fz
   (k+1)\int_{w(2^{kj_0}B)}^{w(2^{(k+1)j_0}B)}\frac{\Phi^{-1}(\phi(t))}{t}\,dt\noz\\
&& \ls \Phi^{-1}(\phi(w(B)))+ \sum_{k=1}^\fz \int_{w(2^{kj_0}B)}^{w(2^{(k+1)j_0}B)}
   \frac{\Phi^{-1}(\phi(t))}{t}\,dt\int_{w(B)}^{w(2^{kj_0}B)}\frac{1}{s}\,ds\noz\\
&& \ls \Phi^{-1}(\phi(w(B)))+ \sum_{k=1}^\fz \int_{w(2^{kj_0}B)}^{w(2^{(k+1)j_0}B)}
   \frac{\Phi^{-1}(\phi(t))}{t}\int_{w(B)}^t\frac{1}{s}\,ds\,dt\noz\\
&& \sim \Phi^{-1}(\phi(w(B)))+
\int_{w(B)}^\fz\frac{1}{s}\int_s^\fz\frac{\Phi^{-1}(\phi(t))}{t}\,dt\\
&&   \ls \Phi^{-1}(\phi(w(B))) + \int_{w(B)}^\fz\frac{\Phi^{-1}(\phi(s))}{s}\,ds
\ls \Phi^{-1}(\phi(w(B))).\noz
\end{eqnarray*}
Thus, we find that, for all $x\in B$,
$${\rm G}_{j}(x)\ls 2^{3jn/2}\|f\|_\wmor\Phi^{-1}(\phi(w(B))).$$
By this, \eqref{eq3.10} and \eqref{eq3.7}, together with $\lz>3$,
we see that, for all $x\in B$,
\begin{eqnarray*}
{\rm II}(x)\ls \lf[1+\sum_{j=1}^\fz2^{-j(\lz-3)n/2}\r]\|f\|_\wmor\Phi^{-1}(\phi(w(B))),
\end{eqnarray*}
which, combined with Lemma \ref{lem3.1}(i), implies that
$$\|{\rm II}\|_\lpb\ls \|f\|_\wmor.$$
From this and \eqref{eq3.5}, we deduce that
$$\|[b,\ga](f_2)\|_\lpb\ls \|f\|_\wmor,$$ 
which, combined with \eqref{eq3.8},
completes the proof of Theorem \ref{t3.3}.
\end{proof}

By using an argument similar to that used in the proof of Theorem \ref{t3.3},
we can prove $[b,\sa]$ and $[b,g_\az]$
are bounded, respectively, on $\wmor$ as follows,
the details being omitted.

\begin{proposition}\label{t3.4}
Let $\az\in(0,1]$, $\Phi$ be a Young function which is of upper type $p_1$ and lower type $p_0$,
$1<p_0\le p_1<\fz$, $w\in A_{p_0}(\rn)$ and
$\phi$ be nonincreasing.
Assume that there exists a positive constant $\wz C$ such that,
for all $0<r\le s<\fz$,
$$\int_r^{\fz}\frac{\phi(t)}{t}\,dt\le \wz C\phi(r)\ \mbox{ and }\
\phi(r)r\le \wz C \phi(s)s.$$
Then there exists a positive constant $C$ such that, for all $f\in\wmor$,
$$\|[b,\sa](f)\|_\wmor\le C \|f\|_\wmor$$
and
$$\|[b,g_\az](f)\|_\wmor\le C \|f\|_\wmor.$$
\end{proposition}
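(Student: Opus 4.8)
The plan is to follow the proof of Theorem~\ref{t3.3}, simply dropping the cone-aperture sum $\sum_{j\ge1}2^{-j\lz n/2}\saj(f_2)$ that appears there for $\ga$, since $\sa$ involves only the single cone $\bgz(x)$; structurally this is the proof of Theorem~\ref{t2.2}, with the weighted Orlicz--Morrey estimates of Section~\ref{s3} in place of the Musielak--Orlicz--Morrey ones of Section~\ref{s2}. The case of $[b,g_\az]$ is then entirely analogous (alternatively, it follows from the pointwise comparability of $[b,g_\az](f)$ and $[b,\sa](f)$, inherited from that of $g_\az(f)$ and $\sa(f)$, since $[b,\sa](f)(x)=\sa(b(x)f-bf)(x)$ and $[b,g_\az](f)(x)=g_\az(b(x)f-bf)(x)$). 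So assume $\|b\|_\bmo=1$, fix a ball $B:=B(x_0,r_B)\st\rn$, write $f=f\chi_{2B}+f\chi_{(2B)^\com}=:f_1+f_2$, and use sublinearity to get $[b,\sa](f)(x)\le[b,\sa](f_1)(x)+[b,\sa](f_2)(x)$ for $x\in B$. For the local part, apply Proposition~\ref{p-vz} with $\vz$ as in \eqref{phiw}, then Lemma~\ref{lem3.2}, the monotonicity of $\phi$ and Lemma~\ref{lem3.1}(i), exactly as for $\sa(f_1)$ in the proof of Theorem~\ref{t3.1}, to obtain $\|[b,\sa](f_1)\|_\lpb\ls\|f\|_{\Phi,\phi,2B}\ls\|f\|_\wmor$.

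For the far part, I would use the pointwise splitting \eqref{(4)} to write, for $x\in B$,
$$[b,\sa](f_2)(x)\le|b(x)-b_B|\,\sa(f_2)(x)+\lf\{\iint_{\bgz(x)}\sup_{\tz\in\ca}\lf|\int_{\rn}[b(z)-b_B]\tz_t(y-z)f_2(z)\,dz\r|^2\dytn\r\}^{1/2}=:{\rm I}(x)+{\rm II}(x).$$
For ${\rm I}(x)$, I would use the bound $\sa(f_2)(x)\ls\|f\|_\wmor\,\Phi^{-1}(\phi(w(B)))$ already established in the proof of Theorem~\ref{t3.1}, so that ${\rm I}(x)\ls|b(x)-b_B|\,\|f\|_\wmor\,\Phi^{-1}(\phi(w(B)))$; then the upper and lower type of $\Phi$, the John--Nirenberg estimate \eqref{eq2-2} with $\vz(\cdot,1)$ replaced by $w$, and Lemma~\ref{lem3.1}(i) give $\|{\rm I}\|_\lpb\ls\|f\|_\wmor$, exactly as the term ${\rm I}$ is handled in the proof of Theorem~\ref{t3.3}.

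For ${\rm II}(x)$, following the annular decomposition used for ${\rm I}_j$ with $j=0$ in the proof of Theorem~\ref{t3.3} (equivalently, for ${\rm I}_2$ in the proof of Theorem~\ref{t2.2}), I would bound, for $x\in B$,
$${\rm II}(x)\ls\sum_{k=1}^\fz\frac{1}{|2^{k+1}B|}\int_{2^{k+1}B}|b(z)-b_{2^{k+1}B}|\,|f(z)|\,dz+\sum_{k=1}^\fz\frac{|b_{2^{k+1}B}-b_B|}{|2^{k+1}B|}\int_{2^{k+1}B}|f(z)|\,dz=:{\rm J}_1(x)+{\rm J}_2(x).$$
For ${\rm J}_1$, Lemma~\ref{lem3.3}, the weighted analogue of \eqref{eq2-8} (with $\vz(\cdot,1)$ replaced by $w$, proved as in Theorem~\ref{t3.3}), \eqref{Phi-2} and \eqref{Phi-3} yield $\sum_{k\ge1}\Phi^{-1}(\phi(w(2^{k+1}B)))\ls\Phi^{-1}(\phi(w(B)))$, hence ${\rm J}_1(x)\ls\|f\|_\wmor\,\Phi^{-1}(\phi(w(B)))$. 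For ${\rm J}_2$, the inequality $|b_{2^{k+1}B}-b_B|\ls k+1$, Lemma~\ref{lem3.3} and \eqref{eq3.c} reduce matters to estimating $\sum_{k\ge1}(k+1)\Phi^{-1}(\phi(w(2^{k+1}B)))$, which the nested-integral computation of Theorem~\ref{t3.3} --- chaining \eqref{j0} (so that $k\ls\int_{w(B)}^{w(2^{kj_0}B)}\frac{ds}{s}$), the two growth hypotheses on $\phi$, and Nakai's inequality \eqref{Phi-4} --- controls by $\Phi^{-1}(\phi(w(B)))$. Therefore ${\rm II}(x)\ls\|f\|_\wmor\,\Phi^{-1}(\phi(w(B)))$, so $\frac{1}{w(B)\phi(w(B))}\int_B\Phi({\rm II}(x)/\|f\|_\wmor)w(x)\,dx\ls1$ and Lemma~\ref{lem3.1}(i) gives $\|{\rm II}\|_\lpb\ls\|f\|_\wmor$. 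Combining the bounds for $[b,\sa](f_1)$, ${\rm I}$ and ${\rm II}$ yields $\|[b,\sa](f)\|_\wmor\ls\|f\|_\wmor$, and the statement for $[b,g_\az]$ follows.

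The step I expect to be the main obstacle is the estimate of ${\rm J}_2$: one has to absorb the logarithmically growing factors $k+1$ produced by $|b_{2^{k+1}B}-b_B|$, and this is precisely where the doubling/reverse-H\"older consequence \eqref{j0}, the hypotheses on $\phi$, and the key inequality \eqref{Phi-4} must be combined through the iterated-integral trick of Theorem~\ref{t3.3}; the remainder of the argument is a routine transcription of computations already carried out in Sections~\ref{s2} and~\ref{s3}.
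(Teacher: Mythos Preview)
Your proposal is correct and follows essentially the same approach as the paper, which simply states that Proposition~\ref{t3.4} is proved ``by using an argument similar to that used in the proof of Theorem~\ref{t3.3}'' with the details omitted. You have accurately reconstructed that argument: the local part via Proposition~\ref{p-vz} and Lemmas~\ref{lem3.1}--\ref{lem3.2}, the far part via the splitting \eqref{(4)} into ${\rm I}$ and ${\rm II}$, and the handling of ${\rm J}_1$ and ${\rm J}_2$ through Lemma~\ref{lem3.3}, \eqref{eq3.c}, \eqref{Phi-2}--\eqref{Phi-4}, and the iterated-integral computation controlling $\sum_{k\ge1}(k+1)\Phi^{-1}(\phi(w(2^{k+1}B)))$.
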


\section{Boundedness of intrinsic Littlewood-Paley functions
on Musielak-Orlicz Campanato spaces\label{s4}}

In this section, we establish the boundedness of intrinsic Littlewood-Paley functions
on the Musielak-Orlicz Campanato space which was introduced in \cite{ly13}.
We begin with recalling the notion of Musielak-Orlicz Campanato spaces.

\begin{definition}\label{d-Cam}
Let $\vz$ be a growth function satisfying $\vz\in \aa_p(\rn)$, $p\in[1,\fz)$
and $q\in [1,\fz)$. A locally integrable function $f$ on $\rn$
is said to belong to the
\emph{Musielak-Orlicz Campanato space $\cam $}, if
\begin{eqnarray*}
&&\|f\|_\cam \\
&&\hs:= \sup_{B \st \rn}\frac{1}{\kb}
\lf\{\int_{B}\lf[\frac{|f(x)-f_B|}{\vz(x,\vb)}\r]^q
\vz\lf(x,\vb\r)\,dx\r\}^{1/q}
\end{eqnarray*}
is finite, where the supremum is taken over all balls $B$ of $\rn$
and $f_B$ as in \eqref{f-b}.
\end{definition}

Motivated by \cite{hmy08}, we also introduce a subspace $\scam$ of $\cam$.

\begin{definition}\label{d-Cams}
Let $\vz$ be a growth function satisfying $\vz\in \aa_p(\rn)$, $p\in[1,\fz)$
and $q\in [1,\fz)$. A locally integrable function $f$ on $\rn$
is said to belong to $\scam $, if
\begin{eqnarray*}
&&\|f\|_{\scam} \\
&& \hs := \sup_{B \st \rn}\frac{1}{\kb}
\lf\{\int_B\lf[\frac{f(x)-\essinf_{y\in B}f(y)}{\vz(x,\vb)}\r]^q
\vz\lf(x,\vb\r)\,dx\r\}^{1/q}
\end{eqnarray*}
is finite, where the supremum is taken over all balls $B$ of $\rn$.
\end{definition}

\begin{remark}\label{r-def}
(i) Since the growth function here
is slightly different from \cite{ly13} (see Remark \ref{rem-vz}(i)),
the Musielak-Orlicz Campanato space
here is also slightly different from \cite{ly13}.

(ii) $\scam\st\cam.$

\end{remark}

Before proving the main results of this section,
we need the following technical lemma.

\begin{lemma}\label{lem4.1}
Let $\vz$ be a growth function satisfying $\vz\in \aa_p(\rn)$, $p\in[1,\fz)$
and $q\in [1,\fz)$.
Then, for any ball $B:=B(x_0,r)\st\rn$ with $x_0\in\rn$ and $r\in(0,\fz)$,
$f\in \cam$ and
$\bz\in(\max\{n(\frac{p}{p_0}-1),0\},\fz)$,
there exists a positive constant
$C$, independent of $f$ and $B$, such that
$$\frac{r^\bz|B|}{\kb}\int_\rn\frac{|f(y)-f_B|}{r^{n+\bz}+|y-x_0|^{n+\bz}}\,dy
 \le C \|f\|_\cam.$$
\end{lemma}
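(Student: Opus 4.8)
The plan is to split $\rn$ into the ball $B:=B(x_0,r)$ and its dyadic annuli, estimate the contribution of each piece separately, and reduce everything to two ingredients: a generalized H\"older bound $\int_Q|f(y)-f_Q|\,dy\lesssim\|\chi_Q\|_{\lv}\|f\|_\cam$ valid for every ball $Q$ (to be applied with $Q=2^jB$), and a comparison estimate between $\|\chi_{2^jB}\|_{\lv}$ and $\|\chi_B\|_{\lv}$.

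First I would record the two ingredients. For any ball $Q$, by the definition of the Luxembourg norm (and Remark \ref{rem-vz}(ii), so that $\vz(x,\cdot)$ may be assumed continuous and strictly increasing) one has $\int_Q\vz(x,\|\chi_Q\|_{\lv}^{-1})\,dx=1$; applying H\"older's inequality with exponents $q$ and $q'$ inside the definition of $\|f\|_\cam$ then gives
$$\int_Q|f(y)-f_Q|\,dy\le\|\chi_Q\|_{\lv}\,\|f\|_\cam.$$
For the comparison estimate, since $\chi_B\le\chi_{2^jB}$ we have $\|\chi_{2^jB}\|_{\lv}\ge\|\chi_B\|_{\lv}$, so the uniformly lower type $p_0$ of $\vz$ yields $\vz(x,\|\chi_{2^jB}\|_{\lv}^{-1})\lesssim(\|\chi_B\|_{\lv}/\|\chi_{2^jB}\|_{\lv})^{p_0}\vz(x,\|\chi_B\|_{\lv}^{-1})$; integrating over $2^jB$, using the standard consequence $\vz(2^jB,t)\lesssim 2^{jnp}\vz(B,t)$ of the fact that $\vz(\cdot,t)\in A_p(\rn)$ with $A_p$ constant independent of $t$, and the normalizations $\vz(2^jB,\|\chi_{2^jB}\|_{\lv}^{-1})=1=\vz(B,\|\chi_B\|_{\lv}^{-1})$, I obtain
$$\|\chi_{2^jB}\|_{\lv}\lesssim 2^{jnp/p_0}\,\|\chi_B\|_{\lv},\qquad j\in\nn.$$

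Next I would write $\rn=B\cup\bigcup_{j=1}^{\fz}(2^jB\setminus 2^{j-1}B)$ and split the integral accordingly. On $B$, since $r^{n+\bz}+|y-x_0|^{n+\bz}\ge r^{n+\bz}$ and $|B|\sim r^n$, the corresponding part of the left-hand side is $\lesssim\frac{1}{\kb}\int_B|f(y)-f_B|\,dy\lesssim\|f\|_\cam$. On the $j$-th annulus $|y-x_0|\sim 2^jr$, so the corresponding part is $\lesssim 2^{-j(n+\bz)}\frac{1}{\kb}\int_{2^jB}|f(y)-f_B|\,dy$. To handle the latter integral I would use $\int_{2^jB}|f-f_B|\le\int_{2^jB}|f-f_{2^jB}|+|2^jB|\,|f_{2^jB}-f_B|$, bound $\int_{2^jB}|f-f_{2^jB}|$ by the generalized H\"older estimate, and control $|f_{2^jB}-f_B|$ by the telescoping sum $\sum_{i=1}^j|f_{2^iB}-f_{2^{i-1}B}|$, each term of which is $\lesssim\frac{\|\chi_{2^iB}\|_{\lv}}{|2^iB|}\|f\|_\cam$ by the same estimate. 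Dividing by $\kb$ and inserting the comparison estimate together with $|2^jB|/|2^iB|=2^{(j-i)n}$, this leads to
$$\frac{1}{\kb}\int_{2^jB}|f(y)-f_B|\,dy\lesssim\|f\|_\cam\,2^{jn}\sum_{i=1}^j 2^{in(p/p_0-1)}.$$

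Finally I would sum over $j$. If $p>p_0$, the inner sum is $\lesssim 2^{jn(p/p_0-1)}$, so the $j$-th term of the outer series is $\lesssim 2^{-j(\bz-n(p/p_0-1))}\|f\|_\cam$, which is summable precisely because $\bz>n(p/p_0-1)$; if $p=p_0$, the inner sum equals $j$, giving a $j$-th term $\lesssim j\,2^{-j\bz}\|f\|_\cam$; and if $p<p_0$, the inner sum is $O(1)$, giving a $j$-th term $\lesssim 2^{-j\bz}\|f\|_\cam$, the last two being summable since $\bz>0$. Hence the hypothesis $\bz\in(\max\{n(p/p_0-1),0\},\fz)$ is exactly the condition under which this series converges, and collecting the estimates gives the asserted bound $C\|f\|_\cam$. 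The step I expect to be the main obstacle is the comparison estimate $\|\chi_{2^jB}\|_{\lv}\lesssim 2^{jnp/p_0}\|\chi_B\|_{\lv}$: producing the sharp exponent $np/p_0$ there, rather than a merely exponential-in-$j$ doubling bound which would be useless for the summation, is what forces the dyadic series to converge under the stated, essentially optimal, restriction on $\bz$; once that estimate is in hand the rest is a routine geometric summation.
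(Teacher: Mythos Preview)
Your proof is correct and follows essentially the same route as the paper: dyadic decomposition of $\rn$ into $B$ and the annuli $2^jB\setminus 2^{j-1}B$, the H\"older estimate $\int_Q|f-f_Q|\,dy\le\|\chi_Q\|_{\lv}\|f\|_\cam$, the telescoping bound for $|f_{2^jB}-f_B|$, the key comparison $\|\chi_{2^jB}\|_{\lv}\lesssim 2^{jnp/p_0}\|\chi_B\|_{\lv}$ proved via uniform lower type $p_0$ plus the $\aa_p$ doubling $\vz(2^jB,t)\lesssim 2^{jnp}\vz(B,t)$, and the final geometric summation. Your case split according to whether $p/p_0$ is greater than, equal to, or less than $1$ is in fact slightly more careful than the paper, which writes the uniform bound $\int_{2^kB}|f-f_B|\,dy\lesssim 2^{kns}\|\chi_B\|_{\lv}\|f\|_\cam$ with $s=\max\{1,p/p_0\}$ and thereby silently drops a harmless factor of $k$ in the borderline case $p=p_0$.
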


\begin{proof}
Let $B:=B(x_0,r)\st\rn$, with $x_0\in\rn$ and $r\in(0,\fz)$,
and $f\in \cam$. Write
\begin{eqnarray}\label{4.m1}
&&  r^\bz\int_{\rn}\frac{|f(y)-f_B|}{r^{n+\bz}+|y-x_0|^{n+\bz}}\,dy\noz\\
&& \hs\le r^\bz\int_B\frac{|f(y)-f_B|}{r^{n+\bz}+|y-x_0|^{n+\bz}}\,dy
  +\sum_{k=1}^\fz r^\bz\int_{2^kB\bh 2^{k-1}B}\cdots\noz\\
&&\hs=:{\rm I}_0+\sum_{k=1}^\fz {\rm I}_k.
\end{eqnarray}

For ${\rm I_0}$, by the H\"older inequality,
we know that
\begin{eqnarray}\label{eq4.0}
{\rm I}_0
&& \ls \frac{1}{|B|}\int_B|f(y)-f_B|\,dy
   \ls \frac{\kb}{|B|}\|f\|_\cam.
\end{eqnarray}

For any $k\in\nn$, by the H\"older inequality again, we have
\begin{eqnarray}\label{eq4.1}
 |f_{2^kB}-f_B|
&& \le\sum_{j=1}^{k}|f_{2^jB}-f_{2^{j-1} B}|
 \le\sum_{j=1}^{k}\frac{1}{|2^{j-1}B|}\int_{2^{j-1}B}|f(y)-f_{2^jB}|\,dy\noz\\
&& \ls\sum_{j=1}^{k}\frac{\kjb}{|2^jB|}\|f\|_\cam.
\end{eqnarray}
Since $\vz\in\aa_{p}(\rn)$ and $\vz$ is of uniformly lower type $p_0$,
we see that, for all $j\in\zz_+$,
$$\vz\lf(2^jB, 2^{-jnp/p_0}\|\chi_{B}\|^{-1}_{L^\vz(\rn)}\r)\ls2^{-jnp}
\vz\lf(2^jB,\|\chi_{B}\|^{-1}_{L^\vz(\rn)}\r)\ls1,
$$
which further implies that, for all $j\in\zz_+$,
\begin{eqnarray*}
\|\chi_{2^jB}\|_{L^\vz(\rn)}
\ls2^{jnp/p_0}\|\chi_{B}\|_{L^\vz(\rn)}.
\end{eqnarray*}
By this, the H\"older inequality and \eqref{eq4.1},
we conclude that, for any $k\in\nn$,
\begin{eqnarray}\label{eq4.8}
\int_{2^kB}|f(y)-f_B|\,dy
&&\le \int_{2^kB}|f(y)-f_{2^kB}|\,dy +|2^kB||f_{2^kB}-f_B|\noz\\
&&\ls \kb\|f\|_\cam\lf[2^{kn\frac{p}{p_0}}+
2^{kn}\sum_{j=1}^k 2^{jn(\frac{p}{p_0}-1)}\r]\noz\\
&&\ls 2^{kn s}\kb\|f\|_\cam,
\end{eqnarray}
where $s:=\max\{1,p/p_0\}.$
By \eqref{eq4.8} and
$\bz\in (\max\{n(\frac{p}{p_0}-1),0\},\fz)$, we see that
\begin{eqnarray*}
\sum_{k=1}^\fz {\mi}_k
&&\le \sum_{k=1}^\fz \frac{r^\bz}{2^{k(n+\bz)}r^{n+\bz}}\int_{2^kB}|f(y)-f_B|\,dy\\
&&\ls \sum_{k=1}^\fz 2^{kn(s-1-\bz/n)}\frac{\kb}{|B|}\|f\|_\cam
\ls \frac{\kb}{|B|}\|f\|_\cam,
\end{eqnarray*}
which, together with \eqref{4.m1} and \eqref{eq4.0}, completes the proof of Lemma \ref{lem4.1}.
\end{proof}

One of the main results of this section is as follows.

\begin{theorem}\label{t4.1}
Let $\az\in(0,1]$, $q\in (1,\fz)$,
$\vz$ be a growth function as in Definition \ref{d-vz}
and $\vz\in \aa_p(\rn)$ with $p\in[1,\fz)$.
If $n(\frac{p}{p_0}-1)<\az$ and $p\le q'$,
then, for any $f\in \cam$, $g_\az(f)$ is either
infinite everywhere or finite almost everywhere and, in the latter case,
there exists a positive constant $C$, independent of $f$, such that
$$\|g_\az(f)\|_{\scam} \le C\|f\|_{\cam}.$$
\end{theorem}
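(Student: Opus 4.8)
The plan is to fix a ball $B:=B(x_0,r)\st\rn$ and to split the defining integral of $g_{\az}(f)$ at the scale $t=r$. Since $A_{\az}$ annihilates constants, $g_{\az}(f)=g_{\az}(f-f_{2B})$; write $f-f_{2B}=h_1+h_2$ with $h_1:=(f-f_{2B})\chi_{2B}$ and $h_2:=(f-f_{2B})\chi_{(2B)^\com}$, and put
$$L(x):=\lf\{\int_0^r[A_{\az}(h_1)(x,t)]^2\dt\r\}^{1/2},\qquad
N(x):=\lf\{\int_r^{\fz}[A_{\az}(f)(x,t)]^2\dt\r\}^{1/2}.$$
For $x\in B$ and $t\in(0,r)$ the ball $\{z\in\rn:\ |x-z|<t\}$ is contained in $2B$, so (using $\int_\rn\tz=0$) $A_{\az}(f)(x,t)=A_{\az}(h_1)(x,t)$; hence $g_{\az}(f)(x)=\{L(x)^2+N(x)^2\}^{1/2}$, which gives the two key pointwise bounds
$$N(x)\le g_{\az}(f)(x)\le L(x)+N(x),\qquad L(x)\le g_{\az}(h_1)(x)\qquad(x\in B).$$
The point is that the ``far'' part $N$ is essentially constant on $B$, while the ``near'' part $L\le g_{\az}(h_1)$ is tamed, after weighting, by the weighted $L^q$-boundedness of $g_{\az}$.

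First I would show the almost-constancy of $N$. For $x,\tilde x\in B$ and $t\ge r$, using that every $\tz\in\ca$ is $\az$-Lipschitz, supported in the unit ball and of mean zero,
$$|A_{\az}(f)(x,t)-A_{\az}(f)(\tilde x,t)|\le\sup_{\tz\in\ca}\lf|(f-f_{2B})\ast\tz_t(x)-(f-f_{2B})\ast\tz_t(\tilde x)\r|\ls\frac{r^{\az}}{t^{n+\az}}\int_{B(x_0,2t)}|f(z)-f_{2B}|\,dz,$$
and Minkowski's integral inequality in $t$ then gives
$$|N(x)-N(\tilde x)|\ls r^{\az}\int_\rn\frac{|f(z)-f_{2B}|}{r^{n+\az}+|z-x_0|^{n+\az}}\,dz.$$
Splitting $f-f_{2B}=(f-f_B)+(f_B-f_{2B})$, Lemma \ref{lem4.1} with $\bz=\az$ (admissible since the hypothesis $n(\frac{p}{p_0}-1)<\az$ gives $\az\in(\max\{n(\frac{p}{p_0}-1),0\},\fz)$) handles the first summand, and $|f_{2B}-f_B|\ls\kb|B|^{-1}\|f\|_\cam$ together with $r^{\az}\int_\rn(r^{n+\az}+|z-x_0|^{n+\az})^{-1}\,dz\ls1$ handles the second, so that
$$|N(x)-N(\tilde x)|\ls\frac{\kb}{|B|}\|f\|_\cam\qquad(x,\tilde x\in B).$$
Since $g_{\az}(h_1)$ is finite a.e.\ (it lies in a weighted $L^q$ space, as shown below) and $L\le g_{\az}(h_1)$, the displayed oscillation bound forces $N$, hence $g_{\az}(f)=\{L^2+N^2\}^{1/2}$, to be on each $B$ either a.e.\ infinite or a.e.\ finite; running $B$ over all balls yields the asserted dichotomy. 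Assume from now on that $g_{\az}(f)$ is finite a.e., so that $N_B:=\essinf_{y\in B}N(y)<\fz$.

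Now I turn to the Campanato estimate. From $g_{\az}(f)\ge N\ge N_B$ a.e.\ on $B$ we get $\essinf_{y\in B}g_{\az}(f)(y)\ge N_B$, while $g_{\az}(f)(x)\le L(x)+N(x)\le g_{\az}(h_1)(x)+N_B+C\kb|B|^{-1}\|f\|_\cam$ a.e.\ on $B$; subtracting, $0\le g_{\az}(f)(x)-\essinf_{y\in B}g_{\az}(f)(y)\ls g_{\az}(h_1)(x)+\kb|B|^{-1}\|f\|_\cam$ a.e.\ on $B$. Therefore, with $v:=[\vz(\cdot,\vb)]^{1-q}$,
$$\int_B\lf[\frac{g_{\az}(f)(x)-\essinf_{y\in B}g_{\az}(f)(y)}{\vz(x,\vb)}\r]^q\vz(x,\vb)\,dx\ls\int_\rn[g_{\az}(h_1)(x)]^qv(x)\,dx+\lf(\frac{\kb}{|B|}\|f\|_\cam\r)^q\int_Bv(x)\,dx.$$
For the second term, $\int_B\vz(x,\vb)\,dx=1$ by the definition of $\vb$, and, since $\vz\in\aa_p(\rn)$, the weight $\vz(\cdot,\vb)$ belongs to $A_p(\rn)\st A_{q'}(\rn)$ (here the hypothesis $p\le q'$ is used), whence $\int_Bv(x)\,dx\ls|B|^q$; so this term is $\ls\kb^q\|f\|_\cam^q$. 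For the first term, $v\in A_q(\rn)$ because $\vz(\cdot,\vb)\in A_{q'}(\rn)$ and $q=(q')'$, with $A_q$-constant uniform in $B$ by the uniformly Muckenhoupt condition; as $g_{\az}$ is pointwise comparable to $\sa$ (see \cite[p.\,774]{w07}) and $\sa$ is bounded on $L^q_v(\rn)$ (see \cite[Theorem 7.2]{w08}),
$$\int_\rn[g_{\az}(h_1)(x)]^qv(x)\,dx\ls\int_\rn|h_1(x)|^qv(x)\,dx=\int_{2B}|f(x)-f_{2B}|^q[\vz(x,\vb)]^{1-q}\,dx\ls\kb^q\|f\|_\cam^q,$$
where the last step uses $\|\chi_{2B}\|_{\lv}\sim\kb$ and $\vz(x,\vb)\sim\vz(x,\|\chi_{2B}\|_{\lv}^{-1})$ (from the uniform upper and lower types of $\vz$) to compare with the definition of $\|f\|_\cam$ for the ball $2B$. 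Dividing by $\kb^q$, taking $q$-th roots and the supremum over all $B$ gives $\|g_{\az}(f)\|_{\scam}\ls\|f\|_\cam$.

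The main obstacle is the choice of the subtracted constant in the oscillation: neither $\essinf_{y\in B}g_{\az}(f)(y)$ nor $g_{\az}(h_1)$ can be controlled pointwise, and the estimate closes only because $g_{\az}(f)\ge N$ while $g_{\az}(f)\le L+N$ with $N$ almost constant on $B$, so the finite quantity $N_B=\essinf_{y\in B}N(y)$ may be inserted and cancels. The remaining effort goes into the two regularity inputs: the $\az$-smoothness estimate feeding Lemma \ref{lem4.1}, and the uniform-in-$B$ implication $\vz\in\aa_p(\rn)\Rightarrow[\vz(\cdot,\vb)]^{1-q}\in A_q(\rn)$, via $A_p(\rn)\st A_{q'}(\rn)$ and the duality $w\in A_{q'}(\rn)\Leftrightarrow w^{1-q}\in A_q(\rn)$.
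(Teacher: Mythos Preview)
Your proof is correct and follows the same overall strategy as the paper: split the $t$-integral at $t=r$, control the near part via the weighted $L^q$-boundedness of $g_\alpha$ (through $\sa$) on the weight $v=[\vz(\cdot,\vb)]^{1-q}\in A_q(\rn)$, and control the oscillation of the far part $N$ using the $\alpha$-smoothness of the kernels together with Lemma~\ref{lem4.1}. Your bookkeeping via $N_B=\essinf_{y\in B}N(y)$ and the inequality $N\le g_\alpha(f)\le L+N$ is exactly the mechanism behind the paper's decomposition $g_\alpha(f)(x)-\inf_{\xx\in B}g_\alpha(f)(\xx)\le {\rm I}_1+{\rm I}_2+{\rm I}_3$.

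There is one genuine, if minor, streamlining in your argument. For the tail oscillation the paper splits $A_\alpha(f)(x,t)-A_\alpha(f)(\xx,t)$ into an inner piece over $B$ (bounded using only $\|\tz\|_\infty\ls1$) and an outer piece over $B^\com$ (bounded via the decay estimate \eqref{eq4.12} with an auxiliary parameter $\ez\in(\max\{n(p/p_0-1),0\},\alpha)$), and then invokes Lemma~\ref{lem4.1} with $\bz=\ez$. You instead use only the bare $\alpha$-Lipschitz bound $|\tz(u)-\tz(v)|\le|u-v|^\alpha$ together with the support restriction $\supp\tz_t\subset B(0,t)$, obtaining directly the kernel $r^\alpha/(r^{n+\alpha}+|z-x_0|^{n+\alpha})$ and invoking Lemma~\ref{lem4.1} with $\bz=\alpha$; this avoids \eqref{eq4.12} and the extra parameter. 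Your choice of $f_{2B}$ rather than $f_B$ as the subtracted constant likewise saves the separate handling of $|f_{2B}-f_B|$ in the near term that the paper carries out in \eqref{eq4.2}. Both approaches rely on the same inputs (Lemma~\ref{lem4.1}, $\vz\in\aa_p\subset\aa_{q'}$, and $\|\chi_{2B}\|_{\lv}\sim\kb$), so the difference is one of packaging rather than substance.
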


\begin{proof}
We only need to show that, for all $f\in\cam$,
if there exists some $u\in \rn$ such that $g_\az(f)(u)<\fz$,
then, for any ball $B:=B(x_0, r)\st\rn$, with $x_0\in\rn$ and
$r\in(0,\fz)$, and $B\ni u$,
\begin{eqnarray*}
&&\lf\{\int_{B}
 \lf[g_\az(f)(x)-\inf_{\xx\in B}g_\az(f)(\xx)\r]^q
 \lf[\vz\lf(x,\vb\r)\r]^{1-q}\,dx\r\}^{1/q}\\
&&\hs\ls \kb\|f\|_\cam.
\end{eqnarray*}

To this end, for any $x\in B$, since, for any $\tz\in\ca$, $\int_\rn \tz(x)\,dx=0$
and $$\inf_{\xx\in B}g_\az(f)(\xx)\le g_\az(f)(u)<\fz,$$ we write
\begin{eqnarray}\label{cg1}
&& g_\az(f)(x)-\inf_{\xx\in B}g_\az(f)(\xx)\noz\\
&&\hs\le\lf\{\int_0^r[A_\az([f-f_B]\chi_{2B})(x,t)]^2\dt\r\}^{1/2}\noz\\
&&\hs\hs+\lf\{\int_0^r[A_\az([f-f_B]\chi_{(2B)^\com})(x,t)]^2\dt\r\}^{1/2}\noz\\
&&\hs\hs+\sup_{\xx\in B}\lf|\lf\{\int_r^\fz[A_\az(f)(x,t)]^2\dt\r\}^{1/2}
-\lf\{\int_r^\fz[A_\az(f)(\xx,t)]^2\dt\r\}^{1/2}\r|\noz\\
&&\hs  =:{\rm I}_1(x)+{\rm I}_2(x)+{\rm I}_3(x).
\end{eqnarray}

Since $\vz\in \aa_p(\rn)$ and $1\le p\le q'$, we have $\vz\in \aa_{q'}(\rn)$
and $$[\vz(\cdot, \vb)]^{1-q}\in A_q(\rn).$$
From \eqref{eq4.1}, the fact that $\sa$ is bounded
on $L^q_w(\rn)$ with $q\in(1,\fz)$ and $w\in A_q(\rn)$ (see \cite[Theorem 7.2]{w08})
and $g_\az(f)(x)$ and $\sa(f)(x)$ are pointwise comparable for all $x\in\rn$,
it follows that
\begin{eqnarray}\label{eq4.2}
&&\lf\{\int_B [{\rm I}_1(x)]^q \lf[\vz\lf(x,\vb\r)\r]^{1-q}\,dx \r\}^{1/q}\noz\\
&&\hs\ls\lf\{\int_\rn [g_\az([f-f_B]\chi_{2B})(x)]^q \lf[\vz\lf(x,\vb\r)\r]^{1-q}\,dx\r\}^{1/q}\noz\\
&&\hs\ls\lf\{\int_{2B} |f(x)-f_B|^q \lf[\vz\lf(x,\vb\r)\r]^{1-q}\,dx\r\}^{1/q}\noz\\
&&\hs\ls\lf\{\int_{2B} |f(x)-f_{2B}|^q \lf[\vz\lf(x,\|\chi_{2B}\|_{L^\vz(\rn)}^{-1}\r)\r]^{1-q}\,dx\r\}^{1/q}\noz\\
&&\hs\hs+\lf\{\int_{2B} |f_{2B}-f_B|^q
\lf[\vz\lf(x,\|\chi_{2B}\|_{L^\vz(\rn)}^{-1}\r)\r]^{1-q}\,dx\r\}^{1/q}\noz\\
&&\hs\ls\kb\|f\|_\cam
+ |f_{2B}-f_B|\lf\{\int_{2B}\lf[\vz\lf(x,\|\chi_{2B}\|_{L^\vz(\rn)}^{-1}\r)\r]^{1-q}\,dx\r\}^{1/q}\noz\\
&&\hs\ls\kb\|f\|_\cam
\lf(1+\frac{1}{|2B|}\lf\{\int_{2B}\lf[\vz\lf(x,\|\chi_{2B}\|_{L^\vz(\rn)}^{-1}\r)\r]^{1-q}\,dx\r\}^{1/q}\r)\noz\\
&&\hs\ls\kb\|f\|_\cam.
\end{eqnarray}

To estimate ${\rm I}_2(x)$, since, for any $z\in (2B)^\com$, $x\in B$ and $t\in (0,r)$,
we have
$|x-z|\geq|x_0-z|-|x-x_0|>2r-r>t$,
by the fact that, for any $\tz\in\ca$, $\supp \tz\st B(0,1)$,
we conclude that
\begin{eqnarray*}
A_\az([f-f_B]\chi_{(2B)^\com})(x,t)=
\sup_{\tz\in\ca}\lf|\frac{1}{t^n}\int_{(2B)^\com}
\tz\lf(\frac{x-z}{t}\r)[f(z)-f_B]\,dz\r|=0.
\end{eqnarray*}
Thus, for all $x\in B$, ${\rm I}_2(x)\equiv0$

For any $x,\,\xx\in B$, from the Minkowski inequality and
the fact that, for any $\tz\in\ca$, $\int_\rn \tz(x)\,dx=0$, we deduce that
\begin{eqnarray*}
&& \lf|\lf\{\int_r^\fz[A_\az(f)(x,t)]^2\dt\r\}^{1/2}
   -\lf\{\int_r^\fz[A_\az(f)(\xx,t)]^2\dt\r\}^{1/2}\r|\\
&& \hs\le \lf\{\int_r^\fz[A_\az(f)(x,t)-A_\az(f)(\xx,t)]^2\dt\r\}^{1/2}\\
&& \hs\le \lf\{\int_r^\fz\lf[\sup_{\tz\in\ca}\int_\rn
       |\tz_t(x-z)-\tz_t(\xx-z)||f(z)-f_B|\,dz\r]^2\dt\r\}^{1/2}\\
&& \hs\le \lf\{\int_r^\fz\lf[\sup_{\tz\in\ca}\int_B
       |\tz_t(x-z)-\tz_t(\xx-z)||f(z)-f_B|\,dz\r]^2\dt\r\}^{1/2}\\
&& \hs \hs +\lf\{\int_r^\fz\lf[\sup_{\tz\in\ca}\int_{B^\com}
   \cdots\,dz\r]^2\dt\r\}^{1/2}
 =:{\rm J}_{1}+{\rm J}_{2}.
\end{eqnarray*}

For ${\rm J}_{1}$, since $\tz\in\ca$ is uniformly bounded, we have
\begin{eqnarray}\label{eq4*2}
{\rm J}_{1}
&&\ls\lf\{\int_r^\fz\lf[\int_B\frac{1}{t^n}|f(z)-f_B|\,dz\r]^2\dt\r\}^{1/2}\noz\\
&&\ls\lf\{\int_r^\fz\,\frac{dt}{t^{2n+1}}\r\}^{1/2}\int_B|f(z)-f_B|\,dz\noz\\
&&\ls\frac{1}{|B|}\int_B|f(z)-f_B|\,dz
\ls\frac{\kb}{|B|}\|f\|_\cam.
\end{eqnarray}

For ${\rm J}_{2}$, since, for all $t\in(r,\fz)$, $x,\,\xx\in B$ and $z\in B^\com$,
we have $t+|x-z|> |x_0-z|$ and $t+|\xx-z|> |x_0-z|$,
from this, the Minkowski inequality,
the fact that, for $\az\in(0,1]$,
$\ez\in(\max\{n(p/p_0-1),0\},\az)$, there exists a positive
constant $C$ such that, for any $\tz\in\ca$, and $x_1,\, x_2\in\rn$,
\begin{eqnarray}\label{eq4.12}
|\tz(x_1)-\tz(x_2)|\le C|x_1-x_2|^\az[(1+|x_1|)^{-n-\ez}+(1+|x_2|)^{-n-\ez}]
\end{eqnarray}
(see \cite[p.\,775]{w07}) and Lemma \ref{lem4.1},
it follows that, for any $x,\,\xx\in B$,
\begin{eqnarray}\label{eq4*1}
{\rm J}_{2}
&& \ls \lf(\int_r^\fz\lf\{\int_{B^\com}\frac{1}{t^n}
\lf(\frac{|x-\xx|}{t}\r)^{\az}\lf[\lf(\frac{t}{t+|x-z|}\r)^{n+\ez}\r.\r.\r.\noz\\
&& \hs + \lf.\lf.\lf. \lf(\frac{t}{t+|\xx-z|}\r)^{n+\ez}\r]|f(z)-f_B|\,dz\r\}^2\dt\r)^{1/2}\noz\\
&& \ls \lf\{\int_r^\fz\lf[\int_{B^\com}\frac{1}{t^n}\lf(\frac{r}{t}\r)^\az
\lf(\frac{t}{|x_0-z|}\r)^{n+\ez}|f(z)-f_B|\,dz\r]^2\dt\r\}^{1/2}\noz\\
&& \ls \int_{B^\com}|f(z)-f_B|\lf\{\int_r^\fz\frac{1}{t^{2n}}
\lf(\frac{r}{t}\r)^{2\az}\lf(\frac{t}{|x_0-z|}\r)^{2(n+\ez)}\dt\r\}^{1/2}\,dz\noz\\
&& \ls \int_{B^\com}\frac{r^\ez|f(z)-f_B|}{|x_0-z|^{n+\ez}}\,dz
\ls \frac{\kb}{|B|}\|f\|_\cam,
\end{eqnarray}
which, together with \eqref{eq4*2},
$\vz\in \aa_p(\rn) \st\aa_{q'}(\rn)$ and $\vz(B,\vb)=1$,
further implies that
\begin{eqnarray}\label{eq4.3}
&& \lf\{\int_B [{\rm I}_3(x)]^q \lf[\vz\lf(x,\vb\r)\r]^{1-q}\,dx \r\}^{1/q}\noz\\
&& \hs\ls \kb\|f\|_\cam\frac{1}{|B|}\lf\{\int_B \lf[\vz\lf(x,\vb\r)\r]^{1-q}\,dx\r\}^{1/q}\noz\\
&& \hs\ls \kb\|f\|_\cam.
\end{eqnarray}

Combining \eqref{cg1}, \eqref{eq4.2} and \eqref{eq4.3}, we know that
\begin{eqnarray*}
&&\lf\{\int_B \lf[g_\az(f)(x)-\inf_{\xx\in B}g_\az(f)(\xx)\r]^q
\lf[\vz\lf(x,\vb\r)\r]^{1-q}\,dx \r\}^{1/q}\\
&&\hs\ls \kb\|f\|_\cam,
\end{eqnarray*}
which completes the proof of Theorem \ref{t4.1}.
\end{proof}

\begin{corollary}\label{cor-gBMO}
Let $\az\in(0,1]$,
$\vz$ be a growth function satisfying $0<p_0\le p_1\le 1$ and $\vz\in \aa_p(\rn)$
with $p\in [1,\fz).$ If $n(\frac{p}{p_0}-1)<\az$, then, for any
$f\in \mathcal{L}^{\vz,1}(\rn)$, $g_\az(f)$ is either infinite everywhere or finite
almost everywhere and, in the latter case, there exists a positive constant $C$,
independent of $f$, such that
$$\|g_\az(f)\|_\clo\le C\|f\|_\cmo.$$
\end{corollary}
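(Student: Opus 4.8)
The plan is to reduce this endpoint exponent $q=1$ to the range $q\in(1,\fz)$ already handled by Theorem \ref{t4.1}. The reduction rests on two facts. First, under the standing hypotheses ($0<p_0\le p_1\le1$ and $\vz\in\aa_p(\rn)$), the spaces $\cmo$ and $\cam$ coincide, with equivalent quasi-norms, for a suitably chosen $q\in(1,\fz)$. Second, for that same $q$, one has $\scam\st\clo$ with $\|\cdot\|_\clo\le\|\cdot\|_\scam$. Granting these, for $f\in\cmo=\cam$ the dichotomy ``$g_\az(f)$ infinite everywhere or finite a.e.'' is inherited verbatim from Theorem \ref{t4.1}, and in the finite case one obtains $\|g_\az(f)\|_\clo\le\|g_\az(f)\|_\scam\ls\|f\|_\cam\ls\|f\|_\cmo$.

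To set up the first fact I would fix $q\in(1,\fz)$ with $p\le q'$; since $p\in[1,\fz)$ such $q$ exists (take $q=p'$, hence $q'=p$, when $p>1$, and any $q\in(1,\fz)$ when $p=1$). Because $\vz$ is a growth function with $0<p_0\le p_1\le1$ and $\vz\in\aa_p(\rn)$, the John--Nirenberg type characterization of Musielak--Orlicz Campanato spaces from \cite{ly13} applies and shows that $\cam$, for this $q$, equals $\cmo$ with $\|f\|_\cam\sim\|f\|_\cmo$ (this is where the restriction $p_1\le1$, rather than merely $p_1<\fz$, enters).

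For the second fact, fix a ball $B\st\rn$. By Remark \ref{rem-vz}(ii) we may assume $\vz(x,\cdot)$ continuous and strictly increasing, so the definition of $\kb$ forces $\vz(B,\vb)=1$. For $g\in L^1_\loc(\rn)$ set $m:=\essinf_{y\in B}g(y)$, so $g-m\ge0$ a.e. on $B$. The H\"older inequality with exponents $q$ and $q'$ gives
\begin{align*}
\frac{1}{\kb}\int_B[g(x)-m]\,dx
&=\frac{1}{\kb}\int_B[g(x)-m]\lf[\vz\lf(x,\vb\r)\r]^{(1-q)/q}\lf[\vz\lf(x,\vb\r)\r]^{(q-1)/q}\,dx\\
&\le\frac{1}{\kb}\lf\{\int_B[g(x)-m]^q\lf[\vz\lf(x,\vb\r)\r]^{1-q}\,dx\r\}^{1/q}[\vz(B,\vb)]^{1/q'},
\end{align*}
and the last factor equals $1$. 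Taking the supremum over all balls $B$ yields $\|g\|_\clo\le\|g\|_\scam$, that is, $\scam\st\clo$.

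Finally, for $f\in\cmo$ we have $f\in\cam$ with $\|f\|_\cam\ls\|f\|_\cmo$; Theorem \ref{t4.1}, applied with the chosen $q$ (the hypotheses $n(\frac{p}{p_0}-1)<\az$ and $p\le q'$ both hold), shows $g_\az(f)$ is infinite everywhere or finite a.e., and in the latter case $\|g_\az(f)\|_\scam\ls\|f\|_\cam$; the embedding $\scam\st\clo$ then completes the proof. The main obstacle is the identification $\cmo=\cam$ taken from \cite{ly13}: the proof of Theorem \ref{t4.1} cannot be copied at $q=1$, since there $g_\az$ is estimated through the $L^q_w$-boundedness of $\sa$ with $w\in A_q$, which at $q=1$ degenerates to a mere weak-type bound, so the step leading to \eqref{eq4.2} breaks down. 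The remaining pieces---choosing $q$, the H\"older estimate, and the appeal to Theorem \ref{t4.1}---are routine.
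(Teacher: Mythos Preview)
Your proposal is correct and follows essentially the same route as the paper: both arguments reduce the case $q=1$ to Theorem~\ref{t4.1} by invoking the equivalence $\|f\|_\cam\sim\|f\|_\cmo$ from \cite[Theorem~2.7]{ly13} and then passing from the $\scam$ quantity to the $\clo$ quantity via H\"older's inequality together with $\vz(B,\vb)=1$. The only cosmetic difference is that the paper takes $q\in(1,p')$ (strictly), whereas you suggest $q=p'$; since \cite[Theorem~2.7]{ly13} is stated for $q\in(1,p')$, you should choose $q$ just below $p'$, which still satisfies $p\le q'$ as Theorem~\ref{t4.1} requires.
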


\begin{proof}
We only need to show that, for all $f\in \mathcal{L}^{\vz,1}(\rn)$,
if there exists some $u\in\rn$ such that $g_\az(f)(u)<\fz$, then, for any ball
$B:=B(x_0, r)\st\rn$, with $x_0\in\rn$ and
$r\in(0,\fz)$, and $B\ni u$,
$$\lf\{\int_B\lf[g_\az(f)(x)-\inf_{\xx\in B}g_\az(f)(\xx)\r]\,dx\r\}\ls \kb\|f\|_\cmo.$$
Since $0<p_0\le p_1\le 1$, by \cite[Theorem 2.7]{ly13}, we find that,
for any $q\in (1,p')$, $\|f\|_\cam\sim\|f\|_\cmo$.
By this, the H\"older inequality, $\vz(B,\vb)=1$
and Theorem \ref{t4.1}, we have
\begin{eqnarray*}
&& \frac{1}{\kb}\int_B\lf[g_\az(f)(x)-\inf_{\xx\in B}g_\az(f)(\xx)\r]\,dx\\
&& \hs\le \frac{1}{\kb}
    \lf\{\int_B\lf[g_\az(f)(x)-\inf_{\xx\in B}g_\az(f)(\xx)\r]^q\lf[\vz\lf(x,\vb\r)\r]^{1-q}\,dx\r\}^{1/q}\\
&& \hs\ls \|f\|_\cam\sim\|f\|_\cmo.
\end{eqnarray*}
This finishes the proof of Corollary \ref{cor-gBMO}.
\end{proof}

On $\sa$, we have the following boundedness from $\cam$ to $\scam$.

\begin{theorem}\label{t4.2}
Let $\az\in(0,1]$, $q\in (1,\fz)$,
$\vz$ be a growth function as in Definition \ref{d-vz}
and $\vz\in \aa_p(\rn)$ with $p\in[1,\fz)$.
If $n(\frac{p}{p_0}-1)<\az$ and $p\le q'$,
then, for any $f\in \cam$, $\sa(f)$ is either
infinite everywhere or finite almost everywhere and, in the latter case,
there exists a positive constant $C$, independent of $f$, such that
$$\|\sa(f)\|_{\scam} \le C\|f\|_{\cam}.$$
\end{theorem}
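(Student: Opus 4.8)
The plan is to mirror the proof of Theorem~\ref{t4.1}, replacing the intrinsic $g$-function by $\sa$ and isolating the one genuinely new ingredient, which comes from the fact that the cones with vertices at two different points of a ball do not coincide. As there, it suffices to prove that, for all $f\in\cam$, if $\sa(f)(u)<\fz$ for some $u\in\rn$, then, for every ball $B:=B(x_0,r)\st\rn$ with $u\in B$,
$$
\lf\{\int_B\lf[\sa(f)(x)-\inf_{\xx\in B}\sa(f)(\xx)\r]^q
\lf[\vz\lf(x,\vb\r)\r]^{1-q}\,dx\r\}^{1/q}\ls\kb\|f\|_\cam .
$$
I would write $S_{<}(x)$ and $S_{>}(x)$ for the quantities obtained from $\sa(f)(x)$ by restricting the integration in $t$ to $(0,r)$ and to $[r,\fz)$, respectively. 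Since $\sa(f)\le S_{<}+S_{>}$ pointwise by Minkowski's inequality and $\inf_{\xx\in B}\sa(f)(\xx)\ge\inf_{\xx\in B}S_{>}(\xx)$, for a.e. $x\in B$ one gets
$$
\sa(f)(x)-\inf_{\xx\in B}\sa(f)(\xx)\le S_{<}(x)+\sup_{\xx\in B}\lf|S_{>}(x)-S_{>}(\xx)\r| ,
$$
so it remains to bound the weighted $L^q$-norm over $B$ of each of the two terms on the right (finiteness a.e.\ of $\sa(f)$ then follows from these bounds).

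For $S_{<}(x)$ with $x\in B$, note that if $|y-x|<t$ and $t<r$, then $B(y,t)\st 3B$, so, using $\int_\rn\tz=0$ for each $\tz\in\ca$, one has $A_\az(f)(y,t)=A_\az((f-f_B)\chi_{3B})(y,t)$ for all such $(y,t)$, whence $S_{<}(x)\le\sa((f-f_B)\chi_{3B})(x)$. Since $1\le p\le q'$ gives $\vz\in\aa_p(\rn)\st\aa_{q'}(\rn)$, the weight $w:=[\vz(\cdot,\vb)]^{1-q}$ lies in $A_q(\rn)$, and the $L^q_w(\rn)$-boundedness of $\sa$ (\cite[Theorem 7.2]{w08}) together with the Campanato-space estimates already carried out for \eqref{eq4.2} yields $\{\int_B[S_{<}(x)]^q w(x)\,dx\}^{1/q}\ls\{\int_{3B}|f(x)-f_B|^q w(x)\,dx\}^{1/q}\ls\kb\|f\|_\cam$.

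The core of the argument is the bound on $\sup_{\xx\in B}|S_{>}(x)-S_{>}(\xx)|$. I would fix $x,\xx\in B$, set $h:=\xx-x$ (so $|h|<2r\le 2t$ for $t\ge r$), and translate the integration variable by $h$, turning the integral defining $S_{>}(\xx)$ into one over the same region $\{(y,t):|y-x|<t,\ t\ge r\}$ as $S_{>}(x)$; Minkowski's inequality then gives
$$
\lf|S_{>}(x)-S_{>}(\xx)\r|\le\lf\{\int_r^\fz\int_{|y-x|<t}
\lf[A_\az(f)(y,t)-A_\az(f)(y+h,t)\r]^2\dytn\r\}^{1/2}.
$$
Using $\int_\rn\tz=0$, the integrand is at most $\sup_{\tz\in\ca}\int_\rn|\tz_t(y-z)-\tz_t(y+h-z)|\,|f(z)-f_B|\,dz$; applying the regularity estimate \eqref{eq4.12} for $\tz\in\ca$ with some $\ez\in(\max\{n(\frac{p}{p_0}-1),0\},\az)$ (which exists precisely because $n(\frac{p}{p_0}-1)<\az$), using $t+|y-z|\sim t+|z-x_0|$ for $|y-x|<t$, $t\ge r$, and then Lemma~\ref{lem4.1} applied to the ball $B(x_0,t)$ (with $f_{B(x_0,t)}-f_B$ controlled via the estimates in the proof of Lemma~\ref{lem4.1}), one reaches
$$
\lf|A_\az(f)(y,t)-A_\az(f)(y+h,t)\r|
\ls\lf(\frac{t}{r}\r)^{n(\frac{p}{p_0}-1)-\az}\frac{\kb}{|B|}\|f\|_\cam .
$$
Inserting this and performing the $t$-integration (convergent exactly because $n(\frac{p}{p_0}-1)-\az<0$) gives $\sup_{\xx\in B}|S_{>}(x)-S_{>}(\xx)|\ls\kb\|f\|_\cam/|B|$ for a.e.\ $x\in B$, and since $\{\int_B[\vz(x,\vb)]^{1-q}\,dx\}^{1/q}\ls|B|$ (as used for \eqref{eq4.3}) this yields $\{\int_B[\sup_{\xx\in B}|S_{>}(x)-S_{>}(\xx)|]^q[\vz(x,\vb)]^{1-q}\,dx\}^{1/q}\ls\kb\|f\|_\cam$. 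Combining the two estimates and invoking Definition~\ref{d-Cams} finishes the proof.

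The main obstacle is this last oscillation estimate: in the proof of Theorem~\ref{t4.1} one compares $A_\az(f)$ at $x$ and $\xx$ at the same scale $t$ directly, but here the truncated cones with vertices $x$ and $\xx$ are genuinely distinct, so one must first translate one cone onto the other in order to keep the cancellation of $\tz\in\ca$ available through \eqref{eq4.12}; it is exactly at this point that the hypothesis $n(\frac{p}{p_0}-1)<\az$ is used, both to select the exponent $\ez$ and to make the resulting $t$-integral converge.
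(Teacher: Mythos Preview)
Your proposal is correct and follows essentially the same approach as the paper: split the $t$-integral at scale $r$ (the paper uses $r/2$), handle the small-$t$ part via the $L^q_w$ boundedness of $\sa$ after localizing to a multiple of $B$, and for the large-$t$ difference translate one cone onto the other, apply Minkowski, and use the regularity estimate \eqref{eq4.12} together with Lemma~\ref{lem4.1}. The only minor deviation is that you invoke Lemma~\ref{lem4.1} at the variable ball $B(x_0,t)$ and then compare back to $B$, whereas the paper splits the $z$-integral into $B$ and $B^\com$ and applies Lemma~\ref{lem4.1} directly at $B$; both routes yield the same bound (your displayed exponent should read $n(s-1)-\az$ with $s=\max\{p/p_0,1\}$, but since this is still negative under the hypothesis, the argument goes through unchanged).
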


\begin{proof}
We only need to show that, for all $f\in\cam$,
if there exists some $u\in \rn$ such that $\sa(f)(u)<\fz$,
then, for all balls $B:=B(x_0, r)\st\rn$, with $x_0\in\rn$ and
$r\in(0,\fz)$, and $B\ni u$,
\begin{eqnarray*}
&&\lf\{\int_{B}
\lf[\sa(f)(x)-\inf_{\xx\in B}\sa(f)(\xx)\r]^q\!\!\lf[\vz\lf(x,\vb\r)\r]^{1-q}\,dx\r\}^{1/q}\\
&&\hs\ls \kb\|f\|_\cam.
\end{eqnarray*}

To this end, for any $x\in B$, since, for any $\tz\in\ca$, $\int_\rn \tz(x)\,dx=0$
and $$\inf_{\xx\in B}\sa(f)(\xx)\le \sa(f)(u)<\fz,$$ we write
\begin{eqnarray}\label{cs1}
&&\sa(f)(x)-\inf_{\xx\in B}\sa(f)(\xx)\noz\\
&&\hs\le \lf\{\int_0^{r/2}\int_{|x-y|<t}
[A_\az([f-f_B]\chi_{2B})(y,t)]^2\dtn\r\}^{1/2}\noz\\
&&\hs\hs
+\lf\{\int_0^{r/2}\int_{|x-y|<t}
[A_\az([f-f_B]\chi_{(2B)^\com})(y,t)]^2\dtn\r\}^{1/2}\noz\\
&& \hs\hs
+\sup_{\xx\in B}\lf|\lf\{\int_{r/2}^\fz\int_{|x-y|<t}
[A_\az(f)(y,t)]^2\dtn\r\}^{1/2}\r.\noz\\
&&\hs\hs \lf.- \lf\{\int_{r/2}^\fz\int_{|\xx-y|<t}
[A_\az(f)(y,t)]^2\dtn\r\}^{1/2}\r|\noz\\
&&\hs=:{\rm I}_1(x)+{\rm I}_2(x)+{\rm I}_3(x).
\end{eqnarray}

For ${\rm I}_1(x)$,
by using an argument similar to that used in
the estimate for \eqref{eq4.2},
we have
\begin{eqnarray}\label{eq4.4}
&&\lf\{\int_B
   [{{\rm I}_1(x)}]^q\lf[\vz\lf(x,\vb\r)\r]^{1-q} \,dx\r\}^{1/q}\noz\\
&&\hs\ls \kb\|f\|_\cam.
\end{eqnarray}

For ${\rm I}_2(x)$, $x\in B$, noticing that, for any $\tz\in \ca $, $\supp \tz \st B(0,1)$,
$|x-y|<t$ and $t\in (0,r/2)$, we have $|y-x_0|<3r/2$, by this,
together with $z\in (2B)^\com$,
we further see that $|y-z|\geq |z-x_0|-|x_0-y|>2r-\frac{3r}{2}>t$ and hence
\begin{eqnarray*}
A_\az([f-f_B]\chi_{(2B)^\com})(y,t)=
\sup_{\tz\in\ca}\frac{1}{t^n}\lf|\int_{(2B)^\com}\tz\lf(\frac{y-z}{t}\r)[f(z)-f_B]\,dz\r|=0.
\end{eqnarray*}
Thus, for all $x\in B$, ${\rm I}_2(x)\equiv 0$.

For any $x,\,\xx\in B$, from the Minkowski inequality and
the fact that, for any $\tz\in\ca$, $\int_\rn \tz(x)\,dx=0$, we deduce that
\begin{eqnarray*}
&& \lf|\lf\{\int_{r/2}^\fz\int_{|x-y|<t}
[A_\az(f)(y,t)]^2\dtn\r\}^{1/2}
 - \lf\{\int_{r/2}^\fz\int_{|\xx-y|<t}
[A_\az(f)(y,t)]^2\dtn\r\}^{1/2}\r|\noz\\
&& \hs=\lf| \lf\{\int_{r/2}^{\fz}\int_{B(x_0,t)}
    [A_\az(f)(y-x_0+x,\, t)]^2\dytn\r\}^{1/2}\r.\\
&&   \hs\hs - \lf.\lf\{\int_{r/2}^{\fz}\int_{B(x_0,t)}
    [A_\az(f)(y-x_0+\xx,\, t)]^2\dytn\r\}^{1/2} \r|\\
&& \hs\le \lf\{\int_{r/2}^{\fz}\int_{B(x_0,t)}
   \lf|A_\az(y-x_0+x,\, t)-A_\az(y-x_0+\xx,\, t)\r|^2\dytn\r\}^{1/2}\\
&& \hs\ls \lf\{\int_{r/2}^{\fz}\int_{B(x_0,t)}
  \lf[\sup_{\tz\in\ca}\int_{B}|\tz_{t}(y-x_0+x-z)\r.\r.\\
&& \hs\hs -\tz_{t}(y-x_0+\xx-z)||f(z)-f_B|\,dz\Bigg]^2\dytn\Bigg\}^{1/2}\\
&& \hs\hs + \lf\{\int_{r/2}^{\fz}\int_{B(x_0,t)}
  \lf[\sup_{\tz\in\ca}\int_{B^\com}\cdots\,dz\r]^2\dytn\r\}^{1/2}\\
&&\hs =:{\rm J}_{1}+{\rm J}_{2}.
\end{eqnarray*}

For ${\rm J}_{1}$, since $\tz\in\ca$ is uniformly bounded,
by using an argument similar to that used in
the estimate for \eqref{eq4*2},
we have
\begin{eqnarray}\label{eq4.6}
{\rm J}_{1}\ls \frac{\kb}{|B|}\|f\|_\cam.
\end{eqnarray}

For ${\rm J}_{2}$, from \eqref{eq4.12}, we deduce that,
for any $x,\,\xx\in B$, $y\in B(x_0,t)$, $t\in(r/2,\fz)$, $z\in B^\com$
and $\tz\in\ca$,
$$|x_0-z|< 3t+|y-x_0+x-z|,\quad |x_0-z|< 3t+|y-x_0+\xx-z|$$ 
and hence
\begin{eqnarray*}
&& |\tz_t(y-x_0+x-z)-\tz_t(y-x_0+\xx-z)|\\
&& \hs\ls \frac{1}{t^n}\lf(\frac{|x-\xx|}{t}\r)^\az\lf[\lf(\frac{t}{t+|y-x_0+x-z|}\r)^{n+\ez}+
      \lf(\frac{t}{t+|y-x_0+\xx-z|}\r)^{n+\ez}\r]\\
&& \hs\ls \frac{1}{t^n}\lf(\frac{|x-\xx|}{t}\r)^\az\lf(\frac{t}{|x_0-z|}\r)^{n+\ez},
\end{eqnarray*}
which, together with Lemma \ref{lem4.1} and an argument similar to
that used in the estimate for \eqref{eq4*1},
further implies that
\begin{eqnarray}\label{eq4.7}
{\rm J}_{2}
\ls \frac{\kb}{|B|}\|f\|_{\cam}.
\end{eqnarray}

Combining \eqref{eq4.6} with \eqref{eq4.7},
by an argument similar to that used in the estimate for \eqref{eq4.3},
we obtain
\begin{eqnarray*}
&& \lf\{\int_B [{\rm I}_3(x)]^q \lf[\vz\lf(x,\vb\r)\r]^{1-q}\,dx \r\}^{1/q}\\
&& \hs\ls \kb\|f\|_\cam\frac{1}{|B|}\lf\{\int_B \lf[\vz\lf(x,\vb\r)\r]^{1-q}\,dx\r\}^{1/q}\\
&& \hs\ls \kb\|f\|_\cam,
\end{eqnarray*}
which, together with \eqref{cs1} and \eqref{eq4.4},
completes the proof of Theorem \ref{t4.2}.
\end{proof}

By Theorem \ref{t4.2} and
an argument similar to that used in the proof of Corollary \ref{cor-gBMO},
we can prove $\sa$
is bounded from $\cmo$ to $\clo$ as follows,
the details being omitted.

\begin{corollary}\label{sa-q1}
Let $\az\in(0,1]$ and
$\vz$ be a growth function satisfying $0<p_0\le p_1\le 1$ and
$\vz\in \aa_p(\rn)$ with $p\in[1,\fz)$.
If $n(\frac{p}{p_0}-1)<\az$,
then, for any $f\in \cmo$, $\sa(f)$ is either
infinite everywhere or finite almost everywhere and, in the latter case,
there exists a positive constant $C$, independent of $f$, such that
$$\|\sa(f)\|_{\clo} \le C\|f\|_{\cmo}.$$
\end{corollary}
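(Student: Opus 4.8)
The plan is to deduce Corollary \ref{sa-q1} from Theorem \ref{t4.2}, exactly mirroring how Corollary \ref{cor-gBMO} is derived from Theorem \ref{t4.1}. The key observation is that, under the hypothesis $0<p_0\le p_1\le 1$, the norm of the Musielak--Orlicz Campanato space $\cmo$ (i.e., the case $q=1$) is comparable to the norm of $\cam$ for a suitable range of $q$; this is supplied by \cite[Theorem 2.7]{ly13}, which gives $\|f\|_\cam\sim\|f\|_\cmo$ whenever $q\in(1,p')$. Since $\vz\in\aa_p(\rn)$ guarantees $p'\in(1,\fz]$, we may fix such a $q\in(1,\min\{p',\fz\})$ with $p\le q'$ (note $q<p'$ is equivalent to $q'>p$), so that Theorem \ref{t4.2} applies with this $q$.

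First I would reduce the statement to a single-ball estimate: it suffices to show that, for all $f\in\cmo$, if $\sa(f)(u)<\fz$ for some $u\in\rn$, then for every ball $B:=B(x_0,r)\st\rn$ containing $u$ one has
$$\frac{1}{\kb}\int_B\lf[\sa(f)(x)-\inf_{\xx\in B}\sa(f)(\xx)\r]\,dx\ls \|f\|_\cmo.$$
The dichotomy ``infinite everywhere or finite almost everywhere'' is inherited from Theorem \ref{t4.2}, since finiteness of $\sa(f)$ at one point is the hypothesis there as well. Next, with $q\in(1,p')$ fixed as above and recalling $\vz(B,\vb)=1$, I would apply the H\"older inequality with exponents $q$ and $q'$, writing $1=[\vz(x,\vb)]^{1/q'}[\vz(x,\vb)]^{-1/q'}$ and using $\int_B\vz(x,\vb)\,dx=\vz(B,\vb)=1$, to obtain
$$\frac{1}{\kb}\int_B\lf[\sa(f)(x)-\inf_{\xx\in B}\sa(f)(\xx)\r]\,dx
\le\frac{1}{\kb}\lf\{\int_B\lf[\sa(f)(x)-\inf_{\xx\in B}\sa(f)(\xx)\r]^q\lf[\vz\lf(x,\vb\r)\r]^{1-q}\,dx\r\}^{1/q}.$$
Then Theorem \ref{t4.2} bounds the right-hand side by $C\|f\|_\cam$, and \cite[Theorem 2.7]{ly13} converts this to $C\|f\|_\cmo$, finishing the proof.

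I do not expect any serious obstacle, since everything reduces to quoting Theorem \ref{t4.2}, the John--Nirenberg-type norm equivalence of \cite{ly13}, and H\"older's inequality; the only point requiring a moment's care is checking that the constraints on $q$ are simultaneously satisfiable. The hypotheses $0<p_0\le p_1\le 1$ and $n(p/p_0-1)<\az$ are precisely those needed for Theorem \ref{t4.2} (with the chosen $q$) and for the equivalence $\|f\|_\cam\sim\|f\|_\cmo$; one must verify that the interval $(1,p')$ is nonempty (which holds since $\vz\in\aa_p(\rn)$ forces $p\in[1,\fz)$, hence $p'\in(1,\fz]$) and contains a $q$ with $q'\ge p$, i.e., $q\le p'$, which is automatic. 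With these bookkeeping checks in place, the argument is a direct transcription of the proof of Corollary \ref{cor-gBMO}, and the details are omitted in the same spirit.
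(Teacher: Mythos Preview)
Your proposal is correct and follows exactly the paper's approach: the paper proves Corollary~\ref{sa-q1} by invoking Theorem~\ref{t4.2} and repeating verbatim the argument of Corollary~\ref{cor-gBMO} (choose $q\in(1,p')$, use the norm equivalence $\|f\|_\cam\sim\|f\|_\cmo$ from \cite[Theorem~2.7]{ly13}, apply H\"older with $\vz(B,\vb)=1$, and then Theorem~\ref{t4.2}). Your bookkeeping check that $q<p'$ forces $q'>p\ge p$ so that the hypothesis $p\le q'$ of Theorem~\ref{t4.2} is met is the only point the paper leaves implicit, and you have it right.
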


Finally, we have the following boundedness of $\ga$ from $\cam$ to $\scam$.

\begin{theorem}\label{t4.3}
Let $\az\in(0,1]$, $q\in (1,\fz)$ and
$\vz$ be a growth function as in Definition \ref{d-vz}
and $\vz\in \aa_p(\rn)$ with $p\in[1,\fz)$.
If $n(\frac{p}{p_0}-1)<\az$, $p\le q'$ and $\lz\in(3+\frac{2\az}{n},\fz)$,
then, for any $f\in \cam$, $\ga(f)$ is either
infinite everywhere or finite almost everywhere and, in the latter case,
there exists a positive constant $C$, independent of $f$, such that
$$\|\ga(f)\|_{\scam} \le C\|f\|_{\cam}.$$
\end{theorem}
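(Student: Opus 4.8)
The plan is to mimic the proof of Theorem \ref{t4.2}, replacing the Lusin area function $\sa$ by $\ga$ and using the already-established pointwise comparison \eqref{eq-glz}, namely $\ga(f)(x)\ls\sa(f)(x)+\sum_{j=1}^{\fz}2^{-j\lz n/2}\saj(f)(x)$, together with Wilson's estimate $\saj(f)(x)\ls 2^{j(3n/2+\az)}\sa(f)(x)$. Thus, once we know that $\ga(f)$ is finite at one point $u$ (which forces, since $\lz>3+2\az/n$, that $\sa(f)$ is finite at $u$ and hence, by Theorem \ref{t4.2}, finite a.e.), it suffices to show that, for every ball $B:=B(x_0,r)\st\rn$ with $B\ni u$,
$$\lf\{\int_{B}\lf[\ga(f)(x)-\inf_{\xx\in B}\ga(f)(\xx)\r]^q\lf[\vz\lf(x,\vb\r)\r]^{1-q}\,dx\r\}^{1/q}\ls \kb\|f\|_\cam.$$

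First I would split $f=[f-f_B]\chi_{2B}+[f-f_B]\chi_{(2B)^\com}+f_B$ and, exactly as in \eqref{cg1} and \eqref{cs1}, decompose $\ga(f)(x)-\inf_{\xx\in B}\ga(f)(\xx)$ into three terms: the local term ${\rm I}_1(x)$ involving $\ga([f-f_B]\chi_{2B})$ with the $t$-integral restricted to $(0,r/2)$; the far term ${\rm I}_2(x)$ involving $[f-f_B]\chi_{(2B)^\com}$ on the same range; and the oscillation term ${\rm I}_3(x)$ controlling the difference of the tails $\int_{r/2}^\fz$ at two points $x,\xx\in B$. For ${\rm I}_1$, since $\vz\in\aa_p(\rn)\st\aa_{q'}(\rn)$ with $p\le q'$, the weight $[\vz(\cdot,\vb)]^{1-q}\in A_q(\rn)$; by the boundedness of $\ga$ on $L^q_w(\rn)$ for $w\in A_q(\rn)$ (which follows from \cite[Theorem 7.2]{w08}, \eqref{eq-glz} and Wilson's estimate, just as in Proposition \ref{pro-vz}, noting $\lz>3+2\az/n$), the argument of \eqref{eq4.2} gives ${\rm I}_1$-bound $\ls\kb\|f\|_\cam$. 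For ${\rm I}_2$, the key geometric observation is that, since the aperture in $\saj$ is $2^j t$, a support/truncation argument no longer kills this term outright; instead I would write $\ga(f_2)$ via \eqref{eq-glz}, and for each $j$ bound the contribution of $[f-f_B]\chi_{(2B)^\com}$ to $\saj$: for $|y-x|<2^jt$, $t<r/2$ and $z\in(2B)^\com$ one still has, when $2^jt$ is small relative to $r$, that the relevant region is empty, while the tail in $t$ is handled by the summable factor $2^{-j\lz n/2}$ against the growth $2^{3jn/2}$ coming from the wider cone (as in \eqref{eq2*3}); since $\lz>3$, summing in $j$ converges and, using Lemma \ref{lem4.1} with $\bz$ chosen in $(\max\{n(p/p_0-1),0\},\az)$, yields the bound $\ls(\kb/|B|)\|f\|_\cam$.

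The main obstacle is the oscillation term ${\rm I}_3$. Here I would follow the estimate of ${\rm J}_1$ and ${\rm J}_2$ in Theorem \ref{t4.2}, but with the extra Poisson-type factor $(t/(t+|x-y|))^{\lz n}$ present in every integrand, and with the cone replaced by all of $\rn$. Splitting the $y$-integral over $\{|x-y|<t\}$ and dyadic annuli $\{2^{\ell-1}t\le|x-y|<2^\ell t\}$ produces, for each $\ell$, a factor $2^{-\ell\lz n/2}$ from the Poisson weight and a factor $2^{3\ell n/2}$ from enlarging the cone to aperture $2^\ell t$; since $\lz>3$ the series $\sum_\ell 2^{-\ell(\lz-3)n/2}$ converges, reducing matters to the $\ell=0$ cone estimates already carried out in \eqref{eq4*2} and \eqref{eq4*1}. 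For the near part ${\rm J}_1$ the uniform boundedness of $\tz\in\ca$ and the computation $\int_{r/2}^\fz t^{-2n-1}\,dt\ls r^{-2n}$ give $\ls(\kb/|B|)\|f\|_\cam$; for the far part ${\rm J}_2$ I would invoke the regularity estimate \eqref{eq4.12} (valid for $\ez\in(\max\{n(p/p_0-1),0\},\az)$, possible since $n(p/p_0-1)<\az$), dominate $|x_0-z|\ls 2^\ell t+|y-x_0+x-z|$, perform the $t$-integral, and then apply Lemma \ref{lem4.1} to the resulting $\int_{B^\com}|f(z)-f_B|\,|x_0-z|^{-n-\ez}\,dz$. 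Assembling the three terms, using $\vz(B,\vb)=1$ and $\vz\in\aa_{q'}(\rn)$ to pass from the $L^1$ average to the weighted $L^q$ quantity as in \eqref{eq4.3}, completes the proof. The only genuinely new bookkeeping compared with Theorem \ref{t4.2} is tracking the $j$- and $\ell$-sums generated by the $g^\ast_\lz$-weight, and this is exactly where the hypothesis $\lz>3+2\az/n$ (rather than merely $\lz>3$) is needed to also control the local $L^q$ bound for $\ga$.
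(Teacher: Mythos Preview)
Your overall strategy is correct and matches the paper's: split at $t\sim r$, bound the short-time piece by the weighted $L^q$ boundedness of $\ga$ plus a far-away term, and control the long-time oscillation by the kernel regularity \eqref{eq4.12} together with Lemma~\ref{lem4.1}. The paper organises the short-time piece slightly differently --- it performs the change of variables $y\mapsto y+x-x_0$ first and then decomposes the $y$-integral into dyadic annuli $2^{k+1}B\setminus 2^kB$ about $x_0$, rather than your aperture decomposition into $\saj$ --- but the two bookkeeping schemes are equivalent.

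There is, however, a genuine imprecision in your accounting of the exponents. For your ${\rm I}_2$, the $j$-th truncated cone only sees $z\in 2^{j+O(1)}B$ (because $t<r/2$), and after using \eqref{eq4.8} the contribution is $\lesssim 2^{3jn/2}\,2^{jn(s-1)}\frac{\kb}{|B|}\|f\|_\cam$ with $s=\max\{1,p/p_0\}$; summing against $2^{-j\lz n/2}$ requires $\lz>2s+1$, not merely $\lz>3$. Similarly, in your ${\rm J}_2$ the kernel bound \eqref{eq4.12} produces $(2^\ell t/|x_0-z|)^{n+\ez}$, so the annulus $2^{\ell-1}t\le|x_0-y|<2^\ell t$ contributes a factor $2^{\ell(3n/2+\ez)}$ rather than $2^{3\ell n/2}$, forcing $\lz>3+2\ez/n$. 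Both conditions are implied by the hypothesis $\lz>3+2\az/n$ (since $s<1+\az/n$ and $\ez<\az$), so your argument goes through --- but your closing remark that $\lz>3+2\az/n$ is needed \emph{only} for the $L^q_w$ bound on $\ga$ is wrong: it is essential in ${\rm I}_2$ and ${\rm J}_2$ as well, exactly as in the paper's estimates of $\sum_k{\rm I}_k$ and ${\rm R}_2$. Also, Lemma~\ref{lem4.1} is not quite the right citation for ${\rm I}_2$; what you actually use is \eqref{eq4.8} from its proof, applied to the finite $k$-sum.
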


\begin{proof}
We only need to show that, for all $f\in\cam$,
if there exists some $u\in \rn$ such that $\ga(f)(u)<\fz$,
then, for any ball $B:=B(x_0, r)\st\rn$, with $x_0\in\rn$
and $r\in(0,\fz)$, and $B\ni u$,
\begin{eqnarray*}
&&\lf\{\int_{B}
 \lf[\ga(f)(x)-\inf_{\xx\in B}\ga(f)(\xx)\r]^q
  \lf[\vz\lf(x,\vb\r)\r]^{1-q}\,dx\r\}^{1/q}\\
&&\hs\ls \kb\|f\|_\cam.
\end{eqnarray*}

To this end, for any $x\in B$, since, for any $\tz\in\ca$, $\int_\rn \tz(x)\,dx=0$
and $$\inf_{\xx\in B}\ga(f)(\xx)\le \ga(f)(u)<\fz,$$ we write
\begin{eqnarray*}
&& \ga(f)(x)-\inf_{\xx\in B}\ga(f)(\xx)\\
&& \hs\le \lf\{\int_{0}^{r}\int_{\rn}\lf(\frac{t}{t+|x-y|}\r)^{\lz n}
       [A_{\az}(f)(y,t)]^2\dytn \r\}^{1/2}\\
&& \hs\hs + \sup_{\xx\in B}\lf|\lf\{\int_r^{\fz}\int_{\rn}\lf(\frac{t}{t+|x-y|}\r)^{\lz n}
       [A_{\az}(f)(y,t)]^2\dytn \r\}^{1/2}\r.\\
&& \hs\hs - \lf.\lf\{\int_r^{\fz}\int_{\rn}\lf(\frac{t}{t+|\xx-y|}\r)^{\lz n}
       [A_{\az}(f)(y,t)]^2\dytn \r\}^{1/2}\r|
  =: {\rm I}(x)+{\rm II}(x).
\end{eqnarray*}

For any $x\in B$, by the fact that, for all $\tz\in\ca$, $\int_\rn \tz(x)\,dx=0$,
we know that
\begin{eqnarray*}
{\rm I}(x)
 && = \lf\{\int_0^r \int_{\rn}\lf(\frac{t}{t+|x-y|}\r)^{\lz n}
       [A_{\az}(f-f_B)(y,t)]^2\dytn \r\}^{1/2}\\
    && = \lf\{\int_0^r \int_{\rn}\lf(\frac{t}{t+|x_0-y|}\r)^{\lz n}
       [A_{\az}(f-f_B)(y-x_0+x,t)]^2\dytn \r\}^{1/2}\\
    && \le \lf\{\int_0^r \int_{2B}\lf(\frac{t}{t+|x_0-y|}\r)^{\lz n}
       [A_{\az}(f-f_B)(y-x_0+x,t)]^2\dytn \r\}^{1/2}\\
    && \hs + \sum_{k=1}^{\fz} \lf\{\int_{0}^{r}\int_{2^{k+1}B\bh 2^kB}
       \cdots\dytn \r\}^{1/2}
    =:{\rm I_0}(x)+\sum_{k=1}^\fz{\rm I}_k(x).
\end{eqnarray*}

For ${\rm I}_0(x)$, we further have
\begin{eqnarray*}
&&{\rm I_0}(x) \\
&&\hs \le \lf\{\int_0^r \int_{2B}\lf(\frac{t}{t+|x_0-y|}\r)^{\lz n}
               [A_{\az}([f-f_B]\chi_{4B})(y-x_0+x,t)]^2\dytn \r\}^{1/2}\\
          &&\hs \hs + \lf\{\int_0^r \int_{2B}\lf(\frac{t}{t+|x_0-y|}\r)^{\lz n}
               [A_{\az}([f-f_B]\chi_{(4B)^\com})(y-x_0+x,t)]^2\dytn \r\}^{1/2}\\
          &&\hs =:{\rm J}_1(x)+{\rm J}_2(x).
\end{eqnarray*}

For any $t\in (0,r)$, $x\in B$, $y\in 2B$ and $z\in (4B)^\com$,
it holds true that
$$|y-x_0+x-z|\geq |x-z|-|x_0-y|>|x_0-z|-|x-x_0|-2r>4r-r-2r>t.$$
From this and the fact that,
for any $\tz\in\ca$, $\supp \tz\in B(0,1)$,
we deduce that $\tz(\frac{y-x_0+x-z}{t})=0$, which further implies that
${\rm J}_2(x) \equiv 0$. By this,
$${\rm J}_1(x)\le \ga([f-f_B]\chi_{4B})(x)\ {\rm for\ all}\ x\in B,$$
the fact that, when $\lz\in(3+\frac{2\az}{n},\fz)$,
$\ga$ is bounded on $L^q_w(\rn)$ with $q\in(1,\fz)$ and $w\in A_q(\rn)$, and
an argument similar to
that used in the estimate for \eqref{eq4.2},
we know that
\begin{eqnarray}\label{eq4.9}
&&\lf\{\int_B [{\rm I_0}(x)]^q \lf[\vz\lf(x,\vb\r)\r]^{1-q}\,dx\r\}^{1/q}\noz\\
&&\hs\ls \kb\|f\|_{\cam}.
\end{eqnarray}

As for ${\rm I}_k(x)$, we have
\begin{eqnarray*}
{\rm I}_k(x)
&& \le \lf\{\int_0^r\int_{2^{k+1}B\bh 2^kB}\lf(\frac{t}{t+|x_0-y|}\r)^{\lz n}\r.\\
&&\hs\times [A_{\az}([f-f_B]\chi_{2^{k+2}B})(y-x_0+x,t)]^2\dytn \Bigg\}^{1/2}\\
&&\hs+ \lf\{\int_{0}^{r}\int_{2^{k+1}B\bh 2^kB}\lf(\frac{t}{t+|x_0-y|}\r)^{\lz n}\r.\\
&&\hs\times[A_{\az}([f-f_B]\chi_{(2^{k+2}B)^\com})(y-x_0+x,t)]^2\dytn \Bigg\}^{1/2}\\
&& =:{\rm H}_{k}(x)+{\rm G}_{k}(x).
\end{eqnarray*}

By using an argument similar to that used in the estimate for ${\rm J}_{2}(x)$,
we have ${\rm G}_{k}(x) \equiv 0$ for all $x\in B$.
Thus, if $\lz>3$, by the fact that $\tz\in\ca$ is uniformly bounded,
we then see that
\begin{eqnarray*}
{\rm I}_k(x)
&& = {\rm H}_{k}(x)\\
&& \le \lf\{\int_0^r\int_{2^{k+1}B\bh 2^kB}\lf(\frac{t}{t+|x_0-y|}\r)^{\lz n}
       \lf[\int_{2^{k+2}B}\frac{1}{t^n}|f(z)-f_B|\,dz\r]^2\dytn \r\}^{1/2}\\
&& \ls \lf\{\int_0^r \lf(\frac{t}{2^k r}\r)^{\lz n}\frac{{(2^kr)^{3n}}}{t^{3n+1}}
 \,dt \r\}^{1/2}
       \lf[\frac{1}{|2^{k+2}B|}\int_{2^{k+2}B} |f(z)-f_B|\,dz\r]\\
&& \sim 2^{-\frac{kn(\lz-3)}{2}}\frac{1}{|2^{k+2}B|}\int_{2^{k+2}B}|f(z)-f_B|\,dz,
\end{eqnarray*}
which, together with \eqref{eq4.8},
$\lz>3+ \frac{2\az}{n}$ and $\az>n(\frac{p}{p_0}-1)$,
further implies that
\begin{eqnarray}\label{eq4.10}
\sum_{k=1}^{\fz}{\rm I}_k
&&\ls \sum_{k=1}^\fz 2^{kn(s-\frac{\lz-1}{2})}\frac{\kb}{|B|}\|f\|_\cam\noz\\
&&\ls \frac{\kb}{|B|}\|f\|_\cam,
\end{eqnarray}
where $s:=\max\{p/p_0, 1\}$.

Combining \eqref{eq4.9} and \eqref{eq4.10}, we know that
\begin{eqnarray}\label{eq4.11}
&&\lf\{\int_B [{\rm I}(x)]^q\lf[\vz\lf(x,\kb^{-1}\r)\r]^{1-q}\,dx\r\}^{1/q}\noz\\
&&\hs\ls \kb\|f\|_{\cam}.
\end{eqnarray}

To estimate ${\rm II}(x)$, for any $x,\,\xx\in B$, from the Minkowski inequality and
the fact that, for any $\tz\in\ca$, $\int_\rn \tz(x)\,dx=0$, we deduce that
\begin{eqnarray*}
&& \lf|\lf\{\int_r^\fz\int_\rn
 \lf(\frac{t}{t+|x-y|}\r)^{\lz n}[A_{\az}(f)(y,t)]^2\dytn \r\}^{1/2}\r.\\
&&\hs \hs -\lf.\lf\{\int_r^\fz\int_{\rn}\lf(\frac{t}{t+|\xx-y|}\r)^{\lz n}
       [A_{\az}(f)(y,t)]^2\dytn \r\}^{1/2}\r|\\
&&\hs = \lf|\lf\{\int_r^\fz\int_\rn
 \lf(\frac{t}{t+|x_0-y|}\r)^{\lz n}[A_{\az}(f-f_B)(y-x_0+x,t)]^2\dytn \r\}^{1/2}\r.\\
&&\hs \hs -\lf.\lf\{\int_r^\fz\int_\rn\lf(\frac{t}{t+|x_0-y|}\r)^{\lz n}
       [A_{\az}(f-f_B)(y-x_0+\xx,t)]^2\dytn \r\}^{1/2}\r|\\
&&\hs \le \lf\{\int_r^\fz \int_\rn\lf(\frac{t}{t+|x_0-y|}\r)^{\lz n}
  \lf[\sup_{\tz\in\ca}\int_B|\tz_{t}(y-x_0+x-z)\r.\r.\\
&&\hs  \hs \lf.-\tz_{t}(y-x_0+\xx-z)||f(z)-f_B|\,dz\Bigg]^2\dytn\r\}^{1/2}\\
&& \hs\hs + \lf\{\int_r^\fz \int_\rn\lf(\frac{t}{t+|x_0-y|}\r)^{\lz n}
  \lf[\sup_{\tz\in\ca}\int_{B^\com}\cdots
\r]^2\dytn\r\}^{1/2}
 =: {\rm R}_1 + {\rm R}_2.
\end{eqnarray*}

For ${\rm R}_1$, since $\tz\in\ca$ is uniformly bounded,
by an argument similar to that used in
the estimate for \eqref{eq4*2},
we have
\begin{eqnarray*}
{\rm R}_1
&& \le \lf\{\int_r^\fz\int_\rn\lf(\frac{t}{t+|x_0-y|}\r)^{\lz n}
  \lf[\int_B \frac{1}{t^n}|f(z)-f_B|\,dz\r]^2\dytn \r\}^{1/2}\\
&& \le \lf\{\int_r^\fz
\lf(\int_{0<|x_0-y|<t}+\sum_{j=1}^\fz \int_{2^{j-1}t\le |x_0-y|<2^jt}\r)
\lf(\frac{t}{t+|x_0-y|}\r)^{\lz n}
  \,\frac{dy\,dt}{t^{3n+1}} \r\}^{1/2}\\
  &&\hs\times\lf[\int_B |f(z)-f_B|\,dz\r]\\
&& \ls \frac{1}{|B|}\int_B |f(z)-f_B|\,dz
  \ls \frac{\kb}{|B|}\|f\|_\cam.
\end{eqnarray*}

For $\rm R_2$,
from \eqref{eq4.12}, we deduce that,
for any $x,\,\xx\in B$, $j\in\zz_+$, $y\in B(x_0,2^jt)$, $t\in(r,\fz)$, $z\in B^\com$
and $\tz\in\ca$, it holds true that
$|x_0-z|<2^jt+|y-x_0+x-z|+r$, $|x_0-z|<2^jt+|y-x_0+\xx-z|+r$ and hence
\begin{eqnarray*}
&& |\tz_t(y-x_0+x-z)-\tz_t(y-x_0+\xx-z)|\\
&& \hs\ls \frac{1}{t^n}\lf(\frac{|x-\xx|}{t}\r)^\az\lf[\lf(\frac{t}{t+|y-x_0+x-z|}\r)^{n+\ez}+
      \lf(\frac{t}{t+|y-x_0+\xx-z|}\r)^{n+\ez}\r]\\
&& \hs\ls\frac{1}{t^n}\lf(\frac{|x-\xx|}{t}\r)^\az\lf(\frac{2^jt}{|x_0-z|}\r)^{n+\ez},
\end{eqnarray*}
which, together with Lemma \ref{lem4.1} and an argument similar to
that used in the estimate for \eqref{eq4*1},
further implies that
\begin{eqnarray*}
{\rm R}_2
&& \ls \lf[\int_r^\fz \int_\rn\lf(\frac{t}{t+|x_0-y|}\r)^{\lz n}\r.\\
&& \hs\times\lf.\lf\{\int_{B^\com}\frac{1}{t^n}\lf(\frac{|x-\xx|}{t}\r)^\az
\lf[\lf(\frac{t}{t+|y-x_0+x-z|}\r)^{n+\ez}\r.\r.\r.\\
&& \hs+ \lf.\lf.\lf. \lf(\frac{t}{t+|y-x_0+\xx-z|}\r)^{n+\ez}\r]|f(z)-f_B|\,dz
\r\}^2\dytn\r]^{1/2}\\
&&\ls \int_{B^\com}|f(z)-f_B|
\lf\{\int_r^\fz \int_\rn\lf(\frac{t}{t+|x_0-y|}\r)^{\lz n}
\frac{1}{t^{2n}}\lf(\frac{r}{t}\r)^{2\az}\r.\\
&& \hs \times\lf.\lf[\lf(\frac{t}{t+|y-x_0+x-z|}\r)^{n+\ez}+
\lf(\frac{t}{t+|y-x_0+\xx-z|}\r)^{n+\ez}\r]^2 \dytn\r\}^{1/2}\!\,dz\\
&& \ls \int_{B^\com}|f(z)-f_B|\sum_{j=1}^\fz
\lf\{\int_r^\fz \int_{2^{j-1}t\le|x_0-y|<2^jt}\lf(\frac{1}{2^j}\r)^{\lz n}
\frac{1}{t^{2n}}\lf(\frac{r}{t}\r)^{2\az}\r.\\
&& \hs \lf.\times\lf(\frac{2^j t}{|x_0-z|}\r)^{2(n+\ez)}\dytn\r\}^{1/2}\,dz\\
&& \hs + \int_{B^\com}|f(z)-f_B|\lf\{\int_r^\fz \int_{|x_0-y|<t}
\frac{1}{t^{2n}}\lf(\frac{r}{t}\r)^{2\az}\lf(\frac{t}{|x_0-z|}\r)^{2(n+\ez)}
\dytn\r\}^{1/2}\,dz\\
&& \ls\lf[\sum_{j=1}^\fz \frac{1}{2^{jn(\lz-3-2\ez/n)/2}}
+ 1\r]\lf\{\int_r^\fz\frac{r^{2\az}}{t^{1+2\az-2\ez}}\,dt\r\}^{1/2}
\int_{B^\com}\frac{|f(z)-f_B|}{|x_0-z|^{n+\ez}}\,dz\\
&& \ls \int_{B^\com}\frac{r^\ez |f(z)-f_B|}{|x_0-z|^{n+\ez}}\,dz
\ls \frac{\kb}{|B|}\|f\|_\cam,
\end{eqnarray*}
which, together with the estimate of ${\rm R}_1$,
further implies that, for all $x\in B$,
$${\rm II}(x)\ls \frac{\kb}{|B|}\|f\|_\cam.$$
Thus, we have
$$\lf\{\int_B [{\rm II}(x)]^q \lf[\vz\lf(x,\vb\r)\r]^{1-q}\,dx\r\}^{1/q}
\ls \kb\|f\|_{\cam},$$
which, combined with \eqref{eq4.11}, completes the proof of Theorem \ref{t4.3}.
\end{proof}

By Theorem \ref{t4.3} and
an argument similar to that used in the proof of Corollary \ref{cor-gBMO},
we can prove $\ga$
is bounded from $\cmo$ to $\clo$ as follows,
the details being omitted.

\begin{corollary}\label{ga-q1}
Let $\az\in(0,1]$ and
$\vz$ be a growth function satisfying $0<p_0\le p_1\le 1$ and
$\vz\in \aa_p(\rn)$ with $p\in[1,\fz)$.
If $n(\frac{p}{p_0}-1)<\az$ and $\lz\in(3+\frac{2\az}{n},\fz)$,
then, for any $f\in \cmo$, $\ga(f)$ is either
infinite everywhere or finite almost everywhere and, in the latter case,
there exists a positive constant $C$, independent of $f$, such that
$$\|\ga(f)\|_{\clo} \le C\|f\|_{\cmo}.$$
\end{corollary}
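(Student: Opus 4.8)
The plan is to follow the proof of Corollary~\ref{cor-gBMO} essentially verbatim, with Theorem~\ref{t4.3} in the role of Theorem~\ref{t4.1}. The first ingredient is that, since $0<p_0\le p_1\le1$, \cite[Theorem~2.7]{ly13} provides, for every $q\in(1,p')$, the norm equivalence $\|f\|_{\mathcal{L}^{\vz,q}(\rn)}\sim\|f\|_{\cmo}$; here the interval $(1,p')$ is nonempty because $p\in[1,\fz)$ forces $p'>1$. I would fix one such $q$ and note that $q<p'$ gives $q'>p$, hence $p\le q'$; combined with the standing hypotheses $n(\frac{p}{p_0}-1)<\az$ and $\lz\in(3+\frac{2\az}{n},\fz)$, all the assumptions of Theorem~\ref{t4.3} then hold for this $q$. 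In particular, the dichotomy ``$\ga(f)$ is infinite everywhere or finite almost everywhere'' for $f\in\cmo$ is inherited at once from that theorem via the embedding $\cmo\hookrightarrow\mathcal{L}^{\vz,q}(\rn)$.

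In the finite-almost-everywhere case it then suffices, exactly as in Corollary~\ref{cor-gBMO}, to show that whenever $u\in\rn$ satisfies $\ga(f)(u)<\fz$ and $B:=B(x_0,r)\st\rn$ contains $u$, one has
$$\frac{1}{\kb}\int_B\lf[\ga(f)(x)-\inf_{\xx\in B}\ga(f)(\xx)\r]\,dx\ls\|f\|_{\cmo}.$$
Here $\inf_{\xx\in B}\ga(f)(\xx)\le\ga(f)(u)<\fz$, so the integrand is finite and nonnegative; moreover $\essinf_{y\in B}\ga(f)(y)\ge\inf_{\xx\in B}\ga(f)(\xx)$, so this estimate also controls the quantity defining $\clo$. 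To get the displayed bound I would apply H\"older's inequality with exponents $q$ and $q'$ and use $\vz(B,\vb)=1$ to pass from the $L^1$ average to the weighted $L^q$ expression
$$\frac{1}{\kb}\lf\{\int_B\lf[\ga(f)(x)-\inf_{\xx\in B}\ga(f)(\xx)\r]^q\lf[\vz\lf(x,\vb\r)\r]^{1-q}\,dx\r\}^{1/q},$$
then bound this by $\|f\|_{\mathcal{L}^{\vz,q}(\rn)}$ through Theorem~\ref{t4.3} (whose proof uses, for $\lz>3+\frac{2\az}{n}$, the boundedness of $\ga$ on $L^q_w(\rn)$ for $w\in A_q(\rn)$), and finally convert back to $\|f\|_{\cmo}$ by the norm equivalence above.

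The argument is essentially bookkeeping, and the only point that needs genuine care is the compatibility of the two parameter constraints, namely that the range $q\in(1,p')$ coming from \cite[Theorem~2.7]{ly13} automatically yields the hypothesis $p\le q'$ required by Theorem~\ref{t4.3} — which it does. Since no harmonic-analysis input beyond Theorem~\ref{t4.3}, \cite[Theorem~2.7]{ly13}, and H\"older's inequality is needed, I do not expect a substantive obstacle: this corollary is just the low-regularity endpoint ($q=1$, $p_0\le p_1\le1$) reformulation of a result already established.
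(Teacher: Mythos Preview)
Your proposal is correct and takes essentially the same approach as the paper, which simply states that the corollary follows ``by Theorem~\ref{t4.3} and an argument similar to that used in the proof of Corollary~\ref{cor-gBMO}'' with details omitted. Your verification that the choice $q\in(1,p')$ automatically yields $p\le q'$ is exactly the compatibility check needed to invoke Theorem~\ref{t4.3}.
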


\bigskip

{\bf Acknowledgement.} The second author is supported by
Grant-in-Aid for Scientific Research (C), No.~24540159,
Japan Society for the Promotion of Science.
The third (corresponding) author is supported by the National
Natural Science Foundation  of China (Grant No.~11171027) and
the Specialized Research Fund for the Doctoral Program of Higher Education
of China (Grant No.~20120003110003).

\bibliographystyle{amsplain}

\end{document}